\theoremstyle{plain}
\newtheorem{theorem}{Theorem}[section]
\newtheorem{maintheorem}{Theorem}
\newtheorem{question}[theorem]{Question}
\newtheorem{proposition}[theorem]{Proposition}
\newtheorem{lemma}[theorem]{Lemma}
\newtheorem{fact}[theorem]{Fact}
\newtheorem{corollary}[theorem]{Corollary}
\newtheorem{claim}[theorem]{Claim}
\theoremstyle{definition}
\newtheorem{definition}[theorem]{Definition}
\newtheorem{example}[theorem]{Example}
\newtheorem{remark}[theorem]{Remark}
\newcommand{\ZZ}{\mathbb{Z}}
\newcommand{\RR}{\mathbb{R}}
\newcommand{\CC}{\mathbb{C}}
\newcommand{\cH}{\mathcal{H}}
\DeclareMathOperator{\Mod}{Mod}
\DeclareMathOperator{\FMod}{FMod}
\DeclareMathOperator{\Stab}{Stab}
\DeclareMathOperator{\Cadm}{\mathscr{C}_{adm}}
\newcommand{\CS}{\mathscr{C}(S)}
\newcommand{\sing}{\underline{\kappa}}
\DeclareMathOperator{\diam}{diam}
\newcommand{\genus}{g}
\DeclareMathOperator{\Arf}{Arf}
\newcommand{\cK}{\mathcal{K}}
\newcommand{\ocK}{\overline{\mathcal{K}}}
\newcommand{\sC}{\mathscr{C}}
\newcommand{\fS}{\mathfrak{S}}
\newcommand{\ofS}{\overline{\mathfrak{S}}}
\newcommand{\cD}{\mathcal{D}}
\newcommand{\cG}{\mathcal{G}}
\newcommand{\markH}{{\mathcal{H}}_{\phi}}
\newcommand{\barmarkH}{\overline{\mathcal{H}_{\phi}}}
\newcommand{\BATS}{\overline{\mathcal{T}_{g,n}}}
\newcommand{\DMcomp}{\overline{\mathcal{M}_{g,n}}}
\newcommand{\CRV}{\mathscr{E}(\barmarkH)}
\newcommand{\CRE}{\mathscr{D}(\barmarkH)}
\newcommand{\scrC}{\mathscr{C}}
\newcommand{\cT}{\mathcal T}
\newcommand{\cM}{\mathcal M}
\newcommand{\cX}{\mathcal X}
\newcommand{\tsh}[1]{\left\{\kern-.7ex\left\{#1\right\}\kern-.7ex\right\}}
\newcommand{\wit}{\operatorname{Wit}}
\title[Admissible curve graphs and the boundary of strata]{Hierarchical hyperbolicity of admissible curve graphs and the boundary of marked strata}
\author{Aaron Calderon and Jacob Russell}
\begin{document}
\begin{abstract}
We show that for any surface of genus at least 3 equipped with any choice of framing, the graph of non-separating curves with winding number 0 with respect to the framing is hierarchically hyperbolic but not Gromov hyperbolic.
We also describe how to build analogues of the curve graph for marked strata of abelian differentials that capture the combinatorics of their boundaries, analogous to how the curve graph captures the combinatorics of the augmented Teichm{\"u}ller space. These curve graph analogues are also shown to be hierarchically, but not Gromov, hyperbolic.
\end{abstract}

\maketitle

\thispagestyle{empty}
\vspace{-2em}

\section{Introduction}

The moduli space $\Omega\cM_g$ of genus $g$ Abelian differentials forms a bundle over the usual moduli space $\cM_g$ of genus $g$ Riemann surfaces.
This bundle decomposes into {\em strata},
subvarieties which parametrize differentials with a given number and order of zeros and which are the ambient theatre for Teichm{\"u}ller dynamics.
The overall structure of strata is still poorly understood, and recent work has been largely guided by the following:

\begin{question}\label{Q:stratavsMg}
How similar are strata and $\cM_g$?
\end{question}

There has been a great deal of success constructing compactifications of strata akin to the Deligne--Mumford compactification of $\cM_g$ \cite{EMZ,IVC, multiscale}. The structure of these boundaries can then be used to compute constants of dynamical interest \cite{EMZ}, perform intersection theory on strata
\cite{CMSZ_int}, and compute their Euler characteristics \cite{CMZEuler}, among many other things.

Another version of Question \ref{Q:stratavsMg} deals with their fundamental groups.
Recall that $\cM_g$ is an (orbifold) $K(\pi, 1)$ for the usual mapping class group $\Mod(S)$, the group of homeomorphisms of the surface up to homotopy.
By analogy, Kontsevich predicted that each connected component of a stratum should be a $K(\pi, 1)$ for ``some mapping class group'' \cite{KZ_strings}.
In \cite{CS}, the first author and Salter showed that the fundamental groups of strata are closely related to {\em framed mapping class groups} $\FMod(S,\phi)$, the stabilizers inside $\Mod(S)$ of trivializations $\phi: TS \cong S \times \RR^2$ (see \S\ref{sec:background} for a formal definition).
Apisa, Bainbridge, and Wang subsequently showed that certain strata of twisted 1-forms are $K(\pi,1)$'s for framed mapping class groups \cite{ABW}. A group-theoretic analogue of Question \ref{Q:stratavsMg} is thus:

\begin{question}\label{Q:FModvsMod}
How similar are $\FMod(S,\phi)$ and $\Mod(S)$?
\end{question}

\subsection{Curve graphs and strata}
This paper initiates the study of Questions \ref{Q:stratavsMg} and \ref{Q:FModvsMod} from the coarse-geometric perspective by analyzing the geometry of certain curve graphs.

The classical curve graph $\CS$ has a vertex for each isotopy class of essential simple closed curve on an orientable surface and an edge when two curves can be realized disjointly \cite{Harvey_CS}.
In addition to this topological interpretation, this graph also plays the role of (the 1-skeleton of) a Tits building for Teichm{\"u}ller space $\cT_g$, recording the incidences of top-dimensional boundary strata of the {\em augmented Teichm{\"u}ller space}, a certain bordification of $\cT_g$ that ``lifts'' the Deligne-- Mumford compactification of $\cM_g$ (see \S\ref{subsec:C(S) as nerve}).

Masur and Minsky famously proved that $\CS$ is Gromov hyperbolic \cite{MM1}. This marquee result has far-reaching implications for the coarse geometry of the mapping class group \cite{Ivanov_curve_graph,MM2}, the geometry of Teichm\"uller space \cite{MM1, Rafi_short_curves}, and the structure of hyperbolic 3-manifolds \cite{Minsky_ELC1,BCM_ELC2}.
More generally, the geometry of curve graphs has proven useful in a variety of settings; examples of this paradigm include relationships between the pants graph/the Weil--Petersson metric on $\cT_g$ \cite{Brock_WP_volumes,BF_WP_rank},
the Torelli complex and separating curve graph/the Torelli subgroup and the Johnson kernel \cite{FI_Torelli,BM_johnson_kernel}, and the disk graph/the handlebody group and Heegaard splittings \cite{Hensel_handlebody, MS_disk_graph}.
\medskip

As a first step towards Question \ref{Q:FModvsMod}, we study a topological analogue of $\CS$ that takes the framing into account.
Any framing $\phi: TS \cong S \times \RR^2$ can be used to measure the winding number of a smooth, oriented curve in $S$ by lifting the curve to $TS$ via its tangent vector, projecting to the second coordinate, then measuring the winding number of the image about $0 \in \RR^2$.
A simple closed curve on $S$ is {\em admissible} for $\phi$ if it is nonseparating and has zero winding number, and the {\em admissible curve graph} $\Cadm(S, \phi)$ is the subgraph of $\CS$ spanned by admissible curves.

The framed mapping class group $\FMod(S,\phi)$ preserves the winding number of every curve, hence acts on $\CS$ with infinitely many orbits of vertices.
In contrast, $\FMod(S,\phi)$ acts on $\Cadm(S, \phi)$ with finitely many orbits of vertices and edges (Proposition \ref{prop:transitiveadm}), indicating that the admissible curve graph is better adapted to study $\FMod(S,\phi)$.

Our first main result is that the admissible curve graph is {\em not} Gromov hyperbolic, but does possess a generalized notion of hyperbolicity.

\begin{maintheorem}\label{mainthm:CadmHHS}
	For any surface $S = S_{g,n}$ of genus $g \ge 3$ and any framing $\phi$ of $S$, the admissible curve graph $\Cadm(S, \phi)$ is hierarchically hyperbolic (but not Gromov hyperbolic).
\end{maintheorem}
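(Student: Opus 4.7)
The proof splits into two independent parts: ruling out Gromov hyperbolicity by constructing a quasi-isometrically embedded $\ZZ^2$, and establishing the HHS structure via the combinatorial HHS machinery of Behrstock--Hagen--Martin--Sisto.

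For the failure of Gromov hyperbolicity, since $g \geq 3$ there exist two disjoint essential subsurfaces $Y_1, Y_2 \subset S$, each of genus at least one. The results on framed mapping class groups in \cite{CS}, combined with a Thurston-style construction, should supply pseudo-Anosov elements $f_i \in \FMod(Y_i, \phi|_{Y_i})$; extended by the identity, the $f_i$ commute in $\FMod(S,\phi)$ and generate a $\ZZ^2$ subgroup. Choosing an admissible curve $\gamma$ that essentially intersects both $Y_1$ and $Y_2$, the orbit map $(m,n) \mapsto f_1^m f_2^n(\gamma)$ should be a quasi-isometric embedding $\ZZ^2 \hookrightarrow \Cadm(S,\phi)$: subsurface projection $\pi_{Y_i}$ to $\scrC(Y_i)$ is coarsely Lipschitz on $\Cadm(S,\phi)$, and by Masur--Minsky the projection of $f_1^m f_2^n(\gamma)$ to $\scrC(Y_i)$ grows linearly in the $i$-th coordinate. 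A $\ZZ^2$-quasi-flat is incompatible with Gromov hyperbolicity.

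For hierarchical hyperbolicity, the plan is to apply the combinatorial HHS criterion. The domains will be isotopy classes of essential connected subsurfaces $Y \subseteq S$, with the hyperbolic space attached to $Y$ being either the standard curve graph $\scrC(Y)$, when $Y$ is too small to record the framing, or an inductively-defined admissible curve graph $\Cadm(Y,\phi|_Y)$ when $Y$ can detect it. Orthogonality corresponds to disjointness of subsurfaces, nesting to containment, and the auxiliary ``$W$''-graph is built from admissible pants decompositions together with elementary moves compatible with the framing. Then one checks (i) that this $W$-graph is quasi-isometric to $\Cadm(S,\phi)$, via the transitivity of the $\FMod(S,\phi)$-action from Proposition \ref{prop:transitiveadm}; (ii) that subsurface projections are well-defined and coarsely Lipschitz on admissible multicurves; and (iii) the consistency, bounded geodesic image, and uniqueness axioms, the first two largely inherited from Masur--Minsky theory applied domain-by-domain.

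The main obstacle is expected to be the uniqueness axiom: that an admissible pants decomposition is coarsely determined by its tuple of subsurface projections, uniformly in the topology of $S$. In the unframed setting this is a classical consequence of Masur--Minsky theory, but in the framed setting one must simultaneously control winding numbers in every subsurface that detects them. The key technical input will be a change-of-coordinates principle for framings: given two admissible multicurves with matching projections across all subsurfaces, one must realize the difference as a product of small-support elements of $\FMod(S,\phi)$, each of which moves the configuration by a bounded amount in $\Cadm(S,\phi)$. This naturally proceeds by induction on the complexity of the subsurface, with the base cases handled by low-genus arguments where the framing reduces to a finite invariant such as an $\Arf$ invariant.
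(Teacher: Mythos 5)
Both halves of your proposal have genuine gaps.

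\textbf{Non-hyperbolicity.} Your quasi-flat collapses. If $Y_1,Y_2$ are disjoint subsurfaces each of genus at least one, then neither is a witness for $\Cadm(S,\phi)$: by Lemma \ref{lem:genusadm} each $Y_i$ contains an admissible curve $a_i$, and $a_1$ is disjoint from $Y_2$ (and vice versa). Consequently the subsurface projection to $\scrC(Y_i)$ is \emph{not} coarsely Lipschitz on $\Cadm(S,\phi)$ --- it is undefined at $a_{3-i}$ --- and the orbit $(m,n)\mapsto f_1^mf_2^n(\gamma)$ is in fact \emph{bounded}: with $D=d_{\Cadm}(\gamma,a_1)$, the curve $f_1^mf_2^n(a_1)\subset Y_1$ is admissible and disjoint from $a_2$, which gives $d_{\Cadm}(\gamma, f_1^mf_2^n(\gamma))\le 2D+2$ for all $m,n$. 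The actual content of the non-hyperbolicity statement is the existence of a pair of disjoint \emph{witnesses}, which by Lemma \ref{lem:admwitnesses} must both be genus-$0$ subsurfaces containing no admissible curves; producing such a pair is a nontrivial arithmetic construction with winding numbers on a separating multicurve, transported to $\phi$ via the classification of framings (Theorem \ref{thm:classframed}), after which one invokes Vokes's criterion (Corollary \ref{cor:hyp_iff_no_disjoint}).

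\textbf{Hierarchical hyperbolicity.} Two problems. First, you propose attaching $\Cadm(Y,\phi|_Y)$ as the hyperbolic space for domains $Y$ that ``detect the framing''; but the spaces in an HHS structure must be uniformly Gromov hyperbolic, and the theorem you are proving asserts that such graphs are \emph{not} hyperbolic, so this choice of data cannot satisfy the axioms. In the framework the paper uses, the domains are only the witnesses $\fS=\wit(\Cadm(S,\phi))$ and the associated spaces are the ordinary curve graphs $\scrC(W)$. Second, and more seriously, you defer the quasi-isometry between your auxiliary graph and $\Cadm(S,\phi)$ to the transitivity statement of Proposition \ref{prop:transitiveadm}. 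That is the crux of the entire proof: $\FMod(S,\phi)$ has infinitely many orbits of witnesses and of the relevant curve configurations, so the cobounded-action/change-of-coordinates argument underlying Vokes's quasi-isometry does not apply, and the ``change-of-coordinates principle for framings'' you invoke in your final paragraph does not follow from the stated transitivity results. The paper devotes all of Section \ref{sec:Cadm_qi_model} to building this quasi-isometry by hand, routing through the genus-separating curve graph $\cG$ (on which the full mapping class group does act) and performing delicate surgery arguments; some substitute for that work is unavoidable.
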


\emph{Hierarchical hyperbolicity} was introduced by Behrstock, Hagen, and Sisto to unify similarities between the coarse geometry of mapping class groups, Teichm\"uller spaces, and right-angled Artin groups \cite{BHS_HHSI}. Briefly, this framework allows one to understand  the geometry of a space by projecting it onto a collection of Gromov hyperbolic spaces.
The presence of ``orthogonal'' projections leads to quasi-isometrically embedded flats, hence a failure of Gromov hyperbolicity.
\medskip

We can also define a geometric analogue of $\CS$ that captures the intersection pattern of the boundary of a marked stratum.
More precisely, since holomorphic differentials are determined up to scaling by the order and position of their zeros, any stratum component $\cH \subset \Omega^1\cM_g$ is an (orbifold) $\CC^*$-bundle over a subvariety of $\cM_{g,n}$, the moduli space of genus $g$ Riemann surfaces with $n$ marked points.
Let us conflate $\cH$ with this subvariety.

Take any non-hyperelliptic stratum component $\cH \subset \cM_{g,n}$, let $\cH_{\phi}$ be any component of the preimage of $\cH$ in $\cT_{g,n}$, and consider its closure $\barmarkH$ in the augmented Teichm{\"u}ller space $\BATS$.
Define a graph $\mathscr{C}(\barmarkH)$ whose vertices are those multicurves $\gamma$ such that $\barmarkH \cap \cT_{g,n}(\gamma) \neq \emptyset$, where $\cT_{g,n}(\gamma)$ is the boundary stratum of $\BATS$ in which $\gamma$ is pinched, and whose edges are given by inclusion.
The intricate structure of the boundary of $\barmarkH$ means there are other natural ways to define this graph (see Sections \ref{subsec:multiscale} and \ref{subsec:divisorial}), but they all turn out to be quasi-isometric to $\mathscr{C}(\barmarkH)$.

The geometry of $\mathscr{C}(\barmarkH)$ is closely linked to that of $\Cadm(S,\phi)$, and using Theorem \ref{mainthm:CadmHHS} plus structural results about compactifications of strata \cite{IVC, multiscale}, we prove:

\begin{maintheorem}\label{mainthm:bdrycx}
	For any non-hyperelliptic stratum component $\cH \subset \Omega^1\cM_g$ with $g \ge 5$, the graph $\mathscr{C}(\barmarkH)$ is hierarchically hyperbolic (but not Gromov hyperbolic).
\end{maintheorem}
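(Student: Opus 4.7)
The plan is to reduce Theorem \ref{mainthm:bdrycx} to Theorem \ref{mainthm:CadmHHS} by establishing a quasi-isometry between $\mathscr{C}(\barmarkH)$ and $\Cadm(S, \phi)$ for a suitable framing $\phi$ on the punctured surface $S = S_{g,n}$. Once this quasi-isometry is in place, hierarchical hyperbolicity (and the failure of Gromov hyperbolicity, coming from quasi-isometrically embedded flats in the HHS structure) transfers immediately from Theorem \ref{mainthm:CadmHHS}.

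The first step is to attach to $\cH$ a framing $\phi$. By the work of Calderon--Salter, for $g \ge 5$ the non-hyperelliptic components of strata of abelian differentials are classified by mapping class group orbits of framings of the punctured surface $S_{g,n}$ (where $n$ is the number of zeros counted with multiplicity), and $\FMod(S, \phi)$ is precisely the subgroup of $\Mod(S)$ stabilizing the corresponding component $\markH$ in $\cT_{g,n}$. This is the source of both the hypothesis $g \ge 5$ and the non-hyperellipticity hypothesis.

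The second step, which is the heart of the argument, is to characterize the multicurves $\gamma$ with $\barmarkH \cap \cT_{g,n}(\gamma) \neq \emptyset$. Using either the incidence variety compactification \cite{IVC} or the multiscale compactification \cite{multiscale}, I would show that these are precisely the nonseparating multicurves each of whose components is admissible (winding number $0$) for $\phi$, possibly subject to mild combinatorial conditions coming from the admissibility of the entire level graph. The ``only if'' direction should follow from the fact that a pinched node on an abelian differential imposes a residue condition, which in turn forces admissibility of the vanishing cycle. The ``if'' direction requires constructing explicit smoothings realizing each admissible multicurve as a boundary pinching of $\barmarkH$, which is where the structural results of \cite{IVC, multiscale} do most of the heavy lifting.

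The final step compares geometries: the vertices of $\mathscr{C}(\barmarkH)$ are admissible multicurves ordered by inclusion, while those of $\Cadm(S,\phi)$ are single admissible curves with edges for disjointness. A standard nerve-versus-$1$-skeleton comparison---sending each multicurve to any one of its component curves, and conversely viewing a single admissible curve as a one-element multicurve---yields coarsely Lipschitz maps in both directions whose compositions are uniformly close to the identity. The main obstacle is Step 2: one must verify not only that every boundary multicurve is admissible, but also that enough admissible multicurves actually arise as boundary pinchings so that the nerve comparison becomes a quasi-isometry rather than merely a Lipschitz map. The paper's remark that several natural candidate definitions of $\mathscr{C}(\barmarkH)$ (Sections \ref{subsec:multiscale} and \ref{subsec:divisorial}) turn out to be mutually quasi-isometric suggests this robustness is indeed available, but must be unpacked from the compactification literature.
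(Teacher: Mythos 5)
Your Step 2 contains a genuine error that breaks the whole strategy. It is \emph{not} true that the multicurves pinched in $\partial\barmarkH$ are precisely the admissible multicurves. By Theorem \ref{thm:bdry are adm + level}, a pinched multicurve is the union of an admissible multicurve with a disjoint $N$-level multicurve, and the curves of a level splitting have strictly \emph{negative} winding number (they correspond to vertical nodes, with $\phi(b) = -\kappa$ where $\kappa \geq 1$ is the enhancement); only horizontal nodes, i.e.\ degenerating cylinders, yield admissible vanishing cycles. Figure \ref{fig:hompinch} gives an explicit boundary stratum of $\barmarkH$ in which two homologous, non-admissible curves are pinched. So your heuristic that ``the residue condition forces admissibility of the vanishing cycle'' is false, and the vertex set of $\mathscr{C}(\barmarkH)$ is strictly larger than the set of admissible multicurves.

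As a consequence, the intended conclusion of your reduction is also false in general: $\mathscr{C}(\barmarkH)$ is \emph{not} quasi-isometric to $\Cadm(S,\phi)$. The inclusion $\Cadm(S,\phi)\hookrightarrow\mathscr{C}(\barmarkH)$ is a cone-off, not a quasi-isometry: the witness set $\ofS$ of $\CRV$ (a graph quasi-isometric to $\mathscr{C}(\barmarkH)$, by Corollary \ref{cor:variants_qi}) is obtained from $\wit(\Cadm(S,\phi))$ by \emph{deleting} every witness that is disjoint from some divisorial $2$-level multicurve, so the two spaces have different hierarchically hyperbolic models ($\ocK$ versus $\cK$) and, in particular, Gromov hyperbolicity must be re-disproved by a separate disjoint-witness construction that also excludes $2$-level splittings (not just admissible curves) from the complementary genus-$0$ pieces. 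The actual proof therefore cannot stop at Theorem \ref{mainthm:CadmHHS}: it requires identifying the divisorial $2$-level multicurves, proving transitivity of $\FMod(S,\phi)$ on them up to Arf-invariant data (Proposition \ref{prop:2 level trans}), showing each such multicurve can be pinched together with any admissible curve in its top level (Proposition \ref{prop:realize_adm}), and then establishing that the cone-off $\ocK$ of the model matches the cone-off $\CRV$ of $\Cadm(S,\phi)$ (Theorem \ref{thm:bdrycx model qi}). None of this is recoverable from the nerve-versus-$1$-skeleton comparison you propose.
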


\begin{remark}
As shown in \cite[Corollary 1.2]{CS}, admissible curves are exactly the core curves of cylinders on surfaces in $\markH$.
One can also construct a partial bordification of $\markH$ in which only cylinders are allowed to degenerate; the combinatorics of how this space meets $\partial \BATS$ then correspond to $\Cadm(S, \phi)$.
Thus Theorem \ref{mainthm:CadmHHS} can also be interpreted as a statement about the coarse geometry of $\markH$.
\end{remark}

\begin{remark}
Our restriction to non-hyperelliptic components is because the hyperelliptic ones do not exhibit new phenomena.
Indeed, hyperelliptic stratum components are essentially strata of quadratic differentials on $\mathbb{CP}^1$, which are in turn parametrized by their poles and zeros. 
Thus we can understand compactifications of hyperelliptic stratum components entirely in terms of the Deligne--Mumford compactification of $\cM_{0,n}$.
\end{remark}

\begin{remark}
The restriction to $g \ge 3$ in Theorem \ref{mainthm:CadmHHS} is because for $g=1,2$ the admissible curve graph is not necessarily connected.
The restriction to $g \ge 5$ in Theorem \ref{mainthm:bdrycx} comes from the fact that the main theorem of \cite{CS} relating $\pi_1(\cH)$ and $\FMod(S,\phi)$ only applies for $g \ge 5$.
In Section \ref{sec:bdrycx} we give a (slightly circuitous) definition of $\mathscr{C}(\barmarkH)$ that agrees with the one given above for $g \ge 5$ and for which Theorem \ref{mainthm:bdrycx} holds in genus 3 and 4.
In particular, all of the proofs in this paper hold for $g \ge 3$.
\end{remark}

Curve graph techniques have been used successfully to study certain $\mathsf{GL}_2\RR$--invariant subvarieties of $\Omega^1\cM_g$:
\cite{Tang} proved that Veech groups are undistorted in $\Mod(S)$, \cite{RScovers} proved a similar result for covering constructions, and \cite{AHW_geo} used curve graphs to study the geometry of totally geodesic subvarieties of Teichm{\"u}ller space.
It is our hope that the tools developed in this paper will yield insights into both the intrinsic and extrinsic geometry of framed mapping class groups and strata. For example, we ask:

\begin{question}
Is $\FMod(S,\phi)$ distorted in $\Mod(S)$? Are strata distorted in $\cM_{g,n}$?
\end{question}

\subsection{Outline of proof and paper}
To prove Theorems \ref{mainthm:CadmHHS} and \ref{mainthm:bdrycx}, we need to exhibit projections from $\Cadm(S,\phi)$ and $\mathscr{C}(\barmarkH)$ to Gromov hyperbolic spaces.
In both settings, we use Masur and Minsky's subsurface projection maps to the curve graphs of \emph{witnesses} --- subsurfaces of $S$ that intersect every admissible curve. 
This approach was inspired by work of Vokes, who showed that a wide variety of graphs of curves are hierarchically hyperbolic using their subsurface projection maps to witnesses \cite{Vokes_HHS}. Vokes first uses the set of witnesses to build a hierarchically hyperbolic ``model graph'' $\cK$, then shows that if the graph of curves admits a cobounded action of $\Mod(S)$ then it is quasi-isometric to $\cK$.

To prove Theorem \ref{mainthm:CadmHHS}, we construct a hierarchically hyperbolic model $\cK$ for $\Cadm(S, \phi)$ {\`a} la Vokes (Section \ref{sec:models}). However, we cannot employ her quasi-isometry as $\Cadm(S, \phi)$ does not admit an action by all of $\Mod(S)$ and the action of $\FMod(S, \phi)$ on $\cK$ is not sufficiently cofinite to adapt her argument. Instead, we  construct a novel quasi-isometry $\cK \to \Cadm(S, \phi)$ via the graph $\cG$ of genus separating curves (Section \ref{sec:Cadm_qi_model}). 
The graph $\cG$ can be quasi-isometrically realized as a ``blow-up'' of $\cK$, while $\Cadm(S,\phi)$ is quasi-isometric to a ``cone-off'' of $\cG$.
To build the map $\cK \to \Cadm(S, \phi)$, we show that the blown-up subsets from $\cK \to \cG$ coarsely match the coned-off subsets from $\cG \to \Cadm(S,\phi)$.
This step requires some fairly delicate computations with curves on surfaces.

Theorem \ref{mainthm:bdrycx} follows by constructing a quasi-isometric model for $\mathscr{C}(\barmarkH)$ entirely in terms of framing data.
This requires unpacking some of the finer structure of the boundary, as developed in \cite{multiscale}, and giving topological interpretations to many of the objects involved. These steps are accomplished in Section \ref{sec:bdrycx}.
In this section, we also build a trio of graphs whose definitions interpolate between the structure of $\partial \barmarkH$ and framing data.

In the final Section \ref{sec:transgeom}, we show that the three graphs from Section \ref{sec:bdrycx} are all quasi-isometric, and that they are quasi-isometric to a Vokes model graph $\overline{\cK}$.
Again, there is not sufficient transitivity to apply Vokes's methods, and the construction of a quasi-isometry is quite subtle.
The graph $\overline{\cK}$ is an $\FMod(S,\phi)$--equivariant cone-off of the model $\cK$ for $\Cadm(S, \phi)$, and
the inclusion $\Cadm(S, \phi) \hookrightarrow \mathscr{C}(\barmarkH)$ is also an equivariant cone-off. As in the case of Theorem \ref{mainthm:CadmHHS}, the main difficulty is then showing that these two cone-offs coarsely match.

A common theme running throughout this paper is that if one understands the $\FMod(S, \phi)$ action on configurations of curves and subsurfaces well enough, then many surface-topological arguments can be adapted to the framed setting with a little extra care and effort.
As such, we prove a number of transitivity results (Propositions \ref{prop:transitiveadm}, \ref{prop:2 level trans}, and \ref{prop:realize_adm}) for the $\FMod(S, \phi)$ action that may be of broader interest.

\subsection*{Acknowledgments}
We thank Martin M{\"o}ller and Frederik Benirschke for helping us to understand the structure of the boundary of strata. We also thank Dan Margalit for helpful comments on an early draft of this paper.

This paper grew out of conversations begun in Spring 2022 when JR was attending the trimester program ``Groups acting on fractals, hyperbolicity and self-similarity.'' We gratefully acknowledge financial support from the Institut Henri Poincar{\'e} (UAR 839 CNRS-Sorbonne Universit{\'e}) and LabEx CARMIN (ANR-10-LABX-59-01),
and thank them for their hospitality.
AC acknowledges support from NSF grants DMS-2005328 and DMS-2202703.
JR acknowledges support from NSF grant DMS-2103191.

\section{Surfaces, curves, and framings}\label{sec:background}
Let us first recall some basic surface-topological notions and set our notation for the rest of the paper.
Let $S = S_{g,n}$ denote an orientable surface with genus $g$ and $n$ punctures.
The \emph{complexity} of $S=S_{g,n}$ is $\xi(S) = 3g-3 +n$.  By a \emph{curve} on $S$ we mean an isotopy class of an essential (i.e., non-nulhomotopic), non-peripheral (i.e., not homotopic to a puncture), simple closed curve on $S$.
An \emph{arc} on $S$ is an isotopy class of essential, non-peripheral simple arcs running between the punctures.
Curves and arcs are unoriented unless we say otherwise.
By a \emph{subsurface} of $S$, we mean an isotopy class of an essential, non-peripheral, (relatively) closed subsurface of $S$. For two subsurfaces $U$ and $V$, we say $U\subseteq V$ if $U$ and $V$ can be realized such that $U$ is contained in $V$.
We say two curves and/or subsurfaces are \emph{disjoint} if their isotopy classes can be realized disjointly. Otherwise, we say they \emph{intersect}. A \emph{multicurve} on $S$ is a collection of distinct, disjoint curves on $S$.
Throughout the paper, we use lowercase Latin letters to refer to curves, Greek letters to multicurves and arcs, and uppercase letters to subsurfaces.

Given two multicurves $\alpha$, $\beta$ on $S$, we let $i(\alpha,\beta)$ denote their \emph{geometric intersection number}. If $\alpha$ and $\beta$ are oriented curves, then $\langle \alpha, \beta \rangle$ will denote their \emph{algebraic intersection number}.
If a multicurve $\alpha$ intersects a subsurface $W \subseteq S$, then $\alpha \cap W$ is the isotopy class (relative to $\partial W$) of curves and arcs obtained by taking the intersection of $W$ with a representative for $\alpha$ that realizes $i(\alpha, \partial W)$. Two arcs $\alpha_1,\alpha_2$ on the subsurface $W$ are \emph{parallel} if they are isotopic by isotopies fixing $\partial W$ setwise but not pointwise. 

If $\alpha$ is a multicurve on $S$, then $S \setminus \alpha$ will denote the closed subsurface obtained by removing a small open neighborhood of each curve in $\alpha$ from $S$. Similarly, if $W$ is a subsurface of $S$, then $S \setminus W$ is the closed subsurface obtained by removing a small open neighborhood of $W$ from $S$. We denote the genus of a subsurface $W \subseteq S$ by $\genus(W)$. 

The mapping class group, $\Mod(S)$, is the group of homeomorphisms of $S$ that fix each of its punctures, modulo isotopy.
The mapping class group is generated by {\em Dehn twists}: for any simple closed curve $c$, let $T_c$ denote the homeomorphism obtained by cutting open $S$ along $c$, twisting one of the boundary components of $S \setminus c$ once to the left, and then regluing.

\subsection{Framings and winding numbers}
A {\em framing} of a surface $S$ is a trivialization of its tangent bundle $\phi: TS \xrightarrow{\sim} S \times \mathbb{R}^2$. For surfaces of genus not equal to 1, the existence of a framing requires $S$ to have punctures and/or boundary.
Throughout this paper we will think of $S$ as having punctures.

We are interested in the set of framings up to isotopy; these were called ``absolute framings'' in \cite{CS}.
Isotopy classes of framings can be described by the discrete invariant of a ``winding number function'' as follows.
Given any $C^1$ immersed curve $\gamma: [0,1] \to S$, the tangent framing $(\gamma, \gamma')$ gives a curve in $TS \cong S \times \mathbb{R}^2$.
Projecting into the second factor gives a loop in $\mathbb{R}^2 \setminus \{0\}$ and so one can measure the {\em winding number} $\phi(\gamma)$ of $\gamma'$ about $0$.
This number is an invariant of the isotopy class of framing as well as the isotopy class of $\gamma$ (though not its homotopy class), and so to every framing $\phi$ we have an associated winding number function of the same name
\[\phi: \mathcal{S} \to \mathbb{Z},\]
where $\mathcal{S}$ denotes the set of isotopy classes of oriented simple closed curves.
It is not hard to show that the function $\phi$ is actually a complete invariant of the isotopy class of the framing \cite[Proposition 2.4]{RW}, and so for the remainder of the paper we will conflate a(n isotopy class of) framing and its associated winding number function.

\begin{remark}
In a previous version of this paper, we considered surfaces with boundary where the framing was allowed to vary on the boundary. This is equivalent to the absolute framings we now consider; see \cite[Section 6.2]{CS}.
\end{remark}

Winding number functions have two very important properties, which were first elucidated by Humphries and Johnson \cite{HJ}. As a consequence, a framing is completely determined (up to isotopy) by its values on a basis for homology. 

\begin{lemma}[Humphries--Johnson]\label{lemma:HJ}
	Any winding number function $\phi$ associated to a framing satisfies the following properties.
	\begin{enumerate}
		\item \label{item:TL} (Twist-linearity) Let $a, b \subset S$ be oriented simple closed curves. Then
		\[
		\phi(T_a(b)) = \phi(b) + \langle b,a \rangle \phi(a),
		\]
		where $\langle \cdot, \cdot \rangle: H_1(S;\ZZ) \times H_1(S; \ZZ) \to \ZZ$ denotes the algebraic intersection pairing.
		\item \label{item:HC} (Homological coherence) Let $U \subset S$ be a subsurface and let $c_1, \dots, c_k$ denote its boundary components and the peripheral loops about its punctures, oriented such that $U$ lies to the left of each $c_i$. Then
		\[
		\sum_{i = 1}^k \phi(c_i) = \chi(U),
		\]
		where $\chi(U)$ denotes the Euler characteristic.
	\end{enumerate}
\end{lemma}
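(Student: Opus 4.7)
The plan is to work with the concrete realization of $\phi$ as winding with respect to a reference vector field: the framing produces a nowhere-vanishing vector field $X$ on $S$ (the inverse image of the first basis section), and $\phi(\gamma)$ is the winding number of the tangent $\gamma'$ relative to $X$ along $\gamma$. With this in hand, both claims reduce to classical pictures.

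For (1), I would place $a$ and $b$ in minimal transverse position and fix a small annular neighborhood $N \cong S^1 \times [-1,1]$ of $a$ on which $T_a$ is supported. Choosing smooth representatives so that $T_a(b)$ agrees with $b$ outside $N$, the winding contributions outside $N$ match exactly. Inside $N$, the $j$-th arc $\alpha_j$ of $b \cap N$ gets replaced by an arc $\alpha_j'$ that, by the definition of $T_a$, is isotopic rel endpoints to the concatenation of $\alpha_j$ with a copy of $a$ traversed with sign $\epsilon_j \in \{\pm 1\}$, the algebraic sign of the $j$-th intersection. A short local computation in $N$, using additivity of winding under smooth concatenation, then shows the winding of $\alpha_j'$ exceeds that of $\alpha_j$ by $\epsilon_j \phi(a)$. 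Summing over all $i(a,b)$ intersections yields $\phi(T_a(b)) - \phi(b) = \langle b, a\rangle \phi(a)$.

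For (2), I would apply Poincar\'e--Hopf after capping off. Replace each puncture of $S$ inside $U$ with a boundary component and cap each boundary $c_i$ by a disk $D_i$ to form a closed surface $\hat U$ with $\chi(\hat U) = \chi(U) + k$. Extend $X|_U$ to a vector field $\hat X$ on $\hat U$ with exactly one isolated zero $p_i$ per cap; then $\sum_i \mathrm{ind}(p_i) = \chi(\hat U)$. To compute $\mathrm{ind}(p_i)$, pick a local trivialization $\hat\phi_i$ of $T\hat U|_{D_i}$ (which exists since $D_i$ is contractible). Since $c_i$ (oriented as $\partial U$) bounds $D_i$ from the outside, the tangent $c_i'$ has winding $-1$ relative to $\hat\phi_i$, while it has winding $\phi(c_i)$ relative to $\phi$. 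Since $X$ is constant in the frame $\phi$, tracking the change-of-frame cocycle over $c_i$ gives $\mathrm{ind}(p_i) = 1 + \phi(c_i)$, and substituting into Poincar\'e--Hopf yields $\sum_i \phi(c_i) = \chi(U)$.

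The main obstacle is careful bookkeeping of orientations and sign conventions---the direction of the twist and the crossing signs in (1), and the relative orientations of $\partial U$, $\partial D_i$, and the chosen frames in (2). Once these are pinned down, both arguments are short and essentially classical; this is how the lemma was originally established in \cite{HJ}.
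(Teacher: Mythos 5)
The paper does not reprove this lemma --- it simply cites Humphries--Johnson \cite{HJ} --- so there is no internal proof to compare against. Your sketch is essentially the classical argument and is correct in its broad strokes: for twist-linearity, a local computation in an annular neighborhood of $a$ where each strand of $b$ crossing $a$ picks up a signed copy of $a$; for homological coherence, capping off and applying Poincar\'e--Hopf, with the index at each cap computed via the change-of-frame cocycle as $1 + \phi(c_i)$, whence $\sum_i (1 + \phi(c_i)) = \chi(U) + k$.

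One point worth tightening in part (1): you phrase the local analysis in terms of $\alpha_j'$ being ``isotopic rel endpoints'' to $\alpha_j$ concatenated with $\pm a$, but winding number is not an invariant of arcs up to homotopy or isotopy rel endpoints --- it is only well-defined for smooth closed curves (or arcs with matched endpoint tangent data). The correct formulation is to fix specific smooth immersed representatives of $b$ and of $T_a(b)$ that coincide outside $N$, then decompose the total angle variation of the tangent into the contribution from $S\setminus N$ (identical for both) and the contributions from the strands inside $N$; since the endpoint tangent directions agree, the per-strand difference is an integer, and the local model of the left twist shows this integer is $\epsilon_j\phi(a)$. Summing $\epsilon_j$ over the intersections gives $\pm\langle b,a\rangle$, and the sign is fixed by the orientation and twist conventions. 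With that caveat, both halves of your argument go through.
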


Let $\Delta_1, \ldots, \Delta_k$ denote small loops about the punctures of $S$ (oriented with the surface on their left); then the {\em signature} of a framing $\phi$ is the tuple 
\[\text{sig}(\phi) := (\phi(\Delta_1), \ldots, \phi(\Delta_k)) \in \mathbb{Z}^k.\]
A framing is said to be of {\em holomorphic type} if every $\phi(\Delta_i)$ is negative; this terminology comes from the fact that the horizontal vector fields of holomorphic abelian differentials give rise to such framings (compare Section \ref{subsec:stratabasics}).

\begin{remark}\label{rmk:holtype vs holdiff}
We note that {\em not} every framing of holomorphic type comes from a holomorphic abelian differential. This is the case for framings on surfaces of genus at least 3, but the following families of framings do not come from abelian differentials due to certain low-complexity strata being empty (see just below for the definitions of $\Arf_1$ and $\Arf$).
\begin{itemize}
\item $g=1$, $b=1$, and $\Arf_1(\phi) \neq 0$.
\item $g=2$, $b=1$, and $\Arf(\phi) = 0$.
\end{itemize}
\end{remark}

The peripheral curves $\Delta_i$ span a $k-1$ dimensional subspace of $H_1(S)$, so we can construct all framings with a given signature by specifying the values on $2g$ homologically independent curves \cite[Remark 2.7]{CS}. One particularly nice configuration is as follows:

\begin{definition}
A collection of simple closed curves $\mathcal B = \{a_1, b_1, \ldots, a_g, b_g\}$ on $S$ is called a {\em geometric symplectic basis} (GSB) if 
$i(a_i, b_i) = 1$ for all $i$ and all other pairs of curves from $\mathcal B$ are disjoint.
\end{definition}

\subsection{Framed mapping class groups}
The {\em framed mapping class group} $\FMod(S, \phi)$ associated to a framing $\phi$ is the stabilizer of $\phi$ in $\Mod(S)$ up to isotopy.
Equivalently, and more usefully, $f \in \FMod(S, \phi)$ if and only if it preserves all winding numbers, i.e., 
\[(f\cdot \phi)(a) := \phi(f^{-1}(a)) = \phi(a)\]
for every $a \in \mathcal S$.
In light of Lemma \ref{lemma:HJ}, in order to check if an element $f \in \Mod(S)$ actually preserves $\phi$, it suffices to show that show that $f$ preserves the $\phi$--winding numbers of all curves of a GSB.

Throughout the paper, a particularly important role will be played by the set of non-separating simple closed curves with $\phi(a) = 0$ (note that this does not depend on orientation); these curves are said to be {\em admissible}.
By twist-linearity (Lemma \ref{lemma:HJ}.\ref{item:TL}),  Dehn twists in admissible curves are always in $\FMod(S, \phi)$, and in \cite{CS} it is shown (for $g \ge 5$) that $\FMod(S, \phi)$ is generated up to finite index by admissible twists.

Since each orbit of $\Mod(S)$ on the set of framings has infinite size (this is an immediate consequence of Lemma \ref{lemma:HJ}) and $\FMod(S, \phi)$ is a stabilizer, it is an infinite-index subgroup.
Along the same lines, understanding the possible conjugacy classes of $\FMod(S, \phi)$ for different $\phi$ is equivalent to listing the $\Mod(S)$ orbits.
To state this ``classification of framed surfaces'' \cite{Kawazumi} (see also \cite{RW} for the relatively framed version), we first need to recall the definitions of the Arf invariant and its genus 1 version; see \cite[\S 2.2]{CS}, \cite[\S 2.4]{Kawazumi}, and \cite[\S 2.4]{RW} for more detailed discussions.

Suppose first that $g=g(S)\ge 2$ and that every $\phi(\Delta_i)$ is odd. In this case, we say that $\phi$ is of {\em spin type}.
\footnote{In this case, the framing induces a (2-)spin structure on the closed surface obtained by capping off all boundary components, and the Arf invariant of the framing coincides with the parity of the spin structure.}
Fix a geometric symplectic basis $\{a_1,b_1,\dots,a_g,b_g\}$ on $S$. Then the {\em Arf invariant} of $\phi$ is defined to be
\begin{equation}\label{eqn:arfdef}
\Arf(\phi) := \sum_{i=1}^g \left( \phi(a_i) +1 \right) \left( \phi(b_i) +1\right)  \mod 2.
\end{equation}
This invariant turns out to only be well-defined when each $\phi(\Delta_i)$ is odd, and in this setting it does not depend on our choice of GSB.
If $g=1$, then there is an $\mathbb{Z}$-valued refinement of the Arf invariant which we denote by
\[\Arf_1(\phi):= \gcd( \phi(c), \phi(\Delta_1) + 1, \ldots, \phi(\Delta_k)+1 \mid 
c \text{ is a non-separating simple closed curve}).\]

\begin{theorem}\label{thm:classframed}
Two framings $\phi$ and $\phi'$ of $S$ are in the same $\Mod(S)$ orbit if and only if
\begin{itemize}
    \itemindent=12pt
    \item [$(g=0)$] $\text{sig}(\phi) = \text{sig}(\phi')$
    \item [$(g=1)$] $\text{sig}(\phi) = \text{sig}(\phi')$ and $\Arf_1(\phi) = \Arf_1(\phi')$
    \item [$(g\ge2)$] $\text{sig}(\phi) = \text{sig}(\phi')$ and if $\phi$ and $\phi'$ are of spin type, then $\Arf(\phi) = \Arf(\phi')$.
\end{itemize}
\end{theorem}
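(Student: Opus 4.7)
My plan is to prove necessity and sufficiency separately. For necessity, I would verify each listed invariant is preserved by $\Mod(S)$. The signature is fixed because mapping classes fix the punctures, so the labeled tuple $(\phi(\Delta_i))_i$ is untouched. For $\Arf(\phi)$ and $\Arf_1(\phi)$, I would first check that the defining formula does not depend on the chosen GSB: any two GSBs are related by a finite sequence of elementary symplectic moves (Dehn twists of one GSB curve about another, and handle swaps), each realized by a mapping class, and invariance under each elementary move is a short computation using twist-linearity (Lemma~\ref{lemma:HJ}(1)). For $\Arf_1$ in genus 1, invariance is automatic because it is a gcd over a $\Mod(S)$-invariant set of curves and peripheral residues.

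For sufficiency, my strategy is to reduce to a \emph{normalization} statement: given $\phi$ and any target tuple $(x_i, y_i)_{i=1}^g \in \ZZ^{2g}$ compatible with $\text{sig}(\phi)$ and the Arf datum of $\phi$, there exists a GSB $\{a'_i, b'_i\}$ on which $\phi(a'_i) = x_i$ and $\phi(b'_i) = y_i$. Granting this, I find GSBs $\mathcal B, \mathcal B'$ on which $\phi$ and $\phi'$ take identical values. Transitivity of $\Mod(S)$ on GSBs then produces $f \in \Mod(S)$ with $f(\mathcal B) = \mathcal B'$, and since $\phi$ and $f \cdot \phi'$ then agree on $\mathcal B$ and on the peripheral loops (by signature equality), they agree on a basis of $H_1(S)$ and hence as framings.

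To prove the normalization statement I would proceed pair by pair. Let $T_i \subset S$ be a one-holed torus neighborhood of $a_i \cup b_i$ disjoint from $\bigcup_{j \ne i}(a_j \cup b_j)$. The subgroup of $\Mod(S)$ supported in $T_i$ surjects onto $\SL_2(\ZZ)$ acting on the $\ZZ$-span of $\{a_i, b_i\}$, and by twist-linearity this carries the pair $(\phi(a_i), \phi(b_i))$ through its full $\SL_2(\ZZ)$-orbit in $\ZZ^2$ while fixing $(\phi(a_j), \phi(b_j))$ for $j \ne i$. The $\SL_2(\ZZ)$-orbit of a pair $(p,q)$ is determined by $\gcd(p,q)$, which in turn controls the per-pair contribution $(p+1)(q+1) \bmod 2$ to the Arf formula. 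In particular, the genus 1 case is immediate: since $\Arf_1(\phi)$ is exactly the gcd datum, a single $\SL_2(\ZZ)$ word inside $T_1$ takes the pair to any target pair with the right gcd.

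The main obstacle appears in the $g \ge 2$ case, since per-pair $\SL_2(\ZZ)$ adjustments cannot alter per-pair gcds and hence cannot reach target tuples with different gcd distributions across pairs. To redistribute values across pairs while preserving the total Arf, I would construct \emph{transfer} mapping classes supported in a genus-2 subsurface containing $T_i \cup T_{i+1}$: Dehn twists about carefully chosen simple closed curves $c$ whose homology class and winding number $\phi(c)$ (computable via homological coherence, Lemma~\ref{lemma:HJ}(2)) are controlled, and whose action on $(\phi(a_i), \phi(b_i), \phi(a_{i+1}), \phi(b_{i+1}))$ is then read off from twist-linearity. The technical heart of the argument is producing enough such transfers to generate all Arf-preserving modifications of the tuple; this reduces, using the fact that $\Mod(S)$ surjects onto the stabilizer of $\Arf$ in $\operatorname{Sp}_{2g}(\ZZ)$, to a finite $(\ZZ/2)$-level case analysis. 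Combining these transfers with per-pair $\SL_2(\ZZ)$ moves then yields the normalization lemma and completes the proof.
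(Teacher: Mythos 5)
This theorem is not proved in the paper --- it is quoted from the literature (Kawazumi; see also Randal-Williams for the relative version) --- so your proposal has to stand on its own. Its architecture (necessity by invariance checks; sufficiency by normalizing the values of $\phi$ on a GSB and then using transitivity of $\Mod(S)$ on GSBs) is the standard route, but the toolkit you propose for the normalization step is provably insufficient, for a reason that goes to the heart of why the invariants are what they are. Every move you describe --- per-pair $\SL_2(\ZZ)$ moves supported in a one-holed torus $T_i$, and transfers supported in a genus-2 subsurface containing $T_i\cup T_{i+1}$ --- is supported in a subsurface disjoint from the punctures, and by homological coherence (Lemma \ref{lemma:HJ}(2)) such a subsurface has boundary winding number $\chi = -1$ or $-3$, which is odd. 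Hence $\phi$ restricted to the support is of spin type, its Arf invariant is preserved, and (by additivity of Arf) so is the global quantity $Q(\phi)=\sum_i(\phi(a_i)+1)(\phi(b_i)+1)\bmod 2$; one can also see this directly for your transfer $T_c$ with $c=a_i\# a_{i+1}$, where the change in $Q$ is $(p_i+p_{i+1})(p_i+p_{i+1}\pm 1)\equiv 0$. But when some $\phi(\Delta_j)$ is even the theorem asserts that $Q$ is \emph{not} an orbit invariant --- only the signature is --- so your moves cannot connect two such framings with different $Q$. The same defect kills the genus-1 case: your only moves there are per-pair ones, which preserve $\gcd(\phi(a_1),\phi(b_1))$, whereas $\Arf_1$ also absorbs the peripheral residues $\phi(\Delta_j)+1$. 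For instance on $S_{1,2}$ with $\text{sig}=(0,-2)$ one has $\Arf_1\equiv 1$ for every framing, so the GSB value tuples $(2,2)$ and $(3,3)$ must lie in one orbit even though their gcds differ. The missing ingredient is twists about connect sums of GSB curves with peripheral loops $\Delta_j$ (winding numbers again pinned down by homological coherence) --- exactly the role played by the curve $d$ in the paper's proof of Proposition \ref{prop:transitiveadm}.

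A second, independent problem is the claimed reduction of the transfer step to ``a finite $(\ZZ/2)$-level case analysis'' via the assertion that $\Mod(S)$ surjects onto ``the stabilizer of $\Arf$ in $\operatorname{Sp}_{2g}(\ZZ)$.'' That assertion is not meaningful as stated ($\Arf$ is attached to a quadratic refinement of the mod-2 intersection form, not to elements of $\operatorname{Sp}_{2g}(\ZZ)$; the relevant stabilizer is an orthogonal group over $\ZZ/2$), and in any case matching parities is not the hard part. Your per-pair moves preserve the honest integer $\gcd(\phi(a_i),\phi(b_i))$ of each handle, so even after all parities agree you must still produce transfers that redistribute integers across handles (e.g.\ carrying $(0,0,0,0)$ to $(2,0,2,0)$ in genus $2$, both of Arf $0$). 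Writing down these transfer curves, computing their winding numbers via homological coherence, and verifying that the resulting affine transformations of $\ZZ^{2g}$ act transitively on each fiber of $(\text{sig},\Arf)$ is the actual content of Lemma \ref{lem:existsGSB} and of this theorem; as it stands your sketch defers all of it.
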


In particular, for genus at least 2 there are only ever at most $2$ distinct conjugacy classes of framed mapping class groups.

The Arf invariant interacts in a complicated way with taking subsurfaces $V \subset S$; sometimes the Arf invariant of $\phi|_V$ is forced by the topology of $V$, and sometimes it can vary for different $V$ and $V'$ of the same topological type.
For later use, we record an example of this phenomenon below. See also the proofs of Propositions \ref{prop:transitiveadm} and \ref{prop:2 level trans}.

\begin{lemma}\label{lem:fullgenusArf}
Suppose that $V \subset S$ is a connected subsurface of full genus.
\begin{enumerate}
\item If $g(S) \ge 2$ and $\phi$ is of spin type, then $\Arf(\phi) = \Arf(\phi_V)$.
\item $g(S) = 1$ and $\phi$ is of holomorphic type, then $\Arf_1(\phi) = \Arf_1(\phi|_V)$
\end{enumerate}
\end{lemma}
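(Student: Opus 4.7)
The plan is to exploit two structural consequences of $V$ being a connected full-genus subsurface of $S$: first, one can choose a geometric symplectic basis $\mathcal{B} = \{a_1, b_1, \ldots, a_g, b_g\}$ of $S$ whose curves all lie in $V$ since $V$ contains all of the genus; second, an Euler characteristic count using $\chi(V) + \chi(\overline{S \setminus V}) = \chi(S)$ forces the number of components of $S \setminus V$ to equal the number of boundary components of $V$, so each complement component is a planar surface with \emph{exactly one} boundary circle.

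For part (1), the plan is to apply formula \eqref{eqn:arfdef} to $\mathcal{B}$ for both $\phi$ and $\phi|_V$. Since the winding numbers of the $a_i$ and $b_i$ are unchanged by restriction, the two sums are identical and the Arf invariants agree. The only subtlety is confirming that $\phi|_V$ is of spin type, meaning each boundary component $b$ of $V$ (regarded as a puncture of $V$) has odd $\phi$-value. This follows from homological coherence (Lemma \ref{lemma:HJ}.\ref{item:HC}) on the unique planar complement component $P$ bounded by $b$: the identity $-\phi(b) + \sum_{\Delta \in P} \phi(\Delta) = \chi(P) = 1 - |\{\Delta \in P\}|$, read mod $2$, forces $\phi(b)$ to be odd since every puncture of $P$ already has odd $\phi$-value.

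For part (2), I first observe that in genus one the holomorphic-type condition $\phi(\Delta_i) < 0$ combined with the global relation $\sum_i \phi(\Delta_i) = \chi(S_{1,n}) = -n$ forces every $\phi(\Delta_i) = -1$, so $\phi(\Delta_i) + 1 = 0$ for all $i$. Applying homological coherence to each complement component as above yields $\phi(b) + 1 = \sum_{\Delta \in P}(\phi(\Delta) + 1) = 0$ for each boundary curve $b$ of $V$, so both $\Arf_1(\phi)$ and $\Arf_1(\phi|_V)$ collapse to the gcd of $\phi$-values on non-separating simple closed curves --- in $S$ and in $V$ respectively. Since non-separating curves in $V$ remain non-separating in $S$, one divisibility is immediate; for the reverse I plan to show that every non-separating curve $c$ in $S$ has the same winding number as some non-separating simple closed curve $c' \subset V$ in the same $H_1$-class, which exists since $V$ has full genus.

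The hard part will be this last step. The plan is to argue that $\phi(c) - \phi(c')$ lies in the subgroup generated by the $\phi(\Delta_i) + 1$ and hence vanishes under the holomorphic-type hypothesis. Concretely, one connects $c$ and $c'$ by a sequence of homotopies in $S$ that push arcs of $c \cap (S \setminus V)$ across $\partial V$ and around punctures of the planar complement components, tracking the change in winding number using twist-linearity (Lemma \ref{lemma:HJ}.\ref{item:TL}); each such elementary move contributes $\pm (\phi(\Delta_i) + 1) = 0$, so $\phi$ is preserved throughout, completing the proof.
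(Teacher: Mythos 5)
Your proposal is correct and follows essentially the same route as the paper: part (1) is a direct application of \eqref{eqn:arfdef} to a GSB chosen inside $V$ (your extra check that $\phi|_V$ is of spin type is a worthwhile detail the paper leaves implicit), and part (2) reduces both $\Arf_1$ invariants to a gcd of winding numbers of non-separating curves and transfers a curve into $V$ by sliding across punctures, each slide preserving the winding number because $\phi(\Delta_i)+1 = 0$. Two small nits: the conclusion that each component of $S \setminus V$ is planar with a single boundary circle requires the full-genus hypothesis (a curve in a genus-bearing component would yield a second symplectic plane in $H_1$), not just the Euler characteristic count; and the winding-number change under sliding over a puncture is governed by homological coherence (Lemma \ref{lemma:HJ}.\ref{item:HC}), which is what the paper invokes, rather than twist-linearity.
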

\begin{proof}
When $S$ has genus at least 2, this is an immediate consequence of \eqref{eqn:arfdef}. In the case when $S$ has genus 1, homological coherence together with holomorphic type imply that two curves which differ by sliding over a boundary component must have the same winding number. Thus for any simple closed curve $c$ on $S$, there is some $c' \subset V$ with $\phi(c) = \phi(c')$, and hence their genus-1 Arf invariants must agree.
\end{proof}

Note that statement (2) is false if one does not assume holomorphic type.

\subsection{Framed change-of-coordinates}
The standard change-of-coordinates principle for the entire mapping class group roughly states that given two multicurves $\gamma$ and $\delta$, there is some $f \in \Mod(S)$ taking $\gamma$ to $\delta$ if and only if $S \setminus \gamma$ and $S \setminus \delta$ have the same topological type and are glued together in the same way.
This technique is often used in surface topology to show the existence of certain configurations of curves with prescribed intersection pattern and to show the transitivity of the $\Mod(S)$ action on such configurations.
Its proof is a corollary of the classification of surfaces: one uses the classification to build a homeomorphism between the complements then extends that to a self-homeomorphism of $S$.

In the framed setting, we can similarly use Theorem \ref{thm:classframed} to show the existence of configurations with certain intersection pattern and winding number (compare \cite[Proposition 2.5]{CS}). For example, we can quickly show that (sub)surfaces with genus always contain admissible curves. Essentially the same statement appears as Corollary 4.3 of \cite{salter_higher_spin}, but we include a proof as we will repeatedly use this statement throughout the paper.

\begin{lemma}\label{lem:genusadm}
For any framing $\phi$ on a surface $S$ of positive genus, there is some non-separating simple closed curve $a \subset S$ with $\phi(a)=0$.
\end{lemma}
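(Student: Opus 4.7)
The plan is to reduce the claim to running the Euclidean algorithm on winding numbers, with twist--linearity supplying the moves. Since $S$ has positive genus, I can pick a pair of non-separating simple closed curves $a_0, b_0$ with $i(a_0, b_0) = 1$; after choosing orientations I may assume $\langle a_0, b_0 \rangle = 1$. If either $\phi(a_0)$ or $\phi(b_0)$ already vanishes, there is nothing to prove, so assume $m := \phi(a_0)$ and $n := \phi(b_0)$ are both nonzero.

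The next step is to apply twist--linearity (Lemma \ref{lemma:HJ}(\ref{item:TL})) to observe that
\[
\phi\bigl(T_{a_0}^k(b_0)\bigr) = n - km, \qquad \phi\bigl(T_{b_0}^k(a_0)\bigr) = m + kn,
\]
and that the twisted curves remain non-separating and meet in one point (Dehn twists are homeomorphisms). These are precisely the reduction moves of the Euclidean algorithm acting on $(m,n) \in \ZZ^2$. Iterating them therefore terminates after finitely many steps at a pair of non-separating simple closed curves $(a_*, b_*)$ with
\[
\{\phi(a_*),\phi(b_*)\} = \{0,\, \pm\gcd(m,n)\},
\]
and the curve carrying the $0$ is the desired non-separating curve of winding number $0$.

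The only point that really needs to be checked is that the pair $(a_i, b_i)$ produced at each intermediate stage still satisfies $i(a_i, b_i) = 1$ and $|\langle a_i, b_i\rangle| = 1$, so that the next application of twist--linearity again has a $\pm 1$ coefficient in front of the winding number being eliminated. This is a short induction using that Dehn twists are homeomorphisms preserving both geometric and algebraic intersection, and should be stated explicitly before invoking the Euclidean algorithm. I do not anticipate any genuine obstacle beyond this bookkeeping; in particular, no case split on the Arf invariants or the signature of $\phi$ is needed, because the argument uses only local data on a single genus-one handle of $S$.
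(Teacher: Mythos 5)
Your argument is correct, and it takes a genuinely different route from the paper. The paper proves this lemma as a first instance of ``framed change-of-coordinates'': it builds an abstract framing $\psi$ with the same complete invariants as $\phi$ (signature, and Arf or $\Arf_1$ when relevant) whose value on one curve of a geometric symplectic basis is $0$, and then invokes the classification of framed surfaces (Theorem \ref{thm:classframed}) to produce a mapping class carrying $\psi$ to $\phi$; the image of that basis curve is the admissible curve. Your proof instead runs the Euclidean algorithm on the winding numbers of a single one-handle pair $(a_0,b_0)$ with $i(a_0,b_0)=1$, using only twist-linearity: the moves $(m,n)\mapsto(m,n-km)$ and $(m,n)\mapsto(m+kn,n)$ are realized by powers of $T_{a_0}$ and $T_{b_0}$, the geometric and algebraic intersection numbers are preserved because Dehn twists are homeomorphisms (and $\langle a_0, T_{a_0}^k(b_0)\rangle=\langle a_0,b_0\rangle$), and the algorithm terminates with one entry equal to $0$ and the other equal to $\pm\gcd(m,n)$. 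The resulting curve is non-separating, being the homeomorphic image of one, so it is the desired admissible curve. Your approach is more elementary and self-contained --- it needs only Lemma \ref{lemma:HJ}(\ref{item:TL}), not the classification theorem --- and it is essentially the argument behind the analogous statement for $r$-spin structures in \cite{salter_higher_spin}; the paper's version, while heavier, serves as the template for the more delicate transitivity arguments (Propositions \ref{prop:transitiveadm} and \ref{prop:2 level trans}) where a purely local twisting argument would not suffice.
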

\begin{proof}
Fix a GSB $\{a_1, \ldots, b_g\}$ on $S$. Then by stipulating winding numbers on our GSB
we can build a framing $\psi$ such that 
\begin{itemize}
    \item $\text{sig}(\phi) = \text{sig}(\psi)$
    \item $\psi(a_1) = 0$, and 
    \item if $g(S)=1$ then $\Arf_1(\psi) = \Arf_1(\phi)$, or
    \item if $g(S) \ge 2$ and $\phi$ is of spin type then $\Arf(\psi) = \Arf(\phi)$.
\end{itemize} 
Now by Theorem \ref{thm:classframed} there is some homeomorphism $f \in \Mod(S)$ taking $\psi$ to $\phi$, and the curve $f(a_1)$ is our desired admissible curve.
\end{proof}

Along the same lines, one can show that $S$ always admits a GSB with given winding numbers so long as those winding numbers yield the correct Arf invariant; the proof is left to the reader. See also the proof of the first part of \cite[Proposition 2.15]{CS}.

\begin{lemma}\label{lem:existsGSB}
Let $\phi$ be a framing of a surface $S$ of genus $g \ge 1$ and fix any tuple of integers $(x_1, y_1, \ldots, x_g, y_g)$ such that
\begin{itemize}
    \item if $g=1$, then 
    $\gcd(x_1, y_1, \phi(\Delta_1)+1, \ldots, \phi(\Delta_n)+1) = \Arf_1(\phi),$
    \item if $g \ge 2$ and $\phi$ is of spin type, then
    \[ \sum_{i=1}^g (x_i+1) (y_i +1) = \Arf(\phi) \mod 2\]
    \item if $g \ge 2$ and $\phi$ is not of spin type, then we impose no conditions on the tuple.
\end{itemize}
Then there is a GSB $\mathcal B = \{ a_1, b_1, \ldots, a_g, b_g\}$ on $S$ such that $\phi(a_i) = x_i$ and $\phi(b_i)=y_i$.
\end{lemma}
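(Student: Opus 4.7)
The plan is to imitate the proof of Lemma \ref{lem:genusadm}: build an auxiliary framing $\psi$ of $S$ that realizes the prescribed data on a chosen model GSB, verify via Theorem \ref{thm:classframed} that $\psi$ lies in the same $\Mod(S)$--orbit as $\phi$, then transport the model GSB by the resulting mapping class. To carry this out, I would fix an arbitrary GSB $\mathcal{B}' = \{a_1', b_1', \ldots, a_g', b_g'\}$ on $S$ and, using the fact (recorded just before Lemma \ref{lem:genusadm}) that a framing is determined by its signature together with its values on a symplectic basis, construct a framing $\psi$ of $S$ with $\text{sig}(\psi) = \text{sig}(\phi)$, $\psi(a_i') = x_i$, and $\psi(b_i') = y_i$.

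For $g \ge 2$ the orbit check is nearly immediate: if $\phi$ is of spin type, \eqref{eqn:arfdef} evaluated on $\mathcal{B}'$ gives $\Arf(\psi) = \sum_{i=1}^g (x_i + 1)(y_i + 1) \bmod 2 = \Arf(\phi)$ by hypothesis, while if $\phi$ is not of spin type there is no additional invariant to match. In either case Theorem \ref{thm:classframed} produces some $f \in \Mod(S)$ with $f \cdot \psi = \phi$, and the collection $\mathcal{B} := \{f(a_1'), f(b_1'), \ldots, f(a_g'), f(b_g')\}$ is a GSB satisfying $\phi(f(a_i')) = (f^{-1} \cdot \phi)(a_i') = \psi(a_i') = x_i$, and similarly $\phi(f(b_i')) = y_i$, as required.

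The main obstacle is the $g = 1$ case, because $\Arf_1(\psi)$ is defined as a gcd over \emph{all} non-separating simple closed curves of $S_{1,n}$ rather than merely the two curves $a_1', b_1'$. The key claim I would establish is that this gcd is already witnessed by $\psi(a_1') = x_1$, $\psi(b_1') = y_1$, and the shifted peripheral values $\phi(\Delta_j) + 1$. To see this, observe that any non-separating simple closed curve $c$ on $S_{1,n}$ lies in the $\Mod(S_{1,n})$--orbit of $a_1'$, and so can be reached from $a_1'$ by a finite product of Dehn twists in non-separating curves together with puncture-slides; iterating the twist-linearity and homological coherence parts of Lemma \ref{lemma:HJ} along such a sequence expresses $\psi(c)$ as an integer combination of $x_1$, $y_1$, and the numbers $\phi(\Delta_j) + 1$ (the shifts $+1$ arise from homological coherence applied to the once-punctured annuli encoding puncture-slides). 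Hence $\psi(c) \in (x_1, y_1, \phi(\Delta_j) + 1)\ZZ = \Arf_1(\phi)\ZZ$, which gives $\Arf_1(\phi) \mid \Arf_1(\psi)$; the reverse divisibility is immediate since $a_1', b_1'$ are themselves non-separating, so $\Arf_1(\psi) = \Arf_1(\phi)$ and the genus-1 clause of Theorem \ref{thm:classframed} applies, completing the argument exactly as in the higher-genus case.
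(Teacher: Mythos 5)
Your overall strategy --- build an auxiliary framing $\psi$ realizing the prescribed data on a fixed model GSB $\mathcal{B}'$, match invariants via Theorem~\ref{thm:classframed}, and transport $\mathcal{B}'$ by the resulting mapping class --- is the natural one, mirroring the proof of Lemma~\ref{lem:genusadm}, and your $g \ge 2$ case is complete and correct (including the verification that $\psi$ inherits spin type from $\phi$ via the matching signature, so that $\Arf(\psi)$ is defined and computable from $\mathcal{B}'$).

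The $g = 1$ case has a circularity gap in the key claim $\Arf_1(\psi) = \gcd(x_1, y_1, \phi(\Delta_1)+1,\ldots,\phi(\Delta_n)+1)$. You must show that $\psi(c)$ lies in the subgroup $I \subseteq \ZZ$ generated by $x_1$, $y_1$ and the $\phi(\Delta_j)+1$ for \emph{every} nonseparating $c$, and you argue this by writing $c = g(a_1')$ for a product $g$ of Dehn twists in nonseparating curves and puncture-slides and then ``iterating twist-linearity.'' But each twist $T_d$ contributes a term $\langle\cdot, d\rangle\,\psi(d)$, and for an arbitrary nonseparating $d$ you do not yet know $\psi(d) \in I$ --- that is precisely the statement under proof. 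The fix is to reduce mod $m := \gcd(x_1,y_1,\phi(\Delta_j)+1)$ and take a generating set of $\Mod(S_{1,n})$ for which $I$-membership visibly propagates: for instance $T_{a_1'}$, $T_{b_1'}$, and point-pushes (from iterating the Birman exact sequence). Then each generator fixes the map $c \mapsto \psi(c) \bmod m$ on nonseparating curves: $T_{a_1'}$ and $T_{b_1'}$ because $\psi(a_1'), \psi(b_1') \equiv 0 \pmod m$; twists in separating curves because such curves have zero algebraic intersection with everything; and a point-push $T_{c_1}T_{c_2}^{-1}$ because $c_1, c_2$ are homologous disjoint curves with $\psi(c_1)-\psi(c_2)=\pm(\psi(\Delta_j)+1)\equiv 0 \pmod m$ by homological coherence, so its net effect on $\psi(e)$ is $\langle e, c_1\rangle(\psi(c_1)-\psi(c_2)) \equiv 0$. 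Since nonseparating curves form one $\Mod(S_{1,n})$-orbit, $\psi(c) \equiv \psi(a_1') \equiv 0 \pmod m$ for all of them, and the rest of your argument then goes through.
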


In particular, any surface of genus at least $2$ contains nonseparating curves of arbitrary winding number.

The classification of framed surfaces can also be used to easily obstruct transitivity of the $\FMod(S, \phi)$ action.
For example, $\FMod(S,\phi)$ does not act transitively on the set of curves that separate off a genus 1 subsurface with one boundary component, even though those curves all have the same topological type and same winding number.
The reason is that the induced framing on the subsurface may have different $\Arf_1$ invariant.

We caution the reader that Theorem \ref{thm:classframed} does not imply transitivity on the set of multicurves of the same topological type that induce homeomorphic framings on each subsurface.
Indeed, suppose that some $\phi(\Delta_i)$ is even so $\phi$ does not have an induced Arf invariant.
If we consider the set of multicurves $\gamma = c \cup d$ where $c$ cuts off a genus 1 subsurface with one puncture and $d$ is an admissible curve on that subsurface, then the paragraph above implies that $\FMod(S, \phi)$ does not act transitively on this set, even though there is only one $\Mod(S \setminus \gamma)$ orbit of framing on $S \setminus \gamma$.
At issue is what happens when we try to glue together framings on subsurfaces to a framing on the entire surface; this can be dealt with by using {\em relative} framings and being careful about boundary conditions (compare the proof of Lemma 5.3 in \cite{CS}).
Since such arguments require a fair amount of delicacy and are beyond what we need in this paper, we will restrict ourselves to proving those transitivity results we will need in the sequel.

\begin{proposition}\label{prop:transitiveadm}
Let $\phi$ be a framing of a surface $S$ of genus at least 3. Then $\FMod(S, \phi)$ acts transitively on the set of pairs of admissible curves of the same topological type.
That is, if $\gamma, \gamma'$ are pairs of admissible curves and there is some $g \in \Mod(S)$ taking $\gamma$ to $\gamma'$, then there is also some $f \in \FMod(S, \phi)$ taking $\gamma$ to $\gamma'$.
\end{proposition}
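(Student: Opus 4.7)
Given $g \in \Mod(S)$ with $g(\gamma) = \gamma'$, the plan is to write the sought element as $f = kg$ for some $k$ in the $\Mod(S)$-stabilizer of $\gamma'$. Any such $f$ automatically satisfies $f(\gamma) = \gamma'$, and the condition $f \in \FMod(S, \phi)$ becomes $k \cdot \phi_2 = \phi$ where $\phi_2 := g \cdot \phi$. So the task reduces to producing such a $k$ by comparing $\phi$ and $\phi_2$ via the classification of framed surfaces (Theorem~\ref{thm:classframed}). Two observations are central: $\phi$ and $\phi_2$ share the same signature (since $\Mod(S)$ fixes each puncture pointwise), and both vanish on every component of $\gamma'$, using $\phi_2(a_i') = \phi(g^{-1}(a_i')) = \phi(a_i) = 0$ by admissibility of $\gamma$.

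For the disjoint case $i(a_1', a_2') = 0$, I cut $S$ along $\gamma'$ into components $W_1, \ldots, W_r$ and invoke Theorem~\ref{thm:classframed} on each. By the paragraph above, $\phi|_{W_i}$ and $\phi_2|_{W_i}$ share the same signature. The Arf hypotheses also match automatically, thanks to the vanishing on $\gamma'$: when $g(W_i) \geq 2$, the new boundary components have even winding number $0$, so neither restriction is of spin type and no Arf condition is imposed; when $g(W_i) = 1$, the value $0 + 1 = 1$ enters the $\gcd$ defining $\Arf_1$, forcing $\Arf_1(\phi|_{W_i}) = \Arf_1(\phi_2|_{W_i}) = 1$; and genus-zero components carry no Arf invariant at all. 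Theorem~\ref{thm:classframed} then provides $k_i \in \Mod(W_i)$ with $k_i \cdot \phi_2|_{W_i} = \phi|_{W_i}$, and gluing these via the identity across $\gamma'$ yields the required $k$.

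For the case when $a_1'$ and $a_2'$ intersect, the same outline applies with $\gamma'$ replaced by $\partial N$, where $N$ is a regular neighborhood of $a_1' \cup a_2'$, decomposing $k$ into a piece supported on $S \setminus N$ and a piece supported on $N$. The main obstacle here is that $\phi$ and $\phi_2$ need not agree on individual components of $\partial N$ a priori, only in their sum via homological coherence (Lemma~\ref{lemma:HJ}). Pinning these individual winding numbers down via further applications of homological coherence to subsurfaces of $N$, and then adjusting inside $N$ by a hands-on argument on this low-complexity surface while preserving $\gamma'$ setwise, completes the construction of $k$.
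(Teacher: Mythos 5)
Your strategy --- cutting along $\gamma'$ and applying the classification of framed surfaces (Theorem~\ref{thm:classframed}) to each complementary component --- falls into exactly the trap the paper warns about in the paragraph immediately preceding the proposition: Theorem~\ref{thm:classframed} applied component-wise does \emph{not} give transitivity on multicurves, because an equivalence $k_i \cdot \phi_2|_{W_i} = \phi|_{W_i}$ on each piece $W_i$ of $S \setminus \gamma'$ does not imply that the glued map $k$ satisfies $k \cdot \phi_2 = \phi$ on all of $S$. The point is that a framing on $S$ is pinned down by its signature together with its values on a GSB, and since $a_1', a_2'$ are nonseparating, every GSB of $S$ meets $\gamma'$. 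Extending $\gamma' = a_1'\cup a_2'$ to a GSB $\{a_1', b_1, a_2', b_2, a_3, b_3,\ldots\}$ with $a_3, b_3,\ldots$ contained in $S\setminus\gamma'$, the restrictions $\phi|_{W_i}$ and $\phi_2|_{W_i}$ say nothing about $\phi(b_1), \phi(b_2)$ or $\phi_2(b_1), \phi_2(b_2)$; there are framings $\phi_2 \neq \phi$ with the same signature and with $\phi_2|_{W_i} = \phi|_{W_i}$ for all $i$ which nevertheless disagree on $b_1$. For such $\phi_2$ your recipe permits $k_i = \mathrm{id}$, in which case any glued $k$ is a multitwist along $\gamma'$ --- but such multitwists preserve both $\phi$ and $\phi_2$ precisely because $\phi(a_j') = \phi_2(a_j') = 0$, so they can never rectify a disagreement on $b_1$. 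The admissibility of $\gamma'$, which you rely on to make the component-wise Arf conditions automatic, is the very reason the residual multitwist freedom in the gluing is inert.

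The paper's proof sidesteps the gluing problem altogether by constructing, from scratch, a GSB of $S$ containing $\gamma$ and a GSB of $S$ containing $\gamma'$ whose winding numbers match curve by curve, and then invoking $\Mod(S)$-transitivity on GSBs. Essentially all of the length and casework (Corollary~\ref{cor:specifytransverse}, Lemma~\ref{lem:adjustwinding}, the spin and $g = 3$ analyses, the separating/nonseparating split) is devoted to controlling the winding numbers of the dual curves $b_1, b_2$ --- precisely the data your argument leaves free. The same defect recurs in your intersecting case after cutting along $\partial N$, and the concluding clause there (``a hands-on argument on this low-complexity surface'') is a placeholder rather than an argument.
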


Before proving Proposition \ref{prop:transitiveadm}, we first record a useful lemma that allows us to adjust the winding numbers of curves in a configuration without changing their intersection properties. A similar statement appears as Corollary 4.4 of \cite{salter_higher_spin}.

\begin{lemma}\label{lem:adjustwinding}
Let $\phi$ be a framing of a surface $S$ and let $c_1, \ldots, c_k,d$ be a collection of simple closed curves. Assume there is some subsurface $T \subset S$, disjoint from all of the listed curves, such that either
\begin{itemize}
    \item $g(T) \ge 2$, or
    \item $g(T)=1$ and $\Arf_1(\phi|_T)=1$.
\end{itemize}
Suppose also that there is some arc $\varepsilon$ connecting $d$ to $T$ that is disjoint from all $c_i$.
Then for any $z \in \mathbb{Z}$, there is a simple closed curve $d_z$ such that $\phi(d_z)=z$ and $i(c_i, d_z) = i(c_i,d)$ for all $i$.
\end{lemma}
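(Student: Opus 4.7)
The plan is to produce $d_z$ as a band sum of $d$ with a simple closed curve $a \subset T$ along $\varepsilon$, where $a$ has a prescribed winding number. The key features of the hypotheses are that (i) the conditions on $T$ let us freely realize any winding number by a non-separating simple closed curve in $T$, and (ii) the disjointness of $\varepsilon$ and $T$ from every $c_i$ ensures that the band-sum modification happens in a region unseen by the $c_i$.

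First, I would apply Lemma \ref{lem:existsGSB} to the framed subsurface $(T, \phi|_T)$. The hypotheses---either $g(T) \geq 2$, or $g(T) = 1$ with $\Arf_1(\phi|_T) = 1$---are exactly those needed for Lemma \ref{lem:existsGSB} to place no constraint on the winding number of a single element of a geometric symplectic basis in $T$: when $g(T) \geq 2$ the (possible) spin Arf constraint couples two winding numbers and so does not fix either one alone, while when $g(T) = 1$ with $\Arf_1 = 1$ we may take the dual basis curve to have winding number $1$ so that the required gcd equals $1$. Thus for any target integer $w$ we obtain a non-separating $a \subset T$ with $\phi(a) = w$.

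Next, I would form the oriented band sum $d_z := d \#_\varepsilon a$ using two parallel copies of $\varepsilon$, producing a simple closed curve in a regular neighborhood of $d \cup \varepsilon \cup a$. A standard calculation with twist-linearity and homological coherence (Lemma \ref{lemma:HJ}) gives $\phi(d_z) = \phi(d) + \phi(a) + c$ for some correction $c \in \{-1, 0, 1\}$ that depends only on the orientation conventions of the band, not on $d$ or $a$. Setting $w := z - \phi(d) - c$ in the previous step then produces $\phi(d_z) = z$.

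The most delicate step, and the one I expect to be the main obstacle, is verifying that $i(c_i, d_z) = i(c_i, d)$ for every $i$. By construction our representative of $d_z$ agrees with $d$ outside a small neighborhood of $\varepsilon \cup T$, and all the inserted pieces (the parallel copies of $\varepsilon$ and the closing arc in $T$) are disjoint from each $c_i$ because $c_i \cap (T \cup \varepsilon) = \emptyset$. Hence this representative has the same set of intersection points with each $c_i$ as does $d$, immediately giving $i(c_i, d_z) \leq i(c_i, d)$. For equality I would run a bigon argument: any hypothetical bigon between $d_z$ and $c_i$ has both endpoints in $d \cap c_i$, and since $c_i$ does not enter the modified region $N(\varepsilon \cup T)$, the $d_z$-arc of such a bigon can be rerouted along $d$ to yield a bigon for the original pair $(d, c_i)$, contradicting the minimal position we may assume for $d$. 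Carrying out this rerouting rigorously when the $d_z$-arc genuinely traverses $N(\varepsilon \cup T)$ is the part of the argument that requires the most care, but since $c_i$ lies entirely outside this region, the detour through $T$ can be collapsed without altering the intersection count.
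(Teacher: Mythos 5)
Your proof is correct and takes essentially the same band-sum approach as the paper, with one mild streamlining worth noting: you treat the $g(T)\ge 2$ and $g(T)=1$ cases uniformly by arguing (correctly) that Lemma~\ref{lem:existsGSB} lets you realize an arbitrary winding number on a non-separating curve in $T$ in either case, whereas the paper splits the cases. For $g(T)\ge 2$ the paper also band-sums $d$ with a curve of prescribed winding number (there realized on a genus-2 subsurface), but for $g(T)=1$ the paper instead band-sums $d$ with the dual curve $b$ of a GSB $(a,b)$ having $\phi(a)=1$, and then uses twist-linearity to dial in the correct winding number by powers of $T_a$. Both routes work; your unified realization argument shows the case split is optional, so your version is a bit cleaner. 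Your observation that the gcd constraint with $y_1=1$ forces nothing on $x_1$ (precisely because $\Arf_1(\phi|_T)=1$) is the right justification, and the $g\ge 2$ Arf constraint indeed never pins down a single $x_1$.

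Two small remarks. First, the correction constant in $\phi(d_z)=\phi(d)+\phi(a)+c$ is $\pm 1$ (coming from $\chi$ of a pair of pants under homological coherence), never $0$, so your ``$c\in\{-1,0,1\}$'' is a superset of the truth; this is harmless since you only use that $c$ is a fixed constant depending only on the convention. Second, your bigon/rerouting argument for $i(c_i,d_z)=i(c_i,d)$ is more carefully spelled out than the paper's one-line assertion (the paper simply says the intersection pattern is clearly preserved, citing the analogue in Salter). The rerouting does require, as you flag, verifying that replacing the arc through the band by the original arc of $d$ still yields an embedded innermost-bigon configuration; this works precisely because the entire modification happens in a region disjoint from every $c_i$. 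No gaps.
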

\begin{proof}
Orient $d$ such that the arc from $d$ to $T$ exits $d$ from its left-hand side.

Suppose first that $g(T)=2.$
Then by Lemma \ref{lem:existsGSB} there is a nonseparating curve $e$ on $T$ with winding number $-z - \phi(d) -1$.
Since $d$ is not separated from $T$, we may concatenate $\varepsilon$ with an arc connecting $\partial T$ to the left side of $e$ and take the connect sum of $d$ and $e$ along this composite arc. Let $d_z$ be the resulting curve; then by homological coherence (Lemma \ref{lemma:HJ}.\ref{item:HC}) we have that 
\[\phi(d_z) + \phi(d) + \phi(e) = -1\]
and so $d_z$ is our desired curve.
It clearly has the same intersection pattern as $d$ with each $c_i$ since we have only altered $d$ away from $c_i$ (see also the proof of \cite[Corollary 4.4]{salter_higher_spin}).

In the case that $g(T) = 1$, our assumption on $\Arf_1(\phi|_T)$ implies (via Lemma \ref{lem:existsGSB}) that there is some GSB $(a,b)$ on $T$ with $\phi(a)=1$.
Choose an arc from $\partial T$ to $b$ disjoint from $a$, then take the connected sum of $d$ with $b$ along the concatenation of $\varepsilon$ with this arc. This results in a new curve $d'$ that has the same intersection pattern as $d$ with each $c_i$ and meets $a$ exactly once.
Twist-linearity (Lemma \ref{lemma:HJ}.\ref{item:TL}) now implies that by twisting around $a$ we can alter the winding number of $d'$ by an arbitrary amount to find our desired $d_z$.
\end{proof}

One particularly important consequence is that we can complete any admissible curve to a partial GSB while specifying the winding number of the transverse curve.

\begin{corollary}\label{cor:specifytransverse}
For any surface of genus at least $2$, any admissible $a$, and any $z \in \mathbb{Z}$, there is a curve $b$ with $i(a,b) = 1$ and $\phi(b) = z$.
\end{corollary}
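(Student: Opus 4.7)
The plan is to first exhibit any simple closed curve $b_0$ with $i(a,b_0)=1$ and then apply Lemma~\ref{lem:adjustwinding} to replace $b_0$ by a curve with the same intersection with $a$ but winding number exactly $z$. Such a $b_0$ exists by the classical change-of-coordinates principle for $\Mod(S)$: cut $S$ along $a$, take any properly embedded arc in $S \setminus N(a)$ joining the two sides of $a$, and close it up through $N(a)$. A regular neighborhood $N(a \cup b_0)$ is homeomorphic to $S_{1,1}$, so the complement $T := S \setminus N(a \cup b_0)$ is homeomorphic to $S_{g(S)-1,\,n+1}$ and is disjoint from both $a$ and $b_0$. The arc $\varepsilon$ required in Lemma~\ref{lem:adjustwinding} can be taken inside $N(a \cup b_0)$, running from $b_0$ to $\partial T$ without crossing $a$.

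When $g(S) \geq 3$, the complement $T$ has genus at least $2$, so Lemma~\ref{lem:adjustwinding} (applied with $c_1 = a$ and $d = b_0$) immediately produces a curve $b$ with $\phi(b) = z$ and $i(a,b) = i(a,b_0) = 1$, completing the proof in this regime.

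The main obstacle is the case $g(S) = 2$, where $T$ has genus $1$ and one must verify the $\Arf_1$ hypothesis of Lemma~\ref{lem:adjustwinding}. Homological coherence applied to $N(a \cup b_0)$ (which has $\chi = -1$) forces the new boundary component of $T$, oriented with $T$ on the left, to have winding number $+1$. Hence $\Arf_1(\phi|_T)$ divides $2$, and the subtle point is to rule out the value $2$. I would exploit the fact that cutting along the admissible curve $a$ produces two new boundaries of winding number $0$ (by homological coherence on the annulus $N(a)$, since $\phi(a)=0$), which forces $\Arf_1(\phi|_{S \setminus N(a)}) = 1$. By Lemma~\ref{lem:existsGSB} applied to $S \setminus N(a)$, there exists a nonseparating curve $a'$ in $S \setminus N(a)$ of odd winding number. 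Choosing $b_0$ to meet $a$ once while remaining disjoint from $a'$ (this can be arranged in the connected surface $(S \setminus N(a)) \setminus N(a')$ by the classical change-of-coordinates principle) ensures $a' \subset T$, so that $\Arf_1(\phi|_T) = 1$ and Lemma~\ref{lem:adjustwinding} applies as in the higher-genus case.
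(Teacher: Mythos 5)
Your proof is correct and uses the same strategy as the paper: exhibit a subsurface $T$ disjoint from $a$ and from a transverse curve $b_0$, together with a connecting arc avoiding $a$, and then invoke Lemma \ref{lem:adjustwinding} to reset $\phi(b_0)$ to $z$. The only difference is organizational: the paper first builds a genus-$1$ subsurface $T \subset S \setminus a$ with $\Arf_1(\phi|_T)=1$ (filled by a GSB pair with coprime winding numbers, which exists because $\Arf_1(\phi|_{S\setminus a})=1$) and only then chooses $b'$ disjoint from it, handling all $g \ge 2$ uniformly, whereas you choose $b_0$ first and take $T$ to be the entire complement of $N(a\cup b_0)$, which forces your (correct) auxiliary argument with the odd-winding-number curve $a'$ in the case $g=2$.
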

\begin{proof}
The subsurface $S \setminus a$ has two boundary components with winding number $0$ and so $\Arf_1(S \setminus a) = 1$.
Applying Lemma \ref{lem:existsGSB} we can pick some GSB on $S \setminus a$ with coprime winding numbers; let $T$ denote the subsurface filled by this pair of curves.
We can now pick any curve $b'$ disjoint from $T$ with $i(a,b') = 1$ .
Since $b'$ does not meet $T$ and $\Arf_1(\phi|_{T})=1$, we can apply Lemma \ref{lem:adjustwinding} to adjust $\phi(b')$ at will.    
\end{proof}

With these results in hand, we can now prove the desired transitivity statements.

\begin{proof}[Proof of Proposition \ref{prop:transitiveadm}]
Obviously transitivity on single curves follows from the result for pairs, but since the proof for pairs requires a bit of casework we will prove the result for single curves first as a demonstration of our techniques.

{\medskip \noindent \bf Single curves.}
Suppose first that $a, a' \subset S$ are both admissible. Complete $a$ to a GSB $a=a_1$, $b_1$, $\ldots, a_g, b_g$ of $S$. 
Using Corollary \ref{cor:specifytransverse}, there is some $b_1'$ on $S$ with $i(a',b_1')=1$ and $\phi(b_1') = \phi(b_1)$.
Now take the subsurface $Y'$ filled by $a'$ and $b_1'$ and consider its complement.
If $\phi|_{S \setminus Y'}$ is of spin type, then the additivity of the Arf invariant \cite[Lemma 2.11]{RW} implies that
\[\Arf(\phi|_{S \setminus Y'}) 
= \Arf(\phi) - \left( \phi(a')+1 \right) \left( \phi(b_1')+1 \right) 
= \sum_{i=2}^g \left( \phi(a_i)+1 \right) \left( \phi(b_i)+1 \right)  \mod 2.\]
Otherwise, it is not of spin type; in either case we can now apply Lemma \ref{lem:existsGSB} to find a GSB $a_2', b_2', \ldots, a_g', b_g'$ on $S \setminus Y'$ with
\[\phi(a_i)= \phi(a_i') \text{ and }\phi(b_i)= \phi(b_i') \text{ for all }
i.\]
By the usual change-of-coordinates principle (compare Lemma 2.3 of \cite{salter_higher_spin}), there is some $f \in \Mod(S)$ taking $a$ to $a'$, each $a_i$ to $a_i'$, and each $b_i$ to $b_i'$.
Since $f$ preserves the winding numbers of the curves of a GSB, it preserves the winding numbers of all simple curves (Lemma \ref{lemma:HJ}), and thus we see that $f \in \FMod(S, \phi)$.

{\medskip \noindent \bf Nonseparating pairs.}
If $g \ge 4$ and the admissible curves $a_1, a_2$ together do not separate $S$, then we can just repeat our argument for transitivity on single admissible curves: extend $a_1, a_2$ to an arbitrary GSB, use Corollary \ref{cor:specifytransverse} and \ref{lem:existsGSB} to extend $a_1', a_2'$ to a GSB with the same winding numbers, and then use the transitivity of the mapping class group action on GSBs to find some $f$ (necessarily in $\FMod(S, \phi)$) taking one GSB to the other.

If $g=3$ then we must be slightly more clever about how we choose our intial GSB since our choice of transverse curves $b_1$ and $b_2$ may constrain the winding numbers of the remaining curves $a_3$ and $b_3$ due to the $\Arf_1$ invariant.
Suppose first that $\phi$ is of spin type. Using Corollary \ref{cor:specifytransverse} twice, we can choose disjoint curves $b_1$ and $b_2$, each meeting their respective $a_i$ and disjoint from the other, such that 
\[
\Arf(\phi) + \phi(b_1) + \phi(b_2)=
0 \mod 2.\]
In particular, this implies that if we let $Y$ denote the (disconnected) subsurface obtained by taking a regular neighborhood of $a_1 \cup a_2 \cup b_1 \cup b_2$, then the contribution to $\Arf(\phi)$ of $\phi|_{S \setminus Y}$ must be 0, hence for any GSB $(a_3, b_3)$ on $S \setminus Y$ at least one of $\phi(a_3)$ or $\phi(b_3)$ must be odd.
Now we observe that
\[\text{sig}(\phi|_{S \setminus Y})=(\text{sig}(\phi), +1, +1)\]
and so $\Arf_1(\phi|_{S \setminus Y})$ is the $\gcd$ of an odd number and $2$, i.e., is $1$.

If $\phi$ is not of spin type then choose any disjoint $b_1$ and $b_2$, each meeting their respective $a_i$ and disjoint from the other, and define $Y$ similarly.
Then since some $\phi(\Delta_i)$ is even, the signature of $\phi|_{S \setminus Y}$ contains both an even number and $+1$, and so we see that $\Arf_1(\phi|_{S \setminus Y}) = 1$.
Therefore, no matter whether $\phi$ is of spin type or not, we can choose our $b_1$ and $b_2$ such that $\phi|_{S \setminus Y}$ has fixed $\Arf_1$, and so by Lemma \ref{lem:existsGSB} admits a GSB $a_3$, $b_3$ with $\phi(a_3)=0$ and $\phi({b_3})=1$.
We can now finish the proof by inserting a prime in all of the arguments above to get another GSB on $S$ with the same winding number data and then concluding as in the $g \ge 4$ case.

{\medskip \noindent \bf Separating pairs.}
Finally, suppose that $a_1 \cup a_2$ separates $S$ into two subsurfaces $T$ and $U$. In this case, neither of the complementary components to $a_1 \cup a_2$ is of spin type, so if $\phi$ is of spin type then we will need be somewhat clever about our choice of GSB to deal with the emergence of the Arf invariant.

Pick an arbitrary curve $b_1$ meeting $a_1$ and $a_2$ each exactly once. Since at least one of $T$ or $U$ has genus at least $2$ or genus 1 with $\Arf_1=1$, we can use Lemma \ref{lem:adjustwinding} to turn this curve into an admissible $b_1$ that also meets each of $a_1$ and $a_2$ exactly once.
Choose GSBs 
\[\mathcal B_{T} := s_1, t_1, \ldots, s_{g(T)}, t_{g(T)} 
\text{ for }T
\text{ and }
\mathcal B_{U} := u_1, v_1, \ldots, u_{g(U)}, v_{g(U)}
\text{ for }U\]
that are disjoint from $b_1$; then $\{a_1, b_1\} \cup \mathcal B_{T} \cup \mathcal B_{U}$ is a GSB for $S$.

Since $(a_1, a_2)$ and $(a_1', a_2')$ are in the same mapping class group orbit, there is a correspondence between their complementary components; let $T'$ and $U'$ denote the two components of $a_1' \cup a_2'$ corresponding to $T$ and $U$.
Since neither component is of spin type (having a boundary component with even winding number) or, if they have genus 1, have $\Arf_1 = 1$ with an admissible boundary component,
Lemma \ref{lem:existsGSB} implies that both $T'$ and $U'$ admit GSBs with any given tuples of winding numbers.
We may therefore choose GSBs $\mathcal B_{T'}$ and $\mathcal B_{U'}$ with the same winding numbers as those for $\mathcal B_{T}$ and $\mathcal B_{U}$. To extend these to a GSB of $S$, we just need to find an admissible curve disjoint from $\mathcal B_{T'} \cup \mathcal B_{U'}$ that meets $a_1'$ and $a_2'$ exactly once.

Suppose $\phi$ is of spin type. Then we see that for any choice of $b_1'$ meeting $a_1'$ exactly once and disjoint from $\mathcal B_T \cup \mathcal B_U$, we have
\begin{align*}
&\left(\phi(a_1) + 1\right)\left(\phi(b_1) + 1\right)
+ \sum_{g(T)}\left(\phi(s_i) + 1\right)\left(\phi(t_i) + 1\right)
+ \sum_{g(U)}\left(\phi(u_i) + 1\right)\left(\phi(v_i) + 1\right)
= \Arf(\phi)\\
= &\left(\phi(a_1') + 1\right)\left(\phi(b_1') + 1\right)
+ \sum_{g(T')}\left(\phi(s_i') + 1\right)\left(\phi(t_i') + 1\right)
+ \sum_{g(U')}\left(\phi(u_i') + 1\right)\left(\phi(v_i') + 1\right) \text{ mod }2
\end{align*}
which simplifies to $\phi(b_1) = \phi(b_1') \mod 2$ by our choices of $\mathcal B_{T'}$ and $\mathcal B_{U'}$.
Thus $\phi(b_1')$ must be even.
Now choose a curve $c$ on either $T'$ or $U'$ that 
\begin{itemize}
    \item is disjoint from $\mathcal B_{T'} \cup \mathcal B_{U'}$,
    \item meets $b_1'$ exactly once, and
    \item together with $a_1'$ bounds a surface of genus $1$ with 2 boundary components.
\end{itemize}
Such a $c$ can be obtained, for example, by taking the boundary of a regular neighborhood of $u_1' \cup v_1'$ and then connect summing that curve with $a_1'$. See Figure \ref{fig:admsep}.
By homological coherence (Lemma \ref{lemma:HJ}.\ref{item:HC}), it must be that $\phi(c)= \pm 2$ (where sign depends on orientation).
Twist-linearity (Lemma \ref{lemma:HJ}.\ref{item:TL}) then implies that some twist of $b_1'$ about $c$ will be admissible.
Thus the configurations of curves
\[a_1, b_1, a_2, \mathcal B_{T}, \mathcal B_{U}
\text{ and }
a_1', T_c^{-\phi(b_1')/2}(b_1'), a_2', \mathcal B_{T'}, \mathcal B_{U'}\]
have the same topological type, so there is a mapping class taking one to the other, and since all of the corresponding curves have the same winding number, any such mapping class must preserve $\phi$.

\begin{figure}[h]
    \centering
    \def\svgwidth{5in}
    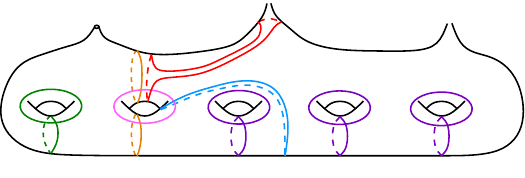
    \caption{GSBs and auxiliary curves as in the proof of Proposition \ref{prop:transitiveadm}.}
    \label{fig:admsep}
\end{figure}

If $\phi$ is not of spin type, then we can conclude by picking an arbitrary $b_1'$ disjoint from $\mathcal B_{T'} \cup \mathcal B_{U'}$. We then note that since $\phi$ is not of spin type, then there is some peripheral curve $\Delta_i$ with even winding number. Choose $c$ as before and let $d$ be a curve disjoint from all of the listed curves except $b_1'$, obtained by taking the connect sum of $a_2$ with this $\Delta_i$; by homological coherence again, its winding number must be odd. See Figure \ref{fig:admsep}.
Thus, by twisting around $c$ and $d$ we can change the winding number of $b_1'$ by any amount (while keeping all other winding numbers fixed) and so in particular $T_c^m T_d^n (b_1')$ is admissible for some $m, n$.
We can then conclude as in the spin case.
\end{proof}

\section{The admissible curve graph and its geometric model}\label{sec:models}

A \emph{graph of multicurves} for a surface $S$ is any graph whose vertices are multicurves on $S$. The simplest and most influential example is the \emph{curve graph} $\sC(S)$.
The curve graph  has all curves on $S$ as vertices and edges between two curves if and only if they intersect the fewest number of times possible for a pair of curves on $S$.
If $\xi(S) >1$ then edges correspond with disjointness, and when $\xi(S) =1$ the minimal intersection number is either 1 or 2.

We will focus on the following subset of the curve graph:
given a framing $\phi$ of $S$, the \emph{admissible curve graph} $\Cadm(S, \phi)$ relative to $\phi$ is the subgraph of $\sC(S)$ spanned by the non-separating curves that are admissible with respect to $\phi$. 

Proposition \ref{prop:transitiveadm} implies that the framed mapping class group $\FMod(S, \phi)$ acts transitively on the vertices of $\Cadm(S, \phi)$ and with finitely many orbits on its edges.
As a consequence of Lemma \ref{lem:genusadm},  every vertex of $\CS$ is distance 1 from a vertex of $\Cadm(S, \phi)$ when $\genus(S) \geq 2$. When $\genus(S) \geq 3$, Lemma \ref{lem:genusadm} also allows us to copy Salter's ``hitchhiking argument'' in the case of $r$-spin structures \cite[Lemma 3.11]{salter_higher_spin} to show $\Cadm(S, \phi)$ is connected.

\begin{lemma}\label{lem:Cadmconn}
	 If $\genus(S) \geq 3$, then for any framing on $S$, $\Cadm(S, \phi)$ is connected.
\end{lemma}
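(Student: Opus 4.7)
The plan is to follow Salter's ``hitchhiking argument'' \cite[Lemma 3.11]{salter_higher_spin}, adapting it from the $r$-spin setting to our framed setting. Given two admissible curves $a$ and $a'$, the goal is to produce a path in $\Cadm(S, \phi)$ connecting them, built out of pieces that exist inside positive-genus complementary subsurfaces.

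First I would invoke the classical fact that the nonseparating curve graph of $S$ is connected for $g(S) \geq 2$: there exists a path $a = c_0, c_1, \ldots, c_n = a'$ where each $c_i$ is nonseparating and consecutive $c_i, c_{i+1}$ are disjoint. The endpoints are admissible by hypothesis, but the intermediate $c_i$'s will in general have nonzero winding number. The work lies in replacing the non-admissible $c_i$'s by admissible ones without destroying disjointness.

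The core hitchhiking step: for each consecutive pair $c_i, c_{i+1}$, the complement $S \setminus (c_i \cup c_{i+1})$ has a component of genus at least $g(S) - 2 \geq 1$ (this is exactly where $g \geq 3$ enters), so Lemma \ref{lem:genusadm} produces an admissible curve $h_i$ disjoint from both $c_i$ and $c_{i+1}$. These ``hitchhikers'' give admissible shortcuts riding alongside the non-admissible path. The sequence $a = h_0, h_1, \ldots, h_{n-1}, h_n = a'$ then consists entirely of admissible curves (taking $h_0 = a$ and $h_n = a'$), and each $h_i$ is disjoint from $c_{i+1}$ which is in turn disjoint from $h_{i+1}$.

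The main obstacle will be precisely this last point: while each $h_i$ and $h_{i+1}$ lies in $S \setminus c_{i+1}$, they need not be disjoint from each other. To handle this, I would run a secondary induction on a measure of complexity (e.g.\ intersection number $i(h_i, h_{i+1})$) inside the subsurface $S \setminus c_{i+1}$, which has genus at least $g - 1 \geq 2$. If the two hitchhikers land in different components, one can reroute one of them using Lemma \ref{lem:adjustwinding} to modify winding numbers without affecting intersection patterns, or invoke the framed change-of-coordinates and transitivity (Proposition \ref{prop:transitiveadm}) to move hitchhikers into a common component. Ensuring that this inductive/adjustment scheme actually terminates --- and that each adjustment preserves admissibility --- is the delicate part of the argument, but the $g \geq 3$ hypothesis supplies enough genus at each step to carry it through.
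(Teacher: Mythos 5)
There is a genuine gap at exactly the step you flag as delicate: nothing in your argument makes consecutive hitchhikers $h_i$ and $h_{i+1}$ disjoint, and none of the proposed repairs accomplishes this. Lemma \ref{lem:adjustwinding} only adjusts winding numbers while \emph{preserving} a prescribed intersection pattern, so it cannot reduce $i(h_i,h_{i+1})$; Proposition \ref{prop:transitiveadm} gives transitivity of $\FMod(S,\phi)$ on pairs of admissible curves, which says nothing about connecting two specific admissible curves by a path. The ``secondary induction inside $S\setminus c_{i+1}$'' is in effect a recursion into the same connectivity problem on a surface of genus $g-1$, and this recursion does not bottom out: the admissible curve graph can be disconnected in genus $2$ (this is precisely why the lemma assumes $g\geq 3$). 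If instead you run the induction on $i(h_i,h_{i+1})$ inside $S$ itself, then the whole hitchhiking scaffold is superfluous and you are left with the unproved surgery claim that for intersecting admissible $a,a'$ there is an admissible $b$ with $i(a,b)=0$ and $i(b,a')<i(a,a')$; the difficulty there is that curves obtained by surgering $a'$ along $a$ need not be nonseparating, let alone have winding number zero.

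The fix is to hitchhike on subsurfaces rather than on curves, which is what the paper does. The graph whose vertices are genus-$1$ subsurfaces of $S$, with edges for disjointness, is connected for $g\geq 3$ by Putman's connectivity machinery. Every admissible curve lies in some genus-$1$ subsurface, and by Lemma \ref{lem:genusadm} every genus-$1$ subsurface contains an admissible curve. Choosing an admissible curve in each subsurface along a path of subsurfaces from one containing $a$ to one containing $a'$ yields a path in $\Cadm(S,\phi)$, because disjointness of consecutive \emph{subsurfaces} automatically forces disjointness of the chosen curves --- exactly the disjointness your version fails to secure.
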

\begin{proof}[Proof sketch]
    The graph of genus 1 subsurfaces (with edges for disjointness) is connected \cite{Putman_connectivity}. Since each genus 1 subsurface contains an admissible curve, the paths in this graph can be upgraded to a path in $\Cadm(S, \phi)$.
\end{proof}

Given a graph of multicurves $\mathcal{X}$, a subsurface $W \subseteq S$ is a \emph{witness} for $\mathcal{X}$ if every vertex of $\mathcal{X}$ intersects $W$ and $\xi(W) < 0$. We let $\wit(\mathcal{X})$ denote the set of all witness for $\mathcal{X}$. 
For the admissible curve graph, the witnesses are all subsurfaces whose complement has no genus and where the winding numbers of the boundary curves do not satisfy a particular set of linear equations.

\begin{lemma}\label{lem:admwitnesses}
    Let $S = S_{g,n}$ with $g \geq 3$ and $n \geq 1$. Fix a framing $\phi$ of $S$.
    \begin{enumerate}
        \item If $Z \subseteq S$ is a genus 0 subsurface and $z_1,\dots,z_k$ are the boundary components of $Z$ and peripheral loops about its punctures, oriented such that  $Z$ is to the left of each $z_i$, then $Z$ contains a nonperipheral curve of winding number 0 if and only if there is no $I \subsetneq \{z_1,\dots,z_k\}$ such that 
\begin{equation}\label{eqn:witness wns}
\sum_{z \in I} \phi(z) = 1-|I|.
 \end{equation}
        \item A subsurface $W$ of $S$ is a witness for $\Cadm(S, \phi)$ if and only if each curve in $\partial W$ is not admissible and each component $Z$ of $S \setminus W$ is a genus 0 subsurface with the following property: enumerate the boundary components and peripheral loops of $Z$ as in the previous item. Then there is no $I$ such that \eqref{eqn:witness wns} holds and both $I$ and $\{z_i\}_{i=1}^k \setminus I$ contain curves of $\partial W$.
        \item If $V,W \in \wit(\Cadm(S, \phi))$ are disjoint, then each is a genus 0 subsurface that does not contain any admissible curves, and there does not exist $Z \in \wit(\Cadm(S, \phi))$ that is disjoint from both $V$ and $W$.
    \end{enumerate}
\end{lemma}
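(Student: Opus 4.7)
The three parts all rely on homological coherence (Lemma \ref{lemma:HJ}.\ref{item:HC}), and build on each other.

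For (1), any nonperipheral simple closed curve $c$ in the genus-0 surface $Z$ separates $Z$ into two pieces $Z_I, Z_{I^c}$ corresponding to a partition of $\{z_1,\ldots,z_k\}$ with $|I|, |I^c| \ge 2$. Orienting $c$ so $Z_I$ lies on its left, homological coherence applied to $Z_I$ yields
\[
\phi(c) + \sum_{i\in I}\phi(z_i) = \chi(Z_I) = 1-|I|,
\]
so $\phi(c)=0$ is exactly \eqref{eqn:witness wns}. The converse---realizing any such partition by a simple closed curve in $Z$---is standard change-of-coordinates on genus-0 surfaces.

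For (2), I would characterize witness-hood by negation: $W$ fails to be a witness iff some admissible curve is disjoint from $W$, i.e., isotopic into $\partial W$ (the ``some boundary curve is admissible'' case) or into a component $Z$ of $S\setminus W$. Any positive-genus component $Z$ contains an admissible curve by Lemma \ref{lem:genusadm} (a curve nonseparating in the subsurface is nonseparating in $S$), so each $Z$ must be genus 0. For such $Z$, part (1) identifies its winding-0 nonperipheral curves with subsets $I$ satisfying \eqref{eqn:witness wns}; such a curve is additionally nonseparating in $S$ iff the two pieces $Z_I, Z_{I^c}$ can be rejoined through $S\setminus Z$, and since peripheral loops about punctures cannot reconnect the sides, this amounts to requiring both $I$ and $I^c$ to contain curves of $\partial W$. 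Combining these cases yields the stated characterization.

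For (3), the first two conclusions are immediate from (2): $V$ is contained in some component of $S\setminus W$, which is genus 0 by (2), so $V$ is genus 0 (and symmetrically for $W$), while any admissible curve in $V$ would be disjoint from $W$, contradicting $W$'s witness-hood. The main obstacle is ruling out a third disjoint witness $Z$. Assuming one exists, the same reasoning gives $Z$ genus 0 with no admissible curves, and no component of $S\setminus(V\cup W\cup Z)$ can have positive genus (else Lemma \ref{lem:genusadm} produces an admissible curve disjoint from all three). Thus $S$ decomposes entirely into genus-0 pieces glued along boundary circles, so the Euler characteristic identity $g(S) = e - p + 1$ (with $e$ shared circles and $p$ pieces) together with $g(S)\ge 3$ forces the dual graph of this decomposition to support at least three independent cycles, each carrying a nonseparating simple closed curve in $S\setminus(V\cup W\cup Z)$. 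A careful choice among these cycles---adjusting arc representatives within each complementary piece and applying homological coherence---should produce a nonseparating curve of winding number zero, i.e., an admissible curve disjoint from $V\cup W\cup Z$, contradicting all three being witnesses.
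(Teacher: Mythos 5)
Your parts (1) and (2), and the first half of (3), are correct and follow the same route as the paper's one-line argument: homological coherence applied to the piece cut off by a separating curve of a genus-$0$ surface, Lemma \ref{lem:genusadm} for positive-genus complementary components, and the observation that a winding-number-$0$ curve in a component $Z$ of $S\setminus W$ is nonseparating in $S$ exactly when both sides of it meet $\partial W$. One remark: what you actually prove in (1) is that $Z$ contains a nonperipheral winding-number-$0$ curve if and only if there \emph{is} a subset $I$ (with $2\le |I|\le k-2$) satisfying \eqref{eqn:witness wns}; the statement as printed says ``there is \emph{no} such $I$.'' The way the lemma is invoked later (in the proof of Theorem \ref{mainthm:CadmHHS}) shows your version is the intended one, so this is a negation typo in the statement rather than an error in your argument, but it is worth flagging.

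The genuine gap is in the final claim of part (3). A cycle in the dual graph of the decomposition of $S$ into $V$, $W$, $Z$ and the components of $S\setminus(V\cup W\cup Z)$ yields a nonseparating curve that runs \emph{through} the pieces on that cycle --- in general including $V$, $W$, or $Z$ --- so it does not produce a curve in $S\setminus(V\cup W\cup Z)$. Already for two disjoint witnesses (e.g.\ the configuration of Figure \ref{fig:dis_witness}) the complement of the witnesses is a union of annuli and carries no essential nonseparating curve whatsoever. Moreover, the concluding step --- ``adjusting arc representatives and applying homological coherence should produce a winding-number-zero curve'' --- cannot be made to work: every complementary piece is genus $0$, and on a genus-$0$ piece the winding number of a curve is \emph{determined} by which boundary/peripheral curves it separates off (this is exactly your part (1)), so there is nothing to adjust. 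All of the paper's devices for prescribing winding numbers (Lemmas \ref{lem:genusadm} and \ref{lem:adjustwinding}) require a positive-genus subsurface to operate in.

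The correct argument really is a consequence of (2) together with genus bookkeeping, and it ends by exhibiting a \emph{positive-genus} component of some $S\setminus Z$ rather than a winding-number computation. Let $Y_1,\dots,Y_m$ be the components of $S\setminus W$, attached to $W$ along $e_1,\dots,e_m$ curves; since all pieces have genus $0$, $\sum_i(e_i-1)=g\ge 3$. If $V\subseteq Y_{i_0}$, the component of $S\setminus V$ containing $W$ contains $W\cup\bigcup_{i\ne i_0}Y_i$, whose genus is $\sum_{i\ne i_0}(e_i-1)$; witness-hood of $V$ forces this to vanish, so $e_i=1$ for $i\ne i_0$ and $e_{i_0}=g+1\ge 4$, and moreover the $g+1$ curves of $\partial Y_{i_0}\cap\partial W$ must lead into $g+1$ \emph{distinct} components of $Y_{i_0}\setminus V$. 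A third disjoint witness $Z$ must, by the same computation applied to $Z$ in place of $V$, also lie in $Y_{i_0}$, hence in a single component of $Y_{i_0}\setminus V$. Then the component of $S\setminus Z$ containing $W$ also contains $V$ and at least $g\ge 3$ of those distinct components, each joined to $W$ and to $V$ by boundary curves; its dual graph contains a cycle, so it has positive genus, and Lemma \ref{lem:genusadm} yields an admissible curve disjoint from $Z$, a contradiction.
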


\begin{proof}
The first item is an immediate consequence of homological coherence and the fact that every curve on a genus $0$ surface separates it.
The second item follows from the first plus Lemma \ref{lem:genusadm}'s guarantee that every subsurface with genus contains an admissible curve; note that the condition that $\partial W$ meets both $I$ and $\{z_i\}_{i=1}^k \setminus I$ indicates whether or not a curve cutting off the boundaries $\{z_i\}_{i\in I}$ separates $S$ or not.
The third item is an immediate consequence of the second.
\end{proof}

Paralleling \cite{Vokes_HHS}, we now use the witnesses of a graph of multicurves to construct a ``model graph,'' which is in some sense the largest graph of multicurves that has the same witness set as the starting graph.

\begin{definition}
	Let $\fS$ be a collection of subsurfaces of $S$. We say $\fS$ is a set of \emph{valid witnesses} if for all $W \in \fS$,
	\begin{enumerate}
		\item $W$ is connected;
		\item $\xi(W) \geq 1$;
		\item  if $Z$ is a connected subsurface with $W \subseteq Z$, then $Z \in \fS$;
	\end{enumerate}
\end{definition}

\begin{definition} \label{definition: ksep}
	Let $\fS$ be a set of valid witnesses for the surface $S$.
	If $\fS = \emptyset$, define $\cK_{\fS}(S)$ to be a single point. Otherwise, define $\cK_{\fS}(S)$ to be the graph such that:
	\begin{itemize}
		\item each vertex is a multicurve $\gamma$ on $S$ with the property that each component of $S \setminus \gamma$ is \emph{not} an element of~$\fS$;
		\item  two multicurves $\gamma$ and $\delta$ are joined by an edge if either
		\begin{enumerate}
			\item $\gamma$ differs from $\delta$ by either adding or removing a single curve, or
			\item $\gamma$ differs from $\delta$ by ``flipping" a curve in some subsurface of $S$, that is, $\delta$ is obtained from $\gamma$ by replacing a curve $c \subset \gamma$ by a curve $d$, where $c$ and $d$ are contained in the same component $Y_c$ of $S \setminus (\gamma \setminus c)$ and are adjacent in $\sC(Y_c$). 
		\end{enumerate}
	\end{itemize}
\end{definition}

By construction, the set of witness for $\cK_\fS(S)$ is precisely $\fS$. Moreover, the vertex set of $\cK_{\fS}(S)$ is the maximal collection of multicurves whose set of witnesses is $\fS$. Thus, if $\cX$ is a graph of multicurves with $\wit(\cX) = \fS$, then the vertices of $\cX$ are a subset of $\cK_\fS(S)$. In the case of the admissible curve graph, this inclusion is Lipschitz.

\begin{lemma}\label{lem:inclusion_lipschtiz}
	If $\fS = \wit(\Cadm(S, \phi))$, then the inclusion $\Cadm(S, \phi) \to \cK_\fS(S)$ is $2$-Lipschitz
\end{lemma}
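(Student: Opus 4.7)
The plan is to show that any edge in $\Cadm(S,\phi)$ maps to a pair of vertices at distance at most $2$ in $\cK_{\fS}(S)$, by exhibiting a length-2 path through an intermediate multicurve. This reduces to checking two things: first, that single admissible curves and disjoint pairs of admissible curves are actually vertices of $\cK_{\fS}(S)$; second, that the relevant path consists of legitimate edges of the model graph.

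First I would verify that if $a$ is an admissible curve, then $\{a\}$ is a vertex of $\cK_\fS(S)$, i.e., no component of $S \setminus a$ lies in $\fS$. Here $S \setminus a$ has a single component (since $a$ is nonseparating) whose boundary consists of two parallel copies of $a$. Because $a$ is admissible, every boundary curve of this component is admissible, so by the characterization in Lemma \ref{lem:admwitnesses}(2) the component fails to be a witness. Thus $\{a\}$ is a vertex. The same argument applies to $\{b\}$.

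Next, given disjoint admissible curves $a$ and $b$ defining an edge of $\Cadm(S,\phi)$, I would check that $\{a,b\}$ is a vertex of $\cK_\fS(S)$. Each component of $S \setminus (a \cup b)$ has boundary made up of parallel copies of $a$ and/or $b$, all of which are admissible. Again by Lemma \ref{lem:admwitnesses}(2), no such component can be a witness, since the witness condition requires $\partial W$ to contain no admissible curves. Hence $\{a,b\}$ is a valid vertex of $\cK_\fS(S)$.

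With these two observations in hand, the path
\[
\{a\} \;\longleftrightarrow\; \{a,b\} \;\longleftrightarrow\; \{b\}
\]
consists of two edges of type (1) in Definition \ref{definition: ksep} (each differing by addition or removal of one curve). This gives $d_{\cK_\fS(S)}(a,b) \le 2$, so the inclusion is $2$-Lipschitz. I don't anticipate a real obstacle here: the only subtle point is the vertex-validity check, which follows cleanly from the fact that admissible boundary automatically rules out the witness condition in Lemma \ref{lem:admwitnesses}(2) — we do not even need to analyze the genus or winding-number conditions on the complementary components.
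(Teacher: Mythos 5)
Your proof is correct and follows essentially the same approach as the paper: exhibit the length-2 path $\{a\} \leftrightarrow \{a,b\} \leftrightarrow \{b\}$ via add/remove edges. The only difference is that you spell out the vertex-validity checks (via the boundary-admissibility criterion in Lemma~\ref{lem:admwitnesses}(2)), which the paper leaves implicit; that extra care is fine and adds nothing wrong.
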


\begin{proof}
	If $a,b$ are a pair of disjoint admissible curves, then $a \cup b$ is also a vertex of  $\cK_\fS(S)$, hence $a, a \cup b, b$ is a path of length 2 connecting $a$ and $b$ in $\cK_\fS(S)$.
\end{proof}

Vokes studied the family of $\cK_\fS(S)$ as quasi-isometric models for graphs of multicurves. Specifically, she showed that if $\cX$ is a graph of multicurves on $S$ with a cobounded action of $\Mod(S)$ and no annular witnesses, then the inclusion $\cX \hookrightarrow \cK_\fS(S)$ for $\fS = \wit(\cX)$ is a quasi-isometry. 
The advantage of using $\cK_\fS(S)$ as a quasi-isometric model is that she showed that $\cK_\fS(S)$ is a \emph{hierarchically hyperbolic space} in a natural way. This means the coarse geometry of $\cK_\fS(S)$ can be well understood  using the subsurface projection machinery of Masur and Minsky and the relations between the subsurfaces in $\fS$; see \cite{BHS_HHSI,BHS_HHSII,Vokes_HHS} for full details. 

We note that while Vokes states her results in the case of an action of the full mapping class group, the only actual use of the action is in establishing the quasi-isometry described above. In particular, the proof in Section 3 of \cite{Vokes_HHS} as written demonstrates that $\cK_\fS(S)$ is a hierarchically hyperbolic space, even in the case where $\fS$ is not invariant under the mapping class group.

One consequence of Vokes's hierarchically hyperbolic structure is that Gromov hyperbolicity of the the graph is encoded in the disjointness of the witnesses.

\begin{theorem}[{Corollary 1.5 of \cite{Vokes_HHS}}]\label{cor:hyp_iff_no_disjoint}
	The graph $\cK_\fS(S)$ is Gromov hyperbolic if and only if $\fS$ does not contain a pair of disjoint subsurfaces.
\end{theorem}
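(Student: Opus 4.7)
The plan is to work within the hierarchically hyperbolic structure on $\cK_\fS(S)$ established by Vokes, whose HHS index set is $\fS$ (together with the annular pieces coming from multicurves that appear as vertices), whose associated hyperbolic spaces are the curve graphs $\sC(W)$, and whose orthogonality relation corresponds to disjointness of subsurfaces. With this structure in hand, the statement becomes an instance of the general HHS principle that Gromov hyperbolicity is equivalent to triviality of the orthogonality relation; the work is to verify each direction with the particular $\cK_\fS(S)$ in mind.

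For the forward direction, suppose $U, V \in \fS$ are disjoint. I would produce a quasi-isometrically embedded $\mathbb{Z}^2$-flat in $\cK_\fS(S)$, which precludes Gromov hyperbolicity. Choose pseudo-Anosov classes $f_U, f_V \in \Mod(S)$ supported respectively on $U$ and $V$; because $U$ and $V$ are disjoint, $f_U$ and $f_V$ commute, and they act loxodromically on $\sC(U)$ and $\sC(V)$ respectively. Starting from a base vertex $\gamma_0 \in \cK_\fS(S)$ whose intersections with $U$ and $V$ are both essential (such a $\gamma_0$ exists because $U, V \in \fS$ are witnesses and are disjoint, so a multicurve containing curves in both can be arranged so that no complementary component lies in $\fS$), I consider the orbit map $(m,n) \mapsto f_U^m f_V^n \gamma_0$. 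The lower bound from the HHS distance formula gives $d_{\cK_\fS(S)}(\gamma_0, f_U^m f_V^n \gamma_0) \succeq d_{\sC(U)}(\pi_U\gamma_0, \pi_U(f_U^m f_V^n \gamma_0)) + d_{\sC(V)}(\pi_V\gamma_0, \pi_V(f_U^m f_V^n \gamma_0)) \asymp |m|+|n|$, while the upper bound is automatic because $f_U, f_V$ act by isometries. This realizes a quasi-isometric embedding $\mathbb{Z}^2 \hookrightarrow \cK_\fS(S)$.

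For the reverse direction, assume that every two witnesses in $\fS$ intersect. The orthogonality relation on the non-annular part of the HHS index set is then trivial, and a standard HHS fact (essentially a corollary of the distance formula, cf.\ \cite{BHS_HHSII}) gives that an HHS with trivial orthogonality relation is Gromov hyperbolic. Concretely, without orthogonality the set of ``active'' witnesses between any two points forms a chain under nesting/transversality, and the distance formula is then controlled (up to additive error) by distance in any single top-level hyperbolic coordinate, yielding hyperbolicity.

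I expect the main obstacle to be handling the annular HHS-indices carefully in the reverse direction: two disjoint annuli whose cores are curves of vertices of $\cK_\fS(S)$ are still orthogonal in the HHS even if $\fS$ has no disjoint non-annular subsurfaces, so one must argue that annular contributions to the distance formula do not introduce orthogonal macroscopic flats. This can be done by observing that, in the absence of a disjoint pair in $\fS$, any two curves that could supply annular indices must both sit inside a common element of $\fS$, so their twisting contributions are subordinate to (and controlled by) distance in a single non-annular curve graph. The forward direction, by contrast, should be a direct construction once the base vertex is chosen carefully to ensure the projections to $\sC(U)$ and $\sC(V)$ are nonempty.
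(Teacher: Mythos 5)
The paper offers no proof of this statement---it is imported verbatim as Corollary 1.5 of Vokes's paper---and your overall strategy (the HHS structure on $\cK_\fS(S)$ with orthogonality given by disjointness of witnesses, plus the general criterion that an HHS is hyperbolic if and only if it admits no pair of orthogonal domains with unbounded coordinate spaces) is indeed how Vokes proves it. That said, two concrete points in your write-up are gaps. The more serious one is in the forward direction: a valid witness set $\fS$ need not be $\Mod(S)$-invariant, and the paper's application is precisely to $\fS = \wit(\Cadm(S,\phi))$, which is only $\FMod(S,\phi)$-invariant (see the paragraph immediately preceding the statement). Without invariance, $f_U^m f_V^n\gamma_0$ need not be a vertex of $\cK_\fS(S)$ at all---a complementary component $g(Y)$ can lie in $\fS$ even when $Y$ does not---and the assertion that ``the upper bound is automatic because $f_U, f_V$ act by isometries'' has no content, since these elements need not act on the graph. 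One can repair this by taking $\gamma_0$ to be a pants decomposition containing $\partial U\cup\partial V$ (so every complementary component of every $f_U^mf_V^n\gamma_0$ is a pair of pants, hence never a witness) and extracting both bounds from the distance formula, but the cleaner route---and Vokes's---is to drop the group elements entirely and invoke the HHS product-region/realization machinery: two orthogonal domains with unbounded coordinate spaces always yield a quasi-isometrically embedded flat.

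The second issue is your treatment of annular domains. Vokes's hierarchy structure on $\cK_\fS(S)$ has index set exactly $\fS$, and the definition of a valid witness set requires $\xi(W)\ge 1$, so there are no annular domains whatsoever; your final paragraph addresses a non-issue, and the resolution you sketch for it is not an argument. Relatedly, the ``concrete'' justification you offer for the reverse direction---that absent orthogonality the distance formula is controlled by a single top-level coordinate---is false for hierarchically hyperbolic spaces in general, since nested, non-orthogonal domains still contribute macroscopically to the distance formula. The reverse direction should rest solely on the quoted theorem that an HHS with no orthogonal pair of unbounded domains is Gromov hyperbolic; that is the correct and sufficient input.
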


\section{A quasi-isometry with the model}\label{sec:Cadm_qi_model}

Vokes's proof of the quasi-isometry between graphs of multicurves and their models relies on the action of the mapping class group in a fundamental way.
Specifically, given any connected graph of multicurves $\mathcal{X}$ that has no annular witnesses and has a cobounded action by $\Mod(S)$, she uses the ``change-of-coordinates'' principle and curve surgery arguments to build a quasi-isometry from $\cK_\fS(S)$  to $\mathcal{X}$, where $\fS$ is the set of witnesses of $\mathcal{X}$. 

In our setting, we only have access to the (weaker) framed versions of these techniques.
Moreover, there are infinitely many $\FMod(S, \phi)$ orbits of curves and of witnesses, so we cannot employ standard change-of-coordinates arguments of the form ``make a choice for each orbit, then propagate that choice around using the group action to get finiteness'' (e.g., \cite[Claim 4.3]{Vokes_HHS} or Lemma \ref{lem:int_bounds_dist_genus_sep} below).

Instead of relying on change-of-coordinates, we build our quasi-isometry $\cK_\fS(S) \to \Cadm(S, \phi)$ by going through an intermediary graph $\cG$, which admits a coarsely Lipschitz map $\Pi$ onto $\Cadm(S, \phi)$ (Lemma \ref{lem:septoadm_Lip}). 
One can then define a map $\Psi$ from $\cK_\fS(S)$ to subsets of $\cG$; while this map is not coarsely Lipschitz or even coarsely well-defined, the composition $\Pi \circ \Psi$ turns out to be (Proposition \ref{prop:comp_coarseLip}).

The utility of this approach is that $\cG$ admits an action of the entire mapping class group, so we can use standard change-of-coordinates arguments.
A fruitful comparison is the ``hitching a ride'' argument we used to show the connectivity of $\Cadm(S, \phi)$ in Lemma \ref{lem:Cadmconn}.

For the remainder of the section,  $S=S_{g,n}$ will be a surface with $g \geq 3$ and $n \geq 1$ and $\fS$ will be the set of witnesses for $\Cadm(S, \phi)$ with respect to a fixed framing $\phi$. Since we will only be considering theses graphs for the surface $S$, we will use $\Cadm$ and $\cK$ to denote $\Cadm(S, \phi)$ and $\cK_\fS(S)$ respectively.

\subsection{Coarse maps and quasi-isometries} Let $X,Y$ be metric spaces. A map $f \colon X \to 2^Y$ is {\em coarsely well-defined} if $f(x)$ has uniformly bounded diameter for every $x \in X$. It is {\em coarsely Lipschitz} if there are constants $K \geq 1$ and $C \geq 0$ such that
\[\text{diam}_Y( f(x) \cup f(x') ) \le K d_X(x, x') + C\]
for every $x, x' \in X$. In particular, note that coarsely Lipschitz maps are in particular coarsely well-defined.
Prototypical examples are  the inclusion of a connected subgraph into a connected graph, the subsurface projection map from the  the marking graph to $\sC(W)$ where $W \subseteq S$ is a subsurface, or the systole map that sends a point in Teichm{\"u}ller space to its hyperbolic systole(s). 

When $X$ is a graph, one can simply define a map $f\colon X \to 2^Y$ on the vertices and assume that the image of any point on an edge is the union of the images of the endpoints of that edge. In this case, to show $f$ is coarsely Lipschitz, it suffices to show that 
\begin{enumerate}
\item $f(x)$ is uniformly bounded for all vertices $x$ of $X$, and
\item if $x$ and $x'$ are two vertices joined by an edge of $X$, then $\diam(f(x) \cup f(x'))$ is uniformly bounded.
\end{enumerate}

Two spaces are \emph{quasi-isometric} if there exist two coarsely Lipschitz map $f \colon X \to 2^Y$ and $\overline{f} \colon Y \to 2^Y$ such that $d_X(x, \overline{f} \circ f(x))$ is uniformly bounded for all $x \in X$. In this case, $f$  is a \emph{quasi-isometry} from $X$ to $Y$  and $\overline{f}$ is  the \emph{quasi-inverse} of $f$. 

\subsection{The genus-separating curve graph}

We begin building our quasi-isometry from $\cK$ to $\Cadm$ by defining the intermediate graph $\cG$ that we use throughout this section.
We say that a separating curve $c \subseteq S$ is {\em genus-separating} if each component of $S \setminus c$ has positive genus.

\begin{definition}
	The {\em genus-separating curve graph} $\cG = \cG(S)$ is the graph whose vertices are genus-separating curves, and where two vertices are connected by an edge if the corresponding curves are disjoint.
\end{definition}

Putman's argument that the separating curve graph is connected in the closed case also shows that $\cG$ is connected \cite{Putman_connectivity}. The key commonality are that every vertex of $\cG$ is adjacent to a genus separating curve that cuts off a torus with one boundary component.
\begin{lemma}
	The graph $\cG$ is connected so long as $\genus(S) \ge 3$.
\end{lemma}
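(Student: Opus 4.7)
The plan is to reduce the claim to a known connectivity result of Putman \cite{Putman_connectivity} via an intermediary subgraph. Specifically, let $\cG_1 \subset \cG$ denote the subgraph spanned by those genus-separating curves $c$ such that one component of $S \setminus c$ is a one-holed torus. I would establish two facts: (a) every vertex of $\cG$ is adjacent in $\cG$ to a vertex of $\cG_1$; and (b) $\cG_1$ itself is connected. Together these force $\cG$ to be connected, since any two vertices of $\cG$ can be pushed one step to $\cG_1$ and then joined by a path that remains in $\cG$.

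For (a), I would argue that if $c \in \cG$ is arbitrary, then $S \setminus c$ has two components $U$ and $V$ of positive genera summing to $g \geq 3$, so one of them (say $U$) satisfies $\genus(U) \geq 2$. Then $U$ contains an embedded one-holed torus $T$, whose boundary $\partial T$ is disjoint from $c$, bounds a one-holed torus on one side, and cuts off a subsurface of genus $g - 1 \geq 2$ on the other. Hence $\partial T \in \cG_1$ is a $\cG$-neighbor of $c$. For (b), I would identify $\cG_1$ with the graph whose vertices are isotopy classes of one-holed torus subsurfaces of $S$ and whose edges correspond to disjointness, via the map sending $c \in \cG_1$ to the one-holed torus component of $S \setminus c$. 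This map is a bijection on vertices (uniqueness of the one-holed torus side uses $g - 1 \geq 2$), and one verifies it sends edges to edges: if $c_1, c_2 \in \cG_1$ are disjoint and distinct, then $c_2$ cannot lie inside the one-holed torus side $T_1$ of $c_1$ (as genus-separating curves of $S$ do not embed in one-holed tori), so $c_2$ lies in the genus $g-1$ side and the same reasoning forces its one-holed torus side $T_2$ to lie there as well, disjoint from $T_1$. Connectivity of this graph of one-holed tori is exactly Putman's result \cite{Putman_connectivity}, which was already invoked in the proof sketch of Lemma \ref{lem:Cadmconn}.

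The main work, if one wished to reprove (b) rather than invoke Putman, would be a surgery argument on intersection number: given one-holed tori $T, T'$ with $i(\partial T, \partial T') > 0$, produce a third one-holed torus subsurface whose boundary has strictly smaller intersection with both. The hypothesis $g \geq 3$ provides the extra genus outside $T \cup T'$ needed to host such a surgery in the cases where surgery along $\partial T \cap \partial T'$ alone fails to produce a one-holed torus (for instance when $T \cup T'$ nearly fills $S$). The punctures of $S$ play no role in this maneuver, so Putman's closed-surface argument transfers verbatim, which is why we are content to cite it.
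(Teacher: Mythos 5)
Your proposal is correct and takes the same route the paper indicates in the sentence preceding the lemma: reduce to the subgraph of curves bounding one-holed tori, identify that subgraph with the graph of one-holed tori under disjointness, and invoke Putman's connectivity result (already cited for Lemma~\ref{lem:Cadmconn}). The paper leaves this as a sketch; your write-up supplies the missing details, including the genus bookkeeping ($g-1\ge 2$) that makes the identification with one-holed tori a graph isomorphism and ensures disjoint boundaries yield disjoint tori.
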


Since every subsurface with genus contains an admissible curve, we see that for any $c \in \cG$ both components of $S \setminus c$ are not witnesses for $\Cadm$. Thus $\cG$ is a subgraph of $\cK$.

\begin{remark}\label{rem:elecGtoK}
	While we will not use this in the sequel, we can in fact relate the geometries of $\cG$ and $\cK$ by considering their sets of witnesses.
	The witnesses for $\cG$ are exactly those subsurfaces that have genus $0$ complements, which form a strict superset of the witnesses for $\cK$ (characterized in Lemma \ref{lem:admwitnesses}).
	Using the ``factored space'' construction from \cite{BHS_HHS_AsDim}, we can thus view $\cK$ as being obtained from $\cK_{\wit(\cG)}(S)$ by coning off regions corresponding to the non-shared witnesses.
\end{remark}

As for the usual curve graph, intersection number  bounds distance in $\cG$.

\begin{lemma}\label{lem:int_bounds_dist_genus_sep}
	For each $n \geq 0$ there exists $N = N(n) \geq 0$ such that for any two genus-separating curves $c,d \in \cG$, if $i(c,d) \leq n$, then $d_{\cG}(c,d) \leq N$.
\end{lemma}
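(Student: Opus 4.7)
The plan is to carry out a standard change-of-coordinates argument, exploiting the fact that --- unlike $\Cadm$ or $\cK$ --- the graph $\cG$ admits an isometric action by the \emph{full} mapping class group $\Mod(S)$. Since ``genus-separating'' is a topological condition preserved by any homeomorphism, $\Mod(S)$ permutes the vertices of $\cG$ and preserves adjacency.

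The argument proceeds in three steps. First, I would observe that the topological type of an ordered pair of curves $(c,d)$ on $S$ with $i(c,d) \leq n$ is determined by the homeomorphism type of the regular neighborhood $N(c \cup d)$ together with the homeomorphism type of each component of $S \setminus N(c \cup d)$ and the combinatorial pattern specifying how these complementary pieces are glued to the boundary components of $N(c \cup d)$. Since $i(c,d) \leq n$, both $N(c \cup d)$ and $S \setminus N(c \cup d)$ have uniformly bounded complexity (in terms of $g$, $n$, and the bound $n$ on intersection number), so the number of such topological types is finite.

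Second, by the classical change-of-coordinates principle, two pairs $(c,d)$ and $(c',d')$ of the same topological type lie in the same $\Mod(S)$-orbit. Combined with the previous step, this shows that the set
\[
\mathcal P_n := \{ (c,d) : c,d \in \cG \text{ and } i(c,d) \leq n \}
\]
consists of only finitely many $\Mod(S)$-orbits. Since $\Mod(S)$ acts on $\cG$ by isometries, $d_{\cG}(c,d)$ is constant on each orbit.

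Third, since $\cG$ is connected (by the preceding Lemma), $d_{\cG}(c,d)$ is finite for every pair in $\mathcal P_n$. Choosing one representative from each of the finitely many $\Mod(S)$-orbits in $\mathcal P_n$ and letting $N = N(n)$ be the maximum of $d_{\cG}$ over these representatives yields the desired bound. The only genuine content is the finiteness-of-orbits step, and that is an entirely routine consequence of the classification of surfaces applied to $S \setminus N(c \cup d)$; there is no obstacle here precisely because we can use the full mapping class group action, which is exactly the issue the paper flags as making the analogous statements for $\Cadm$ and $\cK$ more delicate.
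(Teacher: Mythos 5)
Your proof is correct and takes essentially the same approach as the paper: use the full change-of-coordinates principle to conclude there are finitely many $\Mod(S)$-orbits of pairs of genus-separating curves with intersection at most $n$, then take the maximum $\cG$-distance over orbit representatives. The paper's version is more terse, but the substance is identical (and your explicit appeal to connectedness of $\cG$ to guarantee finiteness of those representative distances is a point the paper leaves implicit).
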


\begin{proof}
	By the change-of-coordinates principle in $\Mod(S)$, there exist finitely many pairs $\{(c_i,d_i)\}_{i=1}^k$ of genus-separating curves such that every pair of genus-separating curves that intersect at most $n$ times is in the $\Mod(S)$--orbit of some $(c_i,d_i)$.
	Setting $N =\max \{ d_\cG(c_i,d_i) :1 \leq i \leq k\}$,  the fact that $\Mod(S)$ acts by isometries on $\cG$ implies any two genus-separating curves that intersect at most $n$ times are at most $N$ far apart in $\cG$.
\end{proof}

\subsection{From genus-separating to admissible curves}

Define a map 
\[\Pi\colon \cG \to 2^{\Cadm}\]
by sending a genus-separating curve to the collection of admissible curves disjoint from it. This set is always non-empty by Lemma \ref{lem:genusadm}.

\begin{lemma}\label{lem:septoadm_Lip}
	The map $\Pi$ is coarsely Lipschitz.
\end{lemma}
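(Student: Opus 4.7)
The plan is to verify the two bullet conditions just above the lemma that imply $\Pi$ is coarsely Lipschitz: that each set $\Pi(c)$ has uniformly bounded diameter, and that for adjacent $c,c' \in \cG$, the set $\Pi(c) \cup \Pi(c')$ has uniformly bounded diameter in $\Cadm$. The engine in both steps is Lemma~\ref{lem:genusadm}, which produces an admissible curve inside any positive-genus subsurface.

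To see that $\diam_{\Cadm}(\Pi(c)) \le 2$, I would write $S \setminus c = A \sqcup B$ and note that both $A$ and $B$ have positive genus since $c$ is genus-separating. For $a, a' \in \Pi(c)$, either $a$ and $a'$ lie in different components of $S \setminus c$ (hence are disjoint, giving distance $\le 1$ in $\Cadm$), or they lie in the same component, say $A$. In the latter case I would apply Lemma~\ref{lem:genusadm} to the opposite component $B$ to produce an admissible curve $b \subset B$; this $b$ is disjoint from $a$, $a'$, and $c$, yielding the length-$2$ path $a, b, a'$ in $\Cadm$.

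For the edge condition, I would show the stronger statement that $\Pi(c) \cap \Pi(c') \ne \emptyset$ whenever $c$ and $c'$ are disjoint genus-separating curves; combined with the previous step this gives $\diam(\Pi(c) \cup \Pi(c')) \le 4$. Writing $S \setminus c = A \sqcup B$ and assuming without loss of generality that $c' \subset A$, the hypothesis that $c'$ separates $S$ forces $c'$ to separate $A$ as well, since otherwise $S \setminus c' = (A \setminus c') \cup B \cup c$ would remain connected. Let $A'$ be the component of $A \setminus c'$ whose boundary lies entirely on $c'$; this $A'$ is then also a component of $S \setminus c'$, and since $c'$ is genus-separating, $A'$ has positive genus. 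Applying Lemma~\ref{lem:genusadm} to $A'$ produces an admissible curve disjoint from both $c$ and $c'$.

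The only fiddly point in both steps is checking that an admissible curve produced inside a proper subsurface $W \subset S$ remains admissible when regarded as a curve in $S$: the winding number is local, so is preserved automatically, while non-separation and non-peripherality in $S$ follow from the corresponding properties in $W$ together with the fact that $\partial W$ is disjoint from the curve and glues $W$ to the rest of $S$. This is routine topological bookkeeping rather than a substantive obstacle, and I expect no other serious difficulties in the proof.
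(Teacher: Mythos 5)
Your proof is correct and follows essentially the same route as the paper: bound $\diam\Pi(c)$ by $2$ using an admissible curve in the opposite component of $S\setminus c$, and handle edges by exhibiting a positive-genus subsurface disjoint from both curves so that $\Pi(c)\cap\Pi(c')\neq\emptyset$. The only (immaterial) difference is that you locate this subsurface as the component of $S\setminus c'$ not meeting $c$, whereas the paper uses the component of $S\setminus c$ not containing $c'$.
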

\begin{proof}
	As remarked above, it suffices to check that the diameters of the images of vertices and edges are both bounded.
	
	Let $c \in \cG$ be any genus-separating curve and let $U,V$ denote the components of $S \setminus c$. 
	Let $a$ be any admissible curve in $\Pi(c)$, and assume without loss of generality that $a \subset U$. 
	Every admissible curve in $V$ is distance $1$ from $a$, and likewise every admissible curve in $U$ is disjoint from any curve in $V$.
	Thus $\Pi(c)$ has diameter $2$ as a subgraph of $\Cadm$.
	
	Now suppose $c$ and $d$ in $\mathcal G$ are disjoint; this implies that one of the (positive genus) components of $S \setminus c$ is nested inside a component of $S \setminus d$.
	In particular, this implies that $\Pi(c)$ and $\Pi(d)$ overlap, and since each has bounded diameter their union does as well.
\end{proof}

The map $\Pi$ is defined such that if $a \in \Cadm$ and $c \in \cG$ with $i(a,c) = 0$, then 
\[d_{\Cadm}(a, \Pi(c)) = 0.\]
Below, we prove a generalization of this fact that allows us to bound the distance between $a$ and $\Pi(c)$ by bounding the geometric intersection number $i(a,c)$.

\begin{lemma}\label{lem:adm_bdd_dist}
	For any $m\geq 0$, there exists $M = M(m) \geq 0$ such that for any admissible curve $a$ and any genus-separating curve $c$ with $i(a,c) \le m$, we have  
	$d_{\Cadm}(a, \Pi(c)) \le M$.
\end{lemma}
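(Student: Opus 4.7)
The plan is to reduce the statement to a purely topological fact about the genus-separating curve graph $\cG$, exploiting that (unlike $\Cadm$) the full mapping class group acts on $\cG$, which makes standard change-of-coordinates arguments available.

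First, I will show that it suffices to produce a genus-separating curve $c^*$ disjoint from $a$ with $d_\cG(c, c^*)$ bounded by some function of $m$. Given such a $c^*$, the admissibility of $a$ together with $a \cap c^* = \emptyset$ places $a$ in $\Pi(c^*)$, and then the coarse Lipschitz property of $\Pi$ (Lemma \ref{lem:septoadm_Lip}) gives
\[
d_\Cadm(a, \Pi(c)) \leq \diam\bigl(\Pi(c) \cup \Pi(c^*)\bigr) \leq K \cdot d_\cG(c, c^*) + C,
\]
yielding the desired bound $M(m)$.

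To construct $c^*$, I will use the standard $\Mod(S)$ change-of-coordinates principle. The topological type of a pair $(a, c)$ with $a$ non-separating, $c$ genus-separating, and $i(a, c) \leq m$ is determined by finitely much data depending on $m$ and $g$, so there are only finitely many $\Mod(S)$-orbits of such pairs. For each orbit I fix a representative $(a_0, c_0)$ and exhibit a genus-separating curve $c_0^*$ disjoint from $a_0$: such a curve exists because the complement of a regular neighborhood of $a_0$ is a connected surface of genus $g-1 \geq 2$, so it contains a simple closed curve bounding a one-holed torus away from the two new boundary components, and this curve is necessarily genus-separating in $S$. The distance $D_0 := d_\cG(c_0, c_0^*)$ is finite because $\cG$ is connected, and taking the maximum over the finitely many orbits yields a bound $D = D(m)$.

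Given an arbitrary $(a, c)$ satisfying the hypotheses, I choose $f \in \Mod(S)$ with $f(a_0, c_0) = (a, c)$ for the appropriate orbit representative and set $c^* := f(c_0^*)$. Then $c^*$ is genus-separating and disjoint from $a$, and since $\Mod(S)$ acts isometrically on $\cG$,
\[
d_\cG(c, c^*) = d_\cG(c_0, c_0^*) = D_0 \leq D,
\]
completing the argument. There is no real obstacle here; the feature worth emphasizing is that this argument bypasses the lack of a $\Mod(S)$-action on $\Cadm$ because admissibility of $a$ is invoked only at the very end to certify $a \in \Pi(c^*)$, so no framed change-of-coordinates results (e.g., Proposition \ref{prop:transitiveadm}) or winding number adjustments (Lemma \ref{lem:adjustwinding}) are needed.
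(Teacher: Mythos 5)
Your proof is correct, but it takes a genuinely different route from the paper's. The paper proves Lemma \ref{lem:adm_bdd_dist} by an explicit surgery induction: it repeatedly surgers $c$ along an arc of $a$ to produce a genus-separating curve disjoint from the previous one whose intersection with $a$ drops by at least $2$, so after at most $m/2$ steps one reaches a genus-separating curve disjoint from $a$, having moved a bounded distance in $\cG$; the coarse Lipschitz property of $\Pi$ (Lemma \ref{lem:septoadm_Lip}) then finishes as in your argument. You instead observe that the pair $(a,c)$ need only be treated as (non-separating, genus-separating) with $i(a,c)\le m$, apply the standard $\Mod(S)$ change-of-coordinates to get finitely many orbits of such pairs, fix a disjoint genus-separating companion for each representative, and transport by the isometric $\Mod(S)$-action on $\cG$ --- invoking admissibility of $a$ only at the last step to certify $a\in\Pi(c^*)$. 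Your key observation, that the framing plays no role until the final membership check, is exactly right, and your argument is in the same spirit as the paper's own Lemma \ref{lem:int_bounds_dist_genus_sep} (which is also proved by orbit finiteness). What the two approaches buy: yours is shorter and avoids the delicate pair-of-pants surgeries (including the case analysis over whether the surgered boundary curves are separating), at the cost of being non-constructive --- the bound $M(m)$ depends on an unexamined maximum over orbit representatives --- whereas the paper's surgery gives an explicit path in $\cG$ of length at most $m/2$ and hence a concrete constant. The existence of the disjoint genus-separating curve $c_0^*$ and the finiteness of $d_\cG(c_0,c_0^*)$ both use $g\ge 3$ (connectivity of $\cG$), which is in force throughout the section, so there is no gap.
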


We will  only ever apply this lemma with $m=2$, but since the proof for general $m$ is not much harder we choose to include it here.

\begin{proof}[Proof of Lemma \ref{lem:adm_bdd_dist}]
	If $a$ is disjoint from $c$, then $a \in \Pi(c)$ and we are done.
	Otherwise, we will surger $c$ along $a$ to produce a new genus-separating curve $c'$ disjoint from $c$ that intersects $a$ strictly fewer times.
	By Lemma \ref{lem:int_bounds_dist_genus_sep}, this will allow us to decrease the intersection number of $a$ and $c$ at the cost of moving $c$ a fixed distance in $\mathcal G$.
	Since $\Pi$ is a coarsely Lipschitz map, this procedure moves the projection a uniformly bounded amount in $\Cadm$, proving the desired statement.
	
	Since $S$ has genus at least 3, there is at least one component $U_c \subset S \setminus c$ of genus at least 2.
	Consider an arc $\alpha$ of $a \cap U_c$. The regular neighborhood of $c \cup \alpha$ forms a pair of pants $P_{\alpha}$, one of whose boundaries is $c$; label the other two by $d$ and $e$. Because any strand of $a \cap U_c$ that meets $d$ or $e$ must travel through $P_\alpha$ while avoiding $\alpha$, any such strand must exit $P_\alpha$ through $c$. Thus, we have 
	\[i(a, d) + i(a,e) \le i(a,c)-2.\]
	
	If either $d$ or $e$ is separating,  then the other one is either separating or homotopic to a boundary curve of $S$ (they cannot both be homotopic to a boundary curve as $c$ is genus-separating).
	Since $U_c$ has positive genus, at least one of $d$ and $e$ is genus-separating; we then take $c'$ to be whichever is, completing the proof in this case.
	
	In the other case, $d$ and $e$ are both non-separating.
	Let $V_c \subset U_c$ denote the connected subsurface of $U_c \setminus (d \cup e)$ not containing $\alpha$.
	Choose an arc $\beta$ in $V_c$ connecting $d$ and $e$ that is disjoint from $a \cap V_c$.
	Such an arc always exists because either $a \cap V_c$ contains such an arc, or it does not, in which case one can take an arbitrary arc from $d$ to $e$ and surger it along its intersections with $a \cap V_c$ to make it disjoint; see Figure \ref{fig:adm_bdd_dist}.
	
	The curve $c'$ obtained from a regular neighborhood of $ d \cup e \cup \beta$ forms a pair of pants $P_\beta$ with $d$ and $e$. Since any arc of $a$ that enters $P_\beta$ through $c'$ cannot intersect $\beta$, that arc must exit through either $d$ or $e$. Thus $$i(c',a) \leq i(a,d) + i(a,e) <i(a,c).$$
	Since $c'$ is constructed to cut off a genus $\genus(U_c) - 1 \ge 1$ subsurface, we see that $c'$ is still genus-separating and is clearly disjoint from $c$.
	This completes the proof.
\end{proof}

\begin{figure}[ht]
	\centering
 \def\svgwidth{5.5in}
 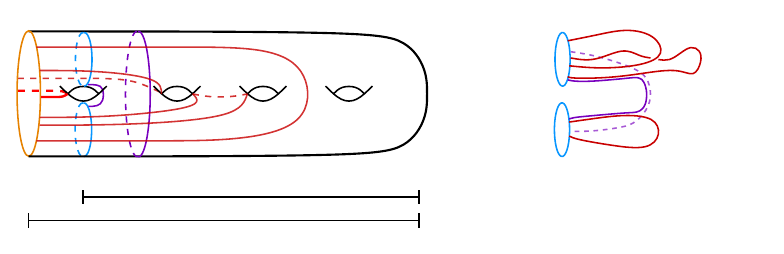
	\caption{On the left, the subsurfaces involved in the proof of Lemma \ref{lem:adm_bdd_dist}. On the right, surgering an arbitrary arc $\beta'$ from $d$ to $e$ along $a \cap V_c$ to obtain a disjoint arc $\beta$.}
	\label{fig:adm_bdd_dist}
\end{figure}

\subsection{A quasi-inverse}
We now construct a map $\Psi$ that sends vertices of $\cK$ to sets of genus-separating curves so that the composition $\Pi \circ \Psi$ is a quasi-inverse of the inclusion $\Cadm \to \cK$.  The idea to is assign a multicurve $\alpha \in \cK$ to the set of genus-separating curves that intersect the components of $S \setminus \alpha$ in a particularly nice way. This is always possible by the following lemma.

\begin{lemma}\label{lem:sep_curve_int_multicurve}
	For any multicurve $\alpha$ on $S$, there exists a genus-separating curve $c$         
	so that for each component $Y$ of $S \setminus \alpha$, we have exactly one of the following:
	\begin{enumerate}
		\item $c$ is disjoint from $Y$,
		\item $c \subseteq Y$,
		\item \label{item:end_pt_int} $c\cap Y $ is a single arc with both endpoints on the same curve of $\partial Y$, or
		\item \label{item:interior_pt_int} $c \cap Y$ is a pair of parallel arcs that both go from one curve $y_1 \in \partial Y$  to a different curve $y_2 \in \partial Y$.
	\end{enumerate}
\end{lemma}

\begin{proof}
	If a component of $S \setminus \alpha$ has positive genus, then the lemma is true using a separating curve cutting off that genus.
	Otherwise, the dual graph $D$ of $\alpha$ on $S$ must contain a cycle. We can use the dual graph to build such a  separating curve $c$ as follows:
	\begin{enumerate}
		\item Take any cycle $v_1, \ldots, v_n$ in the dual graph $D$ that meets any vertex of $D$ at most once. Let $a_i$ be the curve of $\alpha$/edge in the dual graph connecting $v_i$ to $v_{i+1}$ (where indices are taken mod $n$).
		
		\item On each subsurface $Y_i$ of $S \setminus \alpha$ corresponding to a vertex $v_i$ of the cycle, choose an arc $\beta_i$ connecting $a_{i-1}$ to $a_i$.
		\item The concatenation of the $\beta_i$ is now a curve $b$ that meets each $a_i$ exactly once.
		\item Set $c$ to be a regular neighborhood of $b \cup a_n$.
	\end{enumerate}
	By construction $c \cap Y_i$ is a pair of arcs parallel to $\beta_i$ for each $i \neq 1, n$, and it follows by inspection that $c \cap Y_1$ (and $c \cap Y_n$) is a single arc with both endpoints on $a_1$ (and $a_{n-1}$, respectively).
	See Figure \ref{fig:nicesep}.
\end{proof}

\begin{figure}[hb]
	\centering
\def\svgwidth{4.5in}
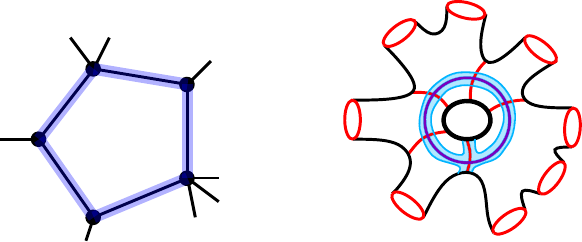
	\caption{Building a genus-separating curve out of a cycle in the dual graph.}
	\label{fig:nicesep}
\end{figure}

In light of Lemma \ref{lem:sep_curve_int_multicurve}, we define a map 
\[ \Psi \colon \cK \to 2^{\cG}\]
by setting $\Psi(\alpha)$ to be the set of genus-separating curves $c$ that satisfy the conclusion of Lemma \ref{lem:sep_curve_int_multicurve}.

Our discussion in Remark \ref{rem:elecGtoK} shows that this map is rather poorly behaved.
Viewing $\cK$ as (quasi-isometric to) the cone-off of (the model $\cK_{\wit(\cG)}(S)$ for) $\cG$, this map sends cone point points to entire product regions. In particular, the diameter of $\Psi(\alpha)$ need not be bounded.
Nevertheless, we will show that the composition $\Pi \circ \Psi$ is coarsely Lipschitz and is hence a quasi-inverse of the inclusion $\Cadm \to \cK$.

The key technical step is the next lemma, which takes a component $Y$ of $S \setminus \alpha$ and a genus-separating curve $c \in \Psi(\alpha)$ and produces an admissible curve $a$ that intersects $c$ at most 4 times and is disjoint from $Y$. This admissible curve provides an ``anchor'' that allows us to modify $c$ inside the component $Y$ without large changes in the eventual composition $\Pi \circ \Psi(\alpha)$. It is in this lemma where we need the finer control over the genus-separating curve in $\Psi(\alpha)$ ensured by Lemma \ref{lem:sep_curve_int_multicurve} as opposed to defining $\Psi(\alpha)$ to be all genus-separating curves that intersect  each curve of $\alpha$ some fixed number of times.

\begin{lemma}\label{lem:anchoring_curve}
	Let $\alpha$ be a multicurve in $\cK$ and $c \in \Psi(\alpha)$. For each component $Y$ of $S\setminus\alpha$  that $c$ intersects, there exists an admissible curve $a_Y$ that is disjoint from $Y$ and has $i(c,a_Y) \leq 4$.
\end{lemma}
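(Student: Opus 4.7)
The plan is to case-split on the structural reason $Y$ fails to be a witness for $\Cadm$. Since $\alpha \in \cK$ forces $Y \notin \fS$, Lemma~\ref{lem:admwitnesses}(2) implies at least one of the following holds: (a) some curve of $\partial Y$ is admissible; (b) some component $Z$ of $S \setminus Y$ has positive genus; or (c) some genus-$0$ component $Z$ of $S \setminus Y$ satisfies the linear equation of Lemma~\ref{lem:admwitnesses}(1) on a proper subset $I$ of its boundary curves and peripheral loops, with $I$ and its complement both meeting $\partial Y$ (so the corresponding winding-$0$ curve is non-separating in $S$, hence admissible).

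In case (a), I would take $a_Y$ to be the admissible boundary curve itself. Applying Lemma~\ref{lem:sep_curve_int_multicurve} to the $\alpha$-components on either side of $a_Y$ shows that the arcs of $c$ contribute at most two endpoints to any single curve of $\alpha$, so $i(c,a_Y)\leq 2$. For case (b), I would locate a positive-genus $\alpha$-component $W \subseteq Z$, which must exist because cutting along the curves of $\alpha$ does not change total genus. Lemma~\ref{lem:sep_curve_int_multicurve} applied to $W$ says $c \cap W$ is empty, all of $c$, a single arc with both endpoints on the same curve of $\partial W$, or two parallel arcs between two curves of $\partial W$. In most of these, cutting $W$ along $c \cap W$ leaves a subsurface $W'$ of positive genus: two parallel arcs cobound a disk $D$ and $W \setminus D$ has genus $g(W)$; $c \subseteq W$ implies $c$ separates $W$ into pieces whose genera sum to $g(W) \geq 1$; and a single arc typically leaves a positive-genus component. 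Lemma~\ref{lem:genusadm} applied to $W'$ then gives an admissible $a_Y \subset W' \subseteq S \setminus Y$ disjoint from $c$, so $i(c,a_Y) = 0$. For case (c), the planar structure of $Z$ together with the at-most-two bound on $|c \cap \gamma|$ for each $\gamma \in \partial Z$ lets one realize the curve $d$ produced by the linear equation so that $i(c,d)$ is bounded.

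The main obstacle is the degenerate sub-case of (b) in which $g(W) = 1$ and $c \cap W$ is a single essential non-separating arc with both endpoints on the same boundary curve of $W$; then cutting $W$ along $c \cap W$ yields only an annulus, with no room for an admissible curve disjoint from $c$ inside $W$. In this configuration one cannot achieve $i(c,a_Y) = 0$, and the intersection budget of $4$ becomes essential. To handle it, I would apply the framed change-of-coordinates toolkit from Section~\ref{sec:background}: using Lemma~\ref{lem:adjustwinding}, start with a parallel copy of a boundary component of $W$ (disjoint from $Y$), connect-sum it with an auxiliary curve lying in a neighboring subsurface of $S \setminus Y$ with spare winding-number flexibility, and use twist-linearity to correct the winding number to $0$. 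This produces $a_Y$ disjoint from $Y$ whose intersection with $c$ is bounded by the at-most-two crossings with each of the at-most-two arcs of $c \cap (S \setminus Y)$, yielding $i(c,a_Y) \leq 4$ as required.
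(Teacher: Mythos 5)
Your high-level case-split (on \emph{why} $Y$ fails the witness criterion of Lemma~\ref{lem:admwitnesses}) is a reasonable reorganization of what the paper does, but the execution has several genuine gaps.

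First, in case~(b) you assert that $Z$ must contain a positive-genus $\alpha$-component $W$ ``because cutting along the curves of $\alpha$ does not change total genus.'' This is false: cutting along a non-separating curve lowers genus, so a positive-genus component $Z$ of $S \setminus Y$ can perfectly well decompose into genus-$0$ $\alpha$-components (e.g.\ two pairs of pants glued along two of their boundary curves form a genus-$1$ surface). The paper avoids this entirely by working directly with $Z$ rather than with an $\alpha$-component: it uses that $c$ is separating to see $c \cap Z$ separates $Z$, and that $c$ is \emph{genus}-separating to argue that one side of $Z - (c \cap Z)$ retains genus, then invokes Lemma~\ref{lem:genusadm}.

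Second, the ``main obstacle'' you single out — $c \cap W$ a single essential \emph{non-separating} arc — cannot occur. Since $c$ separates $S$, the two local sides of any arc of $c \cap W$ lie in different components of $S \setminus c$, so $c \cap W$ always separates $W$; if $c \cap W$ is a single arc it is therefore a separating arc, and cutting a genus-$g \geq 1$ surface along a separating arc always leaves a positive-genus piece. So the case you spend the most effort on is vacuous. Even setting that aside, the workaround via Lemma~\ref{lem:adjustwinding} is unconvincing: that lemma corrects the winding number by Dehn twisting about an auxiliary curve, but those twists can blow up $i(\cdot, c)$ unboundedly unless $c$ is among the curves $c_i$ the lemma is asked to protect, which is not arranged here.

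Third, case~(c) — genus-$0$ $Z$ with an admissible curve forced by the linear winding-number condition — is where essentially all the work of this lemma lies, and you dispose of it in one sentence (``the planar structure \dots\ lets one realize the curve $d$ \dots\ so that $i(c,d)$ is bounded''). The difficulty is that the admissible curve must cut off a \emph{specific} collection $A$ of boundary/peripheral curves of $Z$, and these may be distributed across all two or three components of $Z - (c \cap Z)$. The paper's construction is a careful combinatorial argument — splitting $A$ into $A_1, A_2, A_3, B_1, B_2$ according to which component each curve lies in or which arc of $c \cap Z$ it borders, and then joining in a fixed cyclic order with disjoint arcs that meet $c$ only at the ``seams'' — and this is exactly what produces the constant~$4$. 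Saying the intersection is ``bounded'' without such an argument does not prove the statement.
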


\begin{proof}
	Let $Y$ be a component of $S \setminus \alpha$ that $c$ intersects.  If any curve of $\alpha$ is admissible, then $c$ intersects that curve at most twice  and we are done.
    This also allows us to proceed by assuming that $S \setminus \alpha$ is disconnected: because each component of $S \setminus \alpha$ is not a witness, if $S \setminus \alpha$ is connected then $\alpha$ must contain an admissible curve.
	
	Since $Y$ is not a witness for $\Cadm$ by the definition of $\cK$, some component $Z$ of $S \setminus Y$ contains an admissible curve. If $c$ is disjoint from $Z$, then $c$ is disjoint from the admissible curve on $Z$ and again we are done.
	So suppose that $c$ intersects $Z$; then $c \cap Z$ separates $Z$ since $c$ is separating.  Since $c$ is genus-separating, if $Z$ has positive genus then at least one of the components  of $Z - (c \cap Z)$ must also have genus. Applying Lemma \ref{lem:genusadm}, this implies there is an admissible curve in $Z$ that is disjoint from $c$ whenever $Z$ contains genus. 
	
	We can therefore concentrate on the case where $Z$ has no genus. In this case, every curve on $Z$ is separating, and which curves of $Z$ are admissible are determined by how they separate the boundary components and peripheral curves of $Z$ (Lemma \ref{lemma:HJ}.\ref{item:HC}).
	Let $A$ be a set of boundary and peripheral curves of $Z$ such that any curve partitioning the boundaries and peripheral curves into $A$ and its complement must be admissible.
	We argue below that one can always draw a curve $a$ that cuts off the curves of $A$ and intersects $c$ at most 4 times. 
	
	To facilitate this, we first show that $c \cap Z$ cuts $Z$ into at most 3 components.
	Since $c$ intersects at most 2 components of $\partial Y$, it also intersects at most $2$ components of $\partial Z$ (and intersects each component at most twice) and must be disjoint from all peripheral curves.
	If $c$ intersects exactly one component of $\partial Z$, then we are in case 3 of Lemma \ref{lem:sep_curve_int_multicurve} and so $c \cap Z$ must be a single arc with both endpoints on the same boundary component of $Z$; in this case $Z -(c\cap Z)$ has two components.
	When $c$ intersects two distinct components $z_1,z_2$ of $\partial Z$, then we are in case 4 of Lemma \ref{lem:sep_curve_int_multicurve} and so $c \cap Z$ is a pair of  arcs $c_1,c_2$ such that either 
	\begin{itemize}
		\item both endpoints of $c_i$ are on $z_i$ for each $i \in \{1,2\}$, or
		\item $c_1,c_2$ are parallel arcs each running from $z_1$ to $z_2$.
	\end{itemize}
	In the first case, $Z - ( c \cap Z)$ has either two or three components and in the second it has two.
	
	To find an admissible curve on $Z$ that intersects $c$ at most $4$ times, let $Z_1, Z_2,Z_3$ be the components of $Z - (c\cap Z)$, with $Z_3$ being omitted in the case of two components. Without loss of generality, assume $\partial Z_2$ contains an arc of $c \cap Z$ in common with both $\partial Z_1$ and $\partial Z_3$ when there are three components. 
	Partition the curves in $A$ into $5$ (possibly empty) sets: $A_1,A_2,A_3$ and $B_1,B_2$. 
	The $A_i$ are the subsets of curves in $A$ that are contained in $Z_i$ for each $i$, while $B_1$ are the curve(s) that contains the endpoints of the arc in $c \cap Z$ shared by $\partial Z_1$ and $\partial Z_2$ and $B_2$ is the same for $\partial Z_2$ and $\partial Z_3$ (when $Z_3$ exists). Note that the $A_i$ may contain curves peripheral to the punctures, but the $B_i$ must always consist of essential curves on $S$.
	
	Order the curves in each $A_i$ and $B_i$ in any sequence, then join successive curves by disjoint arcs in the following order, skipping any empty sets: $A_1$, $B_1$, $A_2$, $B_2$, $A_3$.
	We further stipulate that the arcs must be disjoint from $c \cap Z$ unless some set is empty, in which case their intersection with $c \cap Z$ is allowed to be the difference of the indices of the $Z_i$ that the two sets border.
	For example, if only $A_2$ is empty then the arc from $B_1$ to $B_2$ must still be disjoint from $c$, since both $B_1$ and $B_2$ border $Z_2$, but if $B_1$, $A_2$, and $B_2$ are empty then the arc from $A_1$ to $A_3$ is allowed to meet $c \cap Z$ twice.
	Compare Figure \ref{fig:cutoff}.
	
	\begin{figure}
		\centering
    \def\svgwidth{3.5in}
    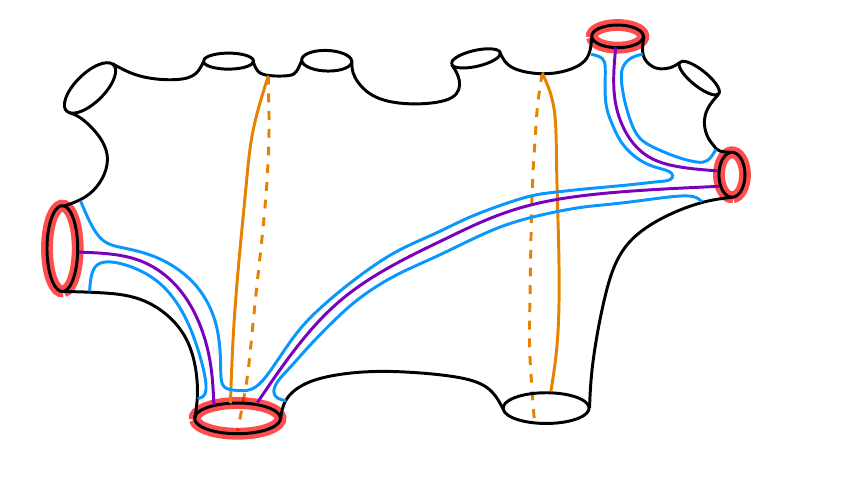
		\caption{Building a curve that cuts off $A$, and is hence admissible. The highlighted curves are in $A$. In this example, $A_2$ and $B_2$ are empty, so the arc from $B_1$ to $A_3$ meets $c \cap Z$ exactly once.}
		\label{fig:cutoff}
	\end{figure}
	
	A regular neighborhood of $A$ together with these arcs produces a curve $a$ that cuts off all of the curves in $A$, and hence must be admissible.
	It remains to note that the arcs and curves in the construction of $a$ are all disjoint from $c \cap Z$ except for the $B_i$'s and arcs that travel between different $Z_i$'s (which exist only when one of the $B_i$'s is empty). 
	In particular, this means that $a$ intersects $c$ only in a neighborhood of the $B_i$ or the above-mentioned arcs, and only does so at most twice for each component of the construction.
\end{proof}

We now prove that $\Pi \circ \Psi(\alpha)$ has uniformly bounded diameter for each $\alpha \in \cK$. The proof will use Lemma \ref{lem:anchoring_curve} to anchor the image of $\Pi \circ \Psi(\alpha)$ while we modify the genus-separating curves on the components of $S \setminus \alpha$ to reduce intersection numbers.

\begin{proposition}\label{prop:psi_to_pi}
	There is an $N \geq 0$ such that for any $\alpha \in \cK$ and $c,d \in \Psi(\alpha)$, there is  $c' \in \Psi(\alpha)$ with
	\begin{enumerate}
		\item $i(c',d) \leq 2 | \chi(S)|$ and 
		\item The diameter of $\Pi(c) \cup \Pi(c')$ in $\Cadm$ is at most $N$.
	\end{enumerate} 
	In particular, $\Pi \circ \Psi(\alpha)$ has uniformly bounded diameter for all $\alpha \in \cK$.
\end{proposition}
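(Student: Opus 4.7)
The plan is to construct $c'$ from $c$ by a sequence of per-component surgeries: within each component of $S\setminus\alpha$ met by $c$, I replace the arcs of $c$ in that component with arcs having small intersection with the corresponding arcs of $d$, while preserving the boundary endpoint data so that the result remains in $\Psi(\alpha)$. The anchoring curves of Lemma \ref{lem:anchoring_curve} will then control how the $\Pi$-image changes at each step.

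More concretely, enumerate the components of $S\setminus\alpha$ met by $c$ as $Y_1,\dots,Y_k$ and build a sequence $c=c_0,c_1,\dots,c_k=c'$ of genus-separating curves in $\Psi(\alpha)$ such that $c_i$ agrees with $c_{i-1}$ outside $Y_i$. Since both $c_{i-1}\cap Y_i$ and $d\cap Y_i$ are constrained by Lemma \ref{lem:sep_curve_int_multicurve} to be either a single arc with both endpoints on one component of $\partial Y_i$ or a pair of parallel arcs between two components, I can replace $c_{i-1}\cap Y_i$ by arcs with the same endpoints on $\partial Y_i$ and the same topological configuration, chosen to realize minimal intersection with $d\cap Y_i$. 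A bigon reduction argument bounds this minimal intersection by a constant multiple of $|\chi(Y_i)|$; summing over components and using additivity of Euler characteristic under cutting along multicurves yields $i(c',d)\leq 2|\chi(S)|$.

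For the $\Pi$-bound, I observe that each surgery $c_{i-1}\to c_i$ is confined to $Y_i$. By Lemma \ref{lem:anchoring_curve} there is an admissible curve $a_i$ disjoint from $Y_i$ with $i(c_{i-1},a_i)\leq 4$, and since $c_i$ and $c_{i-1}$ differ only inside $Y_i$ while $a_i$ lies outside $Y_i$, we also have $i(c_i,a_i)\leq 4$. By Lemma \ref{lem:adm_bdd_dist}, both $\Pi(c_{i-1})$ and $\Pi(c_i)$ lie within $M(4)$ of $a_i$ in $\Cadm$, so consecutive projections differ by at most $2M(4)$. Chaining the at most $k\leq |\chi(S)|$ steps yields a uniform bound on $\diam(\Pi(c)\cup\Pi(c'))$, proving the in particular statement as well.

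The main obstacle is the per-component surgery step: I must guarantee that the replaced arcs simultaneously (i) fit together into a simple closed genus-separating curve (so $c_i\in\cG$), (ii) continue to satisfy the arc-configuration conditions of Lemma \ref{lem:sep_curve_int_multicurve} on every component of $S\setminus\alpha$ (so $c_i\in\Psi(\alpha)$), and (iii) achieve the desired intersection bound with $d$ on $Y_i$. This will require a careful, case-by-case analysis of the four configurations in Lemma \ref{lem:sep_curve_int_multicurve}, leveraging the rigidity these conditions impose on arcs in each component to ensure that canonical minimal-intersection representatives exist and can be assembled into a global curve.
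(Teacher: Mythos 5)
Your outline is essentially the paper's proof: the paper also fixes representatives, proves a per-component surgery claim (its Claim \ref{claim:adjust_inside_Y}) producing $c_Y\in\Psi(\alpha)$ agreeing with $c$ outside $Y$ and meeting $d$ at most twice inside $Y$, and then chains the steps using the anchoring curves of Lemma \ref{lem:anchoring_curve} together with Lemmas \ref{lem:adm_bdd_dist} and \ref{lem:int_bounds_dist_genus_sep} exactly as you describe. The step you defer as the ``main obstacle'' is precisely the content of that Claim, which occupies the bulk of the paper's argument. One caution: ``bigon reduction'' is not the right tool for the intersection bound --- the difficulty is not putting a fixed arc in minimal position with $d\cap Y$, but choosing, among all arcs inducing the same separation of genus and boundary components of $Y$ as $c\cap Y$ (so that the surgered curve is still genus-separating and in $\Psi(\alpha)$), one that meets $d\cap Y$ at most twice; the paper does this by explicit constructions in pairs of pants built from neighborhoods of $d\cap Y$ and the relevant boundary curves.
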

\begin{proof}
	Throughout the proof, we fix representatives of the isotopy classes of all of the curves involved such that $c$ and $d$ are each in minimal position with respect to $\alpha$, and such that no points of $c \cap d$ lie on $\alpha$.
	This allows us to give meaning to statements like ``$c$ and $d$ intersect on a component $Y$ of $S \setminus \alpha$'' even though there is no canonical minimal position for triples of isotopy classes of curves.
	
	Having fixed representatives, the proposition will follow by inductively applying the following claim.
	
	\begin{claim}\label{claim:adjust_inside_Y}
		If $Y$ is a component of $S \setminus \alpha$ on which $c$ and $d$ intersect, then there exists $c_Y \in \Psi(\alpha)$ such that $c_Y$ and $d$ intersect at most twice on $Y$ and $c_Y$ agrees with $c$ on $S \setminus Y$.
	\end{claim}

\begin{proof}
We will show that $c_Y$ can be obtained by replacing $c\cap Y$ with some well chosen arcs that intersect $d\cap Y$ at most twice.
		By construction, each of $c \cap Y$ and $d \cap Y$ is either a single arc connecting a boundary component to itself (which necessarily separates $Y$) or a pair of parallel arcs connecting different boundary components (and neither of these arcs can separate $Y$).  
		
		We first handle the case where $c \cap Y$ is a pair of parallel arcs. Let $c_1^1,c_1^2, c_2^1,c_2^2$ be the four 
		endpoints of $c\cap Y$ in $Y$ such that $c_i^1$ is joined by an arc of $c\cap Y$ to $c_i^2$.  If $d \cap Y$ is a single arc, then $c_i^1$ and $c_i^2$ are either on the same or different sides of $ d \cap Y$. In either case, we can connect each $c_i^1$ to its corresponding $c_i^2$ with an arc $\gamma_i$ such that $\gamma_1$ and $\gamma_2$ are parallel arcs and $i(\gamma_i,d) \leq 1$.
  If $d\cap Y$ is instead a pair of parallel arcs, let $\delta_1,\delta_2$ be the arcs of $d\cap Y$. Now $Y \setminus \delta_1$ is connected, but  $(Y \setminus \delta_1) \setminus \delta_2$ has two components. Thus $c_i^1$ and $c_i^2$ are either on the same or different sides of of $\delta_2$ in $Y\setminus \delta_1$.
  As before, this means we can connect each pair $c_i^1$ and $c_i^2$ with an arc $\gamma_i$ such that $\gamma_1$ and $\gamma_2$ are parallel, $i(\gamma_i,\delta_2) \leq 1$, and $i(\delta_1,\gamma_i) = 0$.
  In either case, let $c_Y$ be the curve obtained from $c$ be replacing $c\cap Y$ with $\gamma_1 \cup \gamma_2$. Since $c\cap Y$ and $c_Y \cap Y$ are both parallel arcs between the same boundary components of $Y$, we see that $S \setminus c$ is homeomorphic to $S \setminus c_Y$, and in particular $c_Y$ is genus-separating.
  By construction, it is also clear that $c_Y \in \Psi(\alpha)$, so we are done.
		
Now consider the case where $c \cap Y$ is a single arc. Since $c\cap Y$ separates $Y$, we orient $c$ and then label each boundary component and peripheral curve of $Y$ by ``left'' or ``right'' depending on which side of $c \cap Y$  it lies on. 
Let $g_{l}$ and $g_r$ be the genus of the left and right sides of $Y  \setminus (c\cap Y)$ respectively. 
We will find $c_Y$ by replacing $c\cap Y$ with an arc $\gamma$ that separates $Y$ into two components, one with genus $g_{l}$ and all the left curves of $Y$ and the other with genus $g_{r}$ and all the right curves of $Y$ (any such arc is essential on $Y$ since $c \cap Y$ is an essential arc and $\gamma$ will separate $Y$ in the same way as $c$).
This ensures $S \setminus c$ is homeomorphic to $S \setminus c_Y$, which makes $c_Y$ a genus-separating curve which is in $\Psi(\alpha)$ by construction. Let $c_1,c_2$ be the end points of $c \cap Y$ in $\partial Y$.

\begin{figure}[hb]
		\centering
            \def\svgwidth{2.5in}
\begingroup%
  \makeatletter%
  \providecommand\color[2][]{%
    \errmessage{(Inkscape) Color is used for the text in Inkscape, but the package 'color.sty' is not loaded}%
    \renewcommand\color[2][]{}%
  }%
  \providecommand\transparent[1]{%
    \errmessage{(Inkscape) Transparency is used (non-zero) for the text in Inkscape, but the package 'transparent.sty' is not loaded}%
    \renewcommand\transparent[1]{}%
  }%
  \providecommand\rotatebox[2]{#2}%
  \newcommand*\fsize{\dimexpr\f@size pt\relax}%
  \newcommand*\lineheight[1]{\fontsize{\fsize}{#1\fsize}\selectfont}%
  \ifx\svgwidth\undefined%
    \setlength{\unitlength}{187.02168964bp}%
    \ifx\svgscale\undefined%
      \relax%
    \else%
      \setlength{\unitlength}{\unitlength * \real{\svgscale}}%
    \fi%
  \else%
    \setlength{\unitlength}{\svgwidth}%
  \fi%
  \global\let\svgwidth\undefined%
  \global\let\svgscale\undefined%
  \makeatother%
  \begin{picture}(1,0.65421338)%
    \lineheight{1}%
    \setlength\tabcolsep{0pt}%
    \put(0,0){\includegraphics[width=\unitlength,page=1]{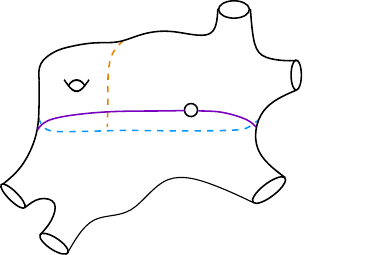}}%
    \put(0.68641907,0.30953572){\color[rgb]{0.47058824,0,0.74509804}\makebox(0,0)[lt]{\lineheight{1.25}\smash{\begin{tabular}[t]{l}$d\cap Y$\end{tabular}}}}%
    \put(0,0){\includegraphics[width=\unitlength,page=2]{left_right_part_1_svg-tex.pdf}}%
    \put(0.02607904,0.3578173){\color[rgb]{0.03137255,0.58823529,1}\makebox(0,0)[lt]{\lineheight{1.25}\smash{\begin{tabular}[t]{l}$p_1$\end{tabular}}}}%
    \put(0.02236132,0.29584867){\color[rgb]{0.03137255,0.58823529,1}\makebox(0,0)[lt]{\lineheight{1.25}\smash{\begin{tabular}[t]{l}$p_2$\end{tabular}}}}%
    \put(0.34862518,0.18841722){\color[rgb]{0.90196078,0.50980392,0}\makebox(0,0)[lt]{\lineheight{1.25}\smash{\begin{tabular}[t]{l}$\gamma_2$\end{tabular}}}}%
    \put(0.35758439,0.45080674){\color[rgb]{0.90196078,0.50980392,0}\makebox(0,0)[lt]{\lineheight{1.25}\smash{\begin{tabular}[t]{l}$\gamma_1$\end{tabular}}}}%
  \end{picture}%
\endgroup%

		 	\caption{The curves $p_1,p_2$ cobounding the pair of pants $P$. The arcs $\gamma_1$ and $\gamma_2$ cut $S \setminus P$ into ``left'' and ``right'' sides.}
		 	\label{fig:left_right_1}
\end{figure}
		
If $d \cap Y$ is a single arc, let $y$ be the curve of $\partial Y$ that $d$ intersects. The boundary of a neighborhood of $(d\cap Y) \cup y$ is a pair of curves $p_1,p_2$ that cobound a pair of pants $P$ with the boundary curve $y$. The complement $Y \setminus P$ has two components $Z_1,Z_2$ where $Z_i$ contains $p_i$ as a boundary curve; see Figure \ref{fig:left_right_1}.

Suppose that $c$ also intersects the boundary curve $y$. On each $Z_i$, we can draw an arc $\gamma_i$ with both endpoints on $p_i$ such that $\gamma_i$ separates $Z_i$ into two components, one that contains the left boundary components of $Y$ that also live on $Z_i$ and the other that contains the right boundary components. Moreover, we can choose the $\gamma_i$ such that the sum of the genera on the ``left'' sides of $Z_i \setminus \gamma_i$ is $g_l$ and the sum of the genera on the ``right'' sides is $g_r$.
The $\gamma_i$ also separate $p_i$ into ``left'' and ``right'' arcs.

We can now complete $\gamma_1\cup \gamma_2$ to an arc on all of $Y$ by adding arcs in the pair of pants $P$.
Select three disjoint arcs $a,b_1,b_2$ such that $a$ joins one endpoint of $\gamma_1$ to one endpoint of $\gamma_2$ and each $b_i$ joins the other endpoint of $\gamma_i$ to $c_i$ by an arc in $P$.
These arcs can be chosen such that $a$ intersects $d\cap Y$ once, $b_1$ is disjoint from $d \cap Y$, and $b_2$ intersects $d\cap Y$ at most once.
Moreover, we can choose these arcs such that the left arcs of $p_i$ are in one component of $P \setminus (a \cup b_1 \cup b_2)$ and the right arcs are in the other; see Figure \ref{fig:pants_arcs}. 
The desired arc $\gamma$ is the concatenation of $\gamma_1$, $\gamma_2$ and these arcs in $P$.

\begin{figure}[h]
		\centering
\def\svgwidth{5in}
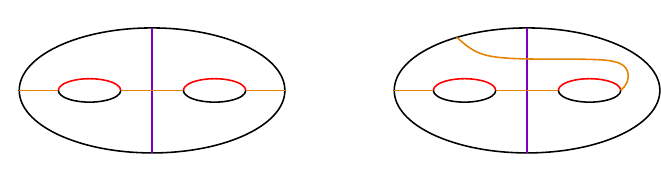
\caption{The arcs $a,b_1,b_2$ one must add in the pair of pants $P$ to complete $\gamma_1 \cup \gamma_2$ to $\gamma$.}
		 	\label{fig:pants_arcs}
\end{figure}

The case when $c$ does not intersect the boundary curve $y$ is similar. In this case $c$ intersects a different boundary curve $y' \in \partial Y$ and without loss of generality, $y' \subset Z_2$. We draw $\gamma_1$ as we did in the previous case, but instead of $\gamma_2$, we draw two arcs $\gamma_2^1, \gamma_2^2$ where $\gamma_2^1$ connects $c_1$ to $p_2$ and $\gamma_2^2$ connects $c_2$ to $p_2$ such that $\gamma_2^1\cup \gamma_2^2$ cuts $Z_2$ into two pieces with the appropriate boundary components and number of genus on the ``left'' and ``right'' sides.
We now finish $\gamma$ by joining each end point of $\gamma_2^i$ on $p_2$ to one of the endpoint of $\gamma_1$ on $p_1$ by arcs in $P$ that intersect $d\cap Y$ exactly once and separate the left and right arc of $p_1,p_2$ to the correct sides.
		
Now suppose $d \cap Y$ is a pair of parallel arcs between two boundary component $y_1,y_2 \in \partial Y$. There is a unique curve $p \subset Y$ that forms a pair of pants $P$ with $y_1$ and $y_2$ such that $P$ contains $d \cap Y$; this curve $p$ is found by taking the boundary of a neighborhood of $(d\cap Y) \cup y_1 \cup y_2$. Note that $Y \setminus P$ is a connected subsurface with the same genus as $Y$ but one fewer boundary. 

Assume first that both $y_1$ and $y_2$ are on the same side of $c \cap Y$; this implies $c$ is disjoint from $y_1$ and $y_2$.   Since $g(Y) = g(Y \setminus P)$ and $y_1,y_2$ are on the same side of $c\cap Y$, we can draw an arc $\gamma$ on $Y \setminus P$ with connects $c_1$ to $c_2$ and cuts $Y$ into two components, one with $g_l$ genus and all the ``left'' components of $\partial Y$ and one with $g_r$ genus and all the ``right'' components. 

Now assume that both $y_1$ and $y_2$ are on different sides of $c \cap Y$ (again this implies $c$ is disjoint from $y_1$ and $y_2$). Without loss of generality let $y_1$ be on the left side of $c$ and $y_2$ on the right. In this case we draw two arcs $\gamma_1,\gamma_2$ on $Y \setminus P$ such that
$\gamma_1$ connects $c_1$ to $p$, $\gamma_2$ connects $c_2$ to $p$, and $\gamma_1\cup \gamma_2$ separates $Y \setminus P$ into  ``left'' and ``right'' components where the left component has $g_l$ genus and all the  left curves of $Y$ except $y_1$ and the right component has $g_r$ genus and all the right curves except $y_2$.
We complete $\gamma_1\cup \gamma_2$ to the arc $\gamma$ on $Y$ by joining $\gamma_1$ to $\gamma_2$ by an arc in $P$ that separates $y_1$ and $y_2$ to the correct side of $Y \setminus \gamma$; this can be done such that the final arc has $i(\gamma, d \cap Y) \le 2$; see Figure \ref{fig:pants_arcs_2}.

\begin{figure}[ht]
		\centering
            \def\svgwidth{5in}
            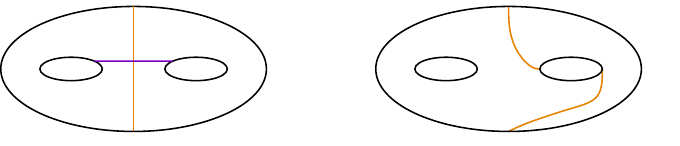
		
		 	\caption{ The arc drawn in $P$ to complete the arc $\gamma$. One the left, the case where $y_1$ and $y_2$ are on different sides of $c \cap Y$. On the right, the case where $c$ intersects $y_2$.}
		 	\label{fig:pants_arcs_2}
\end{figure}

Finally, assume that $c$ intersects exactly one of $y_1$ or $y_2$. Without loss of generality, assume $c$ intersects $y_2$ and $y_1$ is on the left side of $c$.
As in the previous cases, pick an arc $\gamma_0$ on $Y \setminus P$ that has both endpoints on $p$ and separates $Y \setminus P$ into two components where the ``left'' component has $g_l$ genus and contains all left curves of $Y$ except $y_1$ and the ``right'' component has $g_r$ genus and contains all right curves.
We complete $\gamma_0$ to an arc $\gamma$ on $Y$ by joining the endpoints of $\gamma_0$ to $c_1$ and $c_2$ by arcs in $P$ that separate $y_1$ to the ``left'' side of $Y \setminus \gamma$; this can be done  such that the final arc has $i(\gamma,d \cap Y) \leq 2$ ; see Figure \ref{fig:pants_arcs_2}.

We conclude by observing that in any of the three above cases, we have produced an arc $\gamma$ on $Y$ with the same topological type as $c \cap Y$ but that intersects $d$ at most twice on $Y$. Surgering $c$ along $\gamma$ as before we produce the desired curve $c_Y$.
\end{proof}

To prove  Proposition \ref{prop:psi_to_pi}, let $Y_1,\dots Y_k$ be the components of $S \setminus \alpha$ on which $c$ and $d$ intersect.
Applying Claim \ref{claim:adjust_inside_Y} to $Y_1$, we get a genus-separating curve $c_1 \in \Psi(\alpha)$ that intersects $d$ at most 4 times in $Y_1$ and agrees with $c$ outside of $Y_1$. By Lemma \ref{lem:anchoring_curve}, there is an admissible curve $a_1$ on $S \setminus Y_1$ that intersects $c$, and hence $c_1$, at most twice. 
Applying Lemma \ref{lem:adm_bdd_dist}, this implies that $a_1$ is $M$-close to both $\Pi(c)$ and $\Pi(c_1)$ in $\Cadm$ for some universal $M$. Hence,  $\Pi(c)$ and $\Pi(c_1)$ are $2M$-close to each other.
Repeating this argument, we produce a sequence of genus-separating curves $c = c_0,c_1,\dots, c_k$ in $\Psi(\alpha)$ such that $\Pi(c_i)$ and $\Pi(c_{i+1})$ are $2M$-close in $\Cadm$ and $i(c_k,d)$ is at most $2$ times the number of components of $S\setminus\alpha$, which is at most $|\chi(S)|$. 
The final curve $c_k$ is the desired curve $c'$.
	
	We now establish the requisite diameter bounds.
	Since the length of the sequence from $c$ to $c'$ is bounded by $|\chi(S)|$, each $\Pi(c_i)$ has uniformly bounded diameter in $\Cadm$, and each $\Pi(c_i)$ and $\Pi(c_{i+1})$ are  $2M$-close, we conclude that $\Pi(c) \cup \Pi(c')$ has uniformly bounded diameter. This gives (2).
	
	Finally, $c'$ and $d$ have uniformly bounded intersection number by construction, so by Lemma \ref{lem:int_bounds_dist_genus_sep} they have uniformly bounded distance in $\cG$.
	Since $\Pi$ is coarsely Lipschitz (Lemma \ref{lem:septoadm_Lip}), we see that $\Pi(c') \cup \Pi(d)$ also has uniformly bounded diameter. 
	The last statement of Proposition \ref{prop:psi_to_pi} now follows by the triangle inequality.
\end{proof}

We now show that the admissible curve graph $\Cadm$ is quasi-isometric to the model $\cK$. Since the inclusion $\Cadm \to \cK$ is simplicial and hence 1-Lipschitz, this statement is implied by the following:

\begin{proposition}\label{prop:comp_coarseLip}
	The map  $\Pi \circ \Psi \colon \cK \to \Cadm$ is a quasi-inverse to the inclusion $\Cadm \to \cK$.
\end{proposition}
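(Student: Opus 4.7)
The plan is to show that $\Pi\circ\Psi$ is a quasi-inverse of the $2$-Lipschitz inclusion $\iota\colon \Cadm\hookrightarrow\cK$ from Lemma~\ref{lem:inclusion_lipschtiz} by verifying three things: (a) $\Pi\circ\Psi$ is coarsely Lipschitz, (b) $(\Pi\circ\Psi)\circ\iota$ is uniformly close to the identity on $\Cadm$, and (c) $\iota\circ(\Pi\circ\Psi)$ is uniformly close to the identity on $\cK$. The uniform diameter bound on $\Pi\Psi(\alpha)$ from Proposition~\ref{prop:psi_to_pi} will be invoked throughout, and in particular reduces (b) and the bounded-union check in (a) to the existence of a single distinguished element in each target image.

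Item (b) is the quickest. For any $a\in\Cadm$, the subsurface $S\setminus a$ has genus $g-1\ge 2$, so it contains a genus-separating curve $c$ of $S$ disjoint from $a$. Such a $c$ lies in $\Psi(a)$ (trivially nice on the single component $S\setminus a$), and by definition of $\Pi$ we have $a\in\Pi(c)\subseteq \Pi\Psi(a)$. Combined with the diameter bound, every element of $\Pi\Psi(a)$ is within universal distance of $a$. For item (a), given an edge $(\alpha,\beta)$ of $\cK$, the strategy is to exhibit a common $c\in\Psi(\alpha)\cap\Psi(\beta)$; then $\Pi(c)$ lies in both $\Pi\Psi(\alpha)$ and $\Pi\Psi(\beta)$, and we conclude as before. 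For an add/remove edge $\beta=\alpha\cup\{e\}$, one first attempts to take a genus-separating $c$ disjoint from the component $Y$ of $S\setminus\alpha$ containing $e$; when some other component of $S\setminus\alpha$ carries enough genus, such $c$ exists, and the remaining case is handled by applying Lemma~\ref{lem:sep_curve_int_multicurve} to $\beta$ while ensuring that the resulting cycle in the dual graph lies only in components not split by $e$, so that $c$ remains nice for $\alpha$. Flip edges are handled analogously.

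Item (c) is the main obstacle. Given $\alpha\in\cK$, fix any $c\in\Psi(\alpha)$; Lemma~\ref{lem:sep_curve_int_multicurve} bounds $i(c,\alpha)$ by $2|\chi(S)|$, a universal constant. Since $g(S)\ge 3$, one side $V$ of $S\setminus c$ has genus at least $2$, and choosing $a\in\Pi(c)\cap V$ makes $a\cup c$ a vertex of $\cK$ (both complementary components carry genus, hence admissible curves by Lemma~\ref{lem:genusadm}), yielding $d_\cK(a,c)\le 2$. It remains to bound $d_\cK(c,\alpha)$ uniformly. Guided by the fine control $c\in\Psi(\alpha)$ gives over $c\cap Y$ on each component $Y$ of $S\setminus\alpha$ (at most two arcs touching at most two boundary curves of $Y$), one builds an explicit bounded-length path in $\cK$ from $c$ to $\alpha$ via a sequence of add/remove and flip moves that successively simplify the intersection pattern. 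The principal delicacy lies in checking that each intermediate multicurve has no witness component; Lemma~\ref{lem:admwitnesses} together with the positive-genus components guaranteed by $g(S)\ge 3$ supplies the necessary flexibility, but the construction proceeds case-by-case according to the possible local pictures of $c\cap\alpha$.
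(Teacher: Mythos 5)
Your outline is recognizably in the spirit of the paper, and item (b) is exactly the paper's argument (the curve $c$ exists since $g(S)\ge 3$, then $c\in\Psi(a)$ and $a\in\Pi(c)$). However, there are two substantive gaps.

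First, the edge case of item (a) is under-specified. For add/remove edges the paper observes a clean containment $\Psi(\alpha')\subseteq\Psi(\alpha)$ when $\alpha'=\alpha\cup\{e\}$, which immediately gives $\Pi\Psi(\alpha')\subseteq\Pi\Psi(\alpha)$. For flip edges the paper reduces to the add/remove case except in one genuinely delicate sub-case: when the flip happens in a complexity-one component $Y$ that \emph{is} a witness. By Lemma~\ref{lem:admwitnesses} this forces $g=3$ and $Y$ to be a four-holed sphere all of whose boundary curves are non-separating and non-peripheral, and the paper handles it with a Whitehead-move argument on the dual graph to exhibit a common element of $\Psi(\alpha)\cap\Psi(\alpha')$. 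Your phrase ``flip edges are handled analogously'' skips precisely this sub-case, which is the only place the argument is not routine; and your alternate strategy for add/remove edges (``take a genus-separating $c$ disjoint from $Y$'') will not always produce a $c\in\Psi(\alpha)$, since for a curve contained in a single complementary component the component must itself have genus, which need not hold.

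Second, and more seriously, item (c) is a hard detour that you set up but do not actually resolve, and which the paper does not need at all. Under the paper's definition of quasi-isometry, one only needs $\Pi\circ\Psi$ to be coarsely Lipschitz and $d_{\Cadm}(a,\Pi\Psi(\iota(a)))$ uniformly bounded; both follow from (a) and (b). Even under the symmetric definition, (c) follows for free from (a), (b), and the cheap fact that $\iota$ is coarsely surjective: any $\alpha\in\cK$ has a complementary component of positive genus, which by Lemma~\ref{lem:genusadm} contains an admissible curve $a$, and then $\alpha,\alpha\cup a, a$ is a length-$2$ path in $\cK$; combining this with the Lipschitz constants of $\iota$ and $\Pi\circ\Psi$ immediately bounds $d_\cK(\alpha,\iota\Pi\Psi(\alpha))$. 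Your proposed direct route — building an explicit bounded-length path in $\cK$ from $c$ to $\alpha$ by surgery, since $i(c,\alpha)\le 2|\chi(S)|$ — runs exactly into the obstruction the paper flags at the start of Section~\ref{sec:Cadm_qi_model}: $\cK$ carries no cobounded $\Mod(S)$ action and $\FMod(S,\phi)$ has infinitely many orbits of witnesses, so a change-of-coordinates argument bounding distance by intersection number is unavailable, and each surgery step must be shown to avoid creating a witness complementary component. You explicitly identify this as ``the principal delicacy'' and leave it as a case-by-case construction; as written this is a gap, and it is one you can sidestep entirely via the easy coarse-surjectivity argument above.
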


\begin{proof}
	We first check that for all $a \in \Cadm$, the image $\Pi \circ \Psi(a)$ is uniformly close to $a$ in $\Cadm$. 
	Since $\genus(S) \geq 3$, there must exists a genus-separating curve $c$ disjoint from $a$. Hence $c \in \Psi(a)$ and $a \in \Pi(c)$. Thus $a \in \Pi\circ\Psi(a)$ as desired.
	
	We now show that $\Pi \circ \Psi$ is coarsely Lipschitz; this will complete the proof of  Proposition \ref{prop:comp_coarseLip}. We have already shown in Proposition \ref{prop:psi_to_pi} that the image of every vertex of $\cK$ has uniformly bounded diameter, so it suffices to do the same for every edge.
	That is, if $\alpha,\alpha' \in \cK$ are two vertices joined by an edge, then we must show that
	\[\diam(\Pi\circ\Psi(\alpha) \cup \Pi \circ \Psi(\alpha'))\]
	is uniformly bounded.
	
	If the edge from $\alpha$ to $\alpha'$ corresponds to adding a curve to $\alpha$ to achieve $\alpha'$, then $\Psi(\alpha') \subseteq \Psi(\alpha)$ by definition. This implies $\Pi\circ \Psi(\alpha') \subseteq \Pi \circ \Psi(\alpha)$; the desired diameter bound then follows from Proposition \ref{prop:psi_to_pi}.
	
	Now assume the edge from $\alpha$ to $\alpha'$ corresponds to a flip move. 
	Let $x \in \alpha$ and $x' \in \alpha'$ such that $x$ is flipped to $x'$.
	If $x$ and $x'$ are disjoint, then $\alpha \cup x'$ is a vertex of $\cK$ as adding curves to a vertex of $\cK$ always produces a new vertex of $\cK$. Now $\alpha \cup x'$ is joined by an  edge to both $\alpha$ and $\alpha'$ as removing $x'$ produces $\alpha$ and removing $x$ produces $\alpha'$. The desired bound
	now follows from the proceeding paragraph about add/remove edges.
	
	If $x$ and $x'$ are not disjoint, then the component $Y$ of $S \setminus (\alpha \setminus x)$ that contains $x$ has $\xi(Y) =1$. If $Y$ is not a witness, then $\alpha \setminus x = \alpha'  \setminus x'$  is a vertex of $\cK$ that is joined by an add/remove-edge to both $\alpha$ and $\alpha'$.  As before this establishes the bound. 
	
	If $Y$ is a witness, then  Lemma \ref{lem:admwitnesses} requires $S \setminus Y$ has no genus. Since $\xi(Y) = 1$ and $\genus(S) \ge 3$, this is only possible if $\genus(S) = 3$  and $Y$ is a 4-holed sphere where every curve in $\partial Y$ is non-peripheral and non-separating on $S$. In this case, $x$ and $x'$ intersect twice in the 4-holed sphere $Y$.
 Thus, flipping $\alpha$ to  $\alpha'$ corresponds to moving from the dual graph $D$ for $\alpha$ to the dual graph $D'$ for $\alpha'$ by performing a ``Whitehead move'' where one collapses the edge of $D$ dual to  $x$ and then expands an edge dual to $x'$; see Figure \ref{fig:4-holed_sphere}. Since no curves in $\partial Y$ are separating or peripheral on $S$, the dual graph $D$ contains a cycle $C$ with an edge dual to $x$ such that after performing the Whitehead move to produce $D'$, the cycle $C$ becomes a cycle $C'$ of $D'$ that does not include the edge dual to $x'$.  
There is therefore a genus-separating curve $c$ built from $C$ that will be disjoint from $x'$, which implies $c \in \Psi(\alpha) \cap \Psi(\alpha')$.
Since $\Pi(c)$  will then be contained in  $\Pi \circ \Psi(\alpha) \cap \Pi \circ \Psi(\alpha')$, we have that $\diam(\Pi \circ \Psi(\alpha) \cup \Pi \circ \Psi(\alpha'))$ is uniformly bounded by Proposition \ref{prop:psi_to_pi}. 
\end{proof}

\begin{figure}[ht]
    \centering
    \def\svgwidth{5in}
    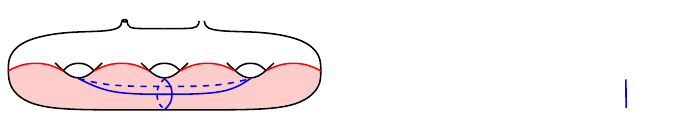
    \caption{One the left, the subsurface $Y$ where $x$ is flipped to $x'$. One the right, the Whitehead move on the dual graph corresponding to flipping $x$ to $x'$. The cycle $C$ is sent to the cycle $C'$ under this move.
    }
    \label{fig:4-holed_sphere}
\end{figure}

\begin{proof}[Proof of Theorem \ref{mainthm:CadmHHS}]
Lemma \ref{lem:inclusion_lipschtiz} and Proposition \ref{prop:comp_coarseLip} together show that $\Cadm$ is quasi-isometric to the hierarchically hyperbolic space $\cK$. Since hierarchical hyperbolicity  can be passed along quasi-isometries, $\Cadm$ is also hierarchically hyperbolic.

As Gromov hyperbolicity is also a quasi-isometry invariant, it suffices to  to verify that $\cK$ is not Gromov hyperbolic. By Corollary \ref{cor:hyp_iff_no_disjoint}, $\cK$ is not Gromov hyperbolic if and only if $\Cadm$ has a pair of disjoint witnesses.
Let $\Delta_1,\dots,\Delta_n$ be peripheral curves encircling the punctures of $S$. Without loss of generality, assume $\phi(\Delta_i) \geq 0$ for $i \in \{1,\dots, k\}$ and $\phi(\Delta_i) <0$ for $i \in \{k+1,\dots,n\}$. 
Let $\alpha$ be a multicurve consisting of $g+1$ non-separating curves $a_1, \ldots, a_{g+1}$ such that $S \setminus \alpha$ is a pair of genus zero subsurfaces, $W^+$ and $W^-$, where $W^+$ contains $\Delta_1,\dots,\Delta_k$ and $W^-$ contains $\Delta_{k+1},\dots, \Delta_b$; see Figure \ref{fig:dis_witness}. 
Orient each curve of $\alpha$ such that $W^+$ is to the left. 
    
    \begin{figure}[h]
    \centering
    \def\svgwidth{4in}
\begingroup%
  \makeatletter%
  \providecommand\color[2][]{%
    \errmessage{(Inkscape) Color is used for the text in Inkscape, but the package 'color.sty' is not loaded}%
    \renewcommand\color[2][]{}%
  }%
  \providecommand\transparent[1]{%
    \errmessage{(Inkscape) Transparency is used (non-zero) for the text in Inkscape, but the package 'transparent.sty' is not loaded}%
    \renewcommand\transparent[1]{}%
  }%
  \providecommand\rotatebox[2]{#2}%
  \newcommand*\fsize{\dimexpr\f@size pt\relax}%
  \newcommand*\lineheight[1]{\fontsize{\fsize}{#1\fsize}\selectfont}%
  \ifx\svgwidth\undefined%
    \setlength{\unitlength}{229.94589248bp}%
    \ifx\svgscale\undefined%
      \relax%
    \else%
      \setlength{\unitlength}{\unitlength * \real{\svgscale}}%
    \fi%
  \else%
    \setlength{\unitlength}{\svgwidth}%
  \fi%
  \global\let\svgwidth\undefined%
  \global\let\svgscale\undefined%
  \makeatother%
  \begin{picture}(1,0.20575752)%
    \lineheight{1}%
    \setlength\tabcolsep{0pt}%
    \put(0,0){\includegraphics[width=\unitlength,page=1]{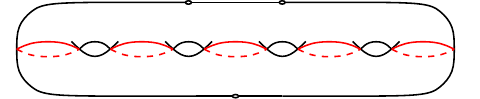}}%
    \put(-0.00433186,0.09395228){\color[rgb]{1,0,0}\makebox(0,0)[lt]{\lineheight{1.25}\smash{\begin{tabular}[t]{l}$\alpha$\end{tabular}}}}%
    \put(0.79887438,0.152562){\makebox(0,0)[lt]{\lineheight{1.25}\smash{\begin{tabular}[t]{l}$W^+$\end{tabular}}}}%
    \put(0.79887976,0.03577642){\makebox(0,0)[lt]{\lineheight{1.25}\smash{\begin{tabular}[t]{l}$W^-$\end{tabular}}}}%
  \end{picture}%
\endgroup%

    \caption{The multicurve $\alpha$ whose complement is a  pair of witnesses for $\Cadm$.}
    \label{fig:dis_witness}
\end{figure}

By homological coherence (Lemma \ref{lemma:HJ}.\ref{item:HC}), we have that
for any framing $\psi$ of $S$,
\begin{equation}\label{eqn:HCW+}
\sum_{i=1}^{g+1} x_i + \sum_{j=1}^k \psi(\Delta_j) = 1-g - k
\end{equation}
where $x_i = \psi(a_i)$. 
On the other hand, we know from Lemma \ref{lem:admwitnesses} that $W^+$ contains a (non-peripheral) $\psi$-admissible curve if and only if there is some subset $\mathcal C$ of $\alpha \cup \Delta_1 \cup \ldots \cup \Delta_k$ such that
\begin{equation}\label{eqn:W+adm}
\sum_{c \in \mathcal C} \psi(c) = 1-|\mathcal C|.
\end{equation}
A similar condition tells us if $W^-$ contains any non-peripheral admissible curves.

Now since $g$ of the curves of $\alpha$ are homologically independent, we see that for any $(x_1, \ldots, x_{g+1}) \in \mathbb{Z}^{g+1}$ such that \eqref{eqn:HCW+} holds, there is a framing $\psi$ of $S$ such that $\psi(a_i) = x_i$ for all $i$ and $\psi(\Delta_j) = \phi(\Delta_j)$ for each $j\in \{1,\dots,n\}$ (see \cite[Remark 2.7]{CS}).
Moreover, we can choose $x_i$ not to satisfy \eqref{eqn:W+adm} for any subset $\mathcal C$ of $\partial W^+$ or the corresponding equations for $W^-$ since these all linearly independent from \eqref{eqn:HCW+}.
Thus $W^+$ and $W^-$ are a pair of disjoint witnesses for $\Cadm(S,\psi)$.

Set $K = \sum|\phi(\Delta_j)|$.
The choices in the previous paragraph can all be made explicitly by choosing $x_1, \ldots, x_g$ all to be positive and larger than $2K$ and such that their differences are all larger than $2K$.
Set $x_{g+1}$ to satisfy \eqref{eqn:HCW+}, so it will necessarily be very negative.
Then for any subset $\mathcal C$ of $\alpha \cup \Delta_1, \ldots, \Delta_k$, the left-hand side of \eqref{eqn:W+adm} has magnitude larger than $K$ unless it contains all of $\alpha$.
In this case, any curve separating off (a subset of) the $\Delta_j$ appearing in $W^+$ must have negative winding number, which is in particular not zero. Thus $W^+$ contains no admissible curves, so $W^-$ is a witness.
The argument to show $W^+$ is a witness is completely analogous.

Finally, we note that in the case that $\phi$ is of spin type, we can also choose $\psi$ to have the same $\Arf$ invariant as $\phi$ by stipulating the winding numbers on the completion of $a_1, \ldots, a_g$ to a GSB. 
Theorem \ref{thm:classframed} now provides $f \in \Mod(S)$ such that $\phi = f(\psi)$, and thus $f(W^+)$ and $f(W^-)$ are the desired pair of disjoint witnesses for $\Cadm(S,\phi)$.
\end{proof}

\section{Curve graphs for strata}\label{sec:bdrycx}
In this section we define a number of analogous graphs for (bordifications of) strata. We start by recalling some of the results of \cite{CS} on the relationship between strata, markings, and framed mapping class groups and discussing how the curve complex captures the intersection pattern of the boundary of a bordification of Teichm{\"u}ller space.
Unlike the classical case, it is much more subtle to determine exactly which nodal surfaces can appear in the boundary, leading us to define a number of different graphs that we will eventually prove are all quasi-isometric (Corollary \ref{cor:variants_qi}).

\subsection{Framings and strata}\label{subsec:stratabasics}
A {\em stratum} of abelian differentials is a (quasi-projective) subvariety of the bundle of holomorphic abelian differentials $\Omega \mathcal M_g$ on genus $g$ Riemann surfaces defined by conditioning the number and order of zeros.
More explicitly, given any partition $\sing = (k_1, \ldots, k_n)$ of $2g-2$ into positive integers, we let 
$\Omega \mathcal M_g(\sing) \subset \Omega \mathcal M_g$ denote the stratum parametrizing pairs $(X, \omega)$ where $X$ is a Riemann surface and $\omega$ is a holomorphic 1-form on $X$ with $n$ distinct zeros of orders $k_1, \ldots, k_n$.
Since a holomorphic 1-form is entirely determined (up to global scaling by $\mathbb{C}^*$) by the order and position of its zeros, any stratum can be thought of as a $\mathbb{C}^*$ bundle over a subvariety of $\cM_{g,n}$ (after taking a manifold cover).
In the sequel, we will freely conflate a stratum and its image in $\cM_{g,n}$; we trust this will not cause any confusion.

Let $\Omega \cT_{g,n}(\sing)$ denote the full preimage of the stratum $\Omega \mathcal M_g(\sing)$ inside of $\cT_{g,n}$.
In order to understand its connected components, one needs to understand which mapping classes can be realized inside a stratum, that is, one needs to understand the image of the map 
\[\rho: \pi_1(\cH) \to \pi_1(\cM_{g,n}) \cong \Mod(S_{g,n})\]
of orbifold fundamental groups, where $\cH$ is any stratum component.
When $\cH$ is hyperelliptic, it is not hard to see that the image of $\rho$ is (conjugate to) a hyperelliptic mapping class group \cite{LM, strata1}.
The main theorem of \cite{CS} characterizes the image of $\rho$ for non-hyperelliptic components.

Observe first that a differential $\omega$ has an associated horizontal vector field that does not vanish outside the zeros of $\omega$; we denote this by $1/\omega$.

\begin{theorem}[Theorem A of \cite{CS}]\label{thm:CS}
	Let $\cH$ be a non-hyperelliptic stratum component and suppose that $g \ge 5$. Then the image of $\rho$ is (conjugate to) the framed mapping class group associated to the framing $1/\omega$.
\end{theorem}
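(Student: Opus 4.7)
The plan is to prove the two containments $\operatorname{im}(\rho) \subseteq \FMod(S, 1/\omega)$ and $\operatorname{im}(\rho) \supseteq \FMod(S, 1/\omega)$ separately. After fixing a basepoint $(X_0, \omega_0) \in \cH$ and a marking identifying $X_0$ with $S_{g,n}$ (punctured at the zeros), the horizontal vector field $1/\omega_0$ gives our framing $\phi$. The first containment is essentially formal: the framing $1/\omega$ varies continuously (indeed real-analytically) over $\cH$ since the zeros vary smoothly and away from the zeros $\omega$ is nonvanishing. Parallel transporting along any loop in $\cH$ therefore returns the same framing up to isotopy, so the induced mapping class preserves $\phi$. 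This shows $\operatorname{im}(\rho) \subseteq \FMod(S, \phi)$.

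The hard direction is to produce enough monodromy to exhaust $\FMod(S, \phi)$. First I would invoke a generation result for $\FMod(S, \phi)$: for $g \geq 5$, the framed mapping class group is generated (up to finite index) by Dehn twists $T_a$ in admissible curves. This is plausible from twist-linearity (Lemma \ref{lemma:HJ}) which guarantees admissible twists lie in $\FMod(S, \phi)$, together with a careful argument using GSBs and the change-of-coordinates principle of Proposition \ref{prop:transitiveadm} to show these twists suffice. The $g \geq 5$ hypothesis would enter here to provide enough room for surface-topological manipulations, and it should also play a role in controlling the finite-index issue --- presumably by combining with a ``capping'' or ``stabilization'' argument to identify a finite-index subgroup with a known one (e.g.\ comparing with the Torelli-like subgroup preserving a spin or Arf structure, or with the Johnson kernel in genus $0$/$1$ boundary cases).

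Second, I would show that every admissible twist $T_a$ is realized by a loop in $\cH$. The key geometric input is that admissible curves are exactly the core curves of horizontal cylinders for differentials in $\cH$ (as stated in the remark following Theorem~\ref{mainthm:CadmHHS}). Given an admissible curve $a$, I would produce a differential $(X_a, \omega_a) \in \cH$ with the marked surface identified with $S$ such that $a$ is the core curve of a horizontal cylinder $C_a$; this is a realization statement proved by surgery/plumbing constructions on flat surfaces inside the stratum. The Dehn twist monodromy is then produced by deforming $\omega_a$ through the family $e^{i\theta}\omega_a$ restricted to $C_a$ (shearing the cylinder), or more concretely by a loop in period coordinates that winds the modulus of $C_a$ around once. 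Returning this to the basepoint and reading off the resulting mapping class gives $T_a$.

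The main obstacle will be the realization step: one must construct, for \emph{every} admissible curve $a \in \Cadm(S, \phi)$, a flat surface in the specified component $\cH$ with $a$ as a cylinder core. Existence of \emph{some} differential with a given admissible curve is standard, but ensuring it lies in the correct non-hyperelliptic component (rather than e.g.\ a different spin/parity component) requires understanding how the connected components of strata are indexed and how this indexing interacts with the framing. Once a single such realization is secured for one curve in each $\FMod(S, \phi)$-orbit of admissible curves, the transitivity of the $\FMod(S, \phi)$-action (Proposition \ref{prop:transitiveadm}) together with the first containment would propagate the realization to all admissible curves, closing the loop.
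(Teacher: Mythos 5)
The paper does not prove this theorem; it is cited verbatim as Theorem A of \cite{CS}, so there is no ``paper's own proof'' to compare against. With that caveat, your outline does reproduce the architecture of the argument in \cite{CS}: split into two containments, prove the easy one by continuity of the framing over the stratum, and prove the hard one by combining a generation theorem for $\FMod(S,\phi)$ with a realization of admissible twists in the monodromy via cylinder shears.

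Two substantive remarks. First, the generation theorem is vastly understated. You present it as ``plausible from twist-linearity together with a careful argument using GSBs and the change-of-coordinates principle,'' but this is the main technical content of \cite{CS}: one has to produce an explicit finite generating configuration of admissible curves, prove connectivity of an auxiliary complex of such configurations, and run an inductive Putman/Salter-style argument, with the $g\ge 5$ hypothesis needed to make the induction start and the subsurface Arf bookkeeping close. Relatedly, the hedged ``generated up to finite index'' (taken from this paper's loose paraphrase) would leave a genuine gap in your argument --- realizing all admissible twists would then only give a finite-index subgroup of $\FMod(S,\phi)$ inside $\operatorname{im}(\rho)$, and your proposed ``capping/stabilization'' fix is not concrete enough to close it. The reason the argument in \cite{CS} actually closes is that for $g\ge 5$ they prove \emph{exact} generation by admissible twists, not merely finite-index generation. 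Second, a small inaccuracy: the loop $\theta\mapsto e^{i\theta}\omega_a$ is a global rotation of the flat structure and does not produce the Dehn twist; the correct loop is a horizontal shear supported on the cylinder $C_a$ (equivalently, a loop in period coordinates that advances the twist parameter of $C_a$ by one full circumference while holding the rest of the periods fixed), whose monodromy is $T_a$. The surrounding steps --- that admissible curves are exactly cylinder core curves, that a single realization in each $\FMod(S,\phi)$-orbit propagates by Proposition~\ref{prop:transitiveadm} together with the easy containment --- are all correctly identified.
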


We therefore introduce the following notation:

\begin{definition}\label{def:markH}
	Suppose that $\cH$ is a non-hyperelliptic stratum component and let $(X, \omega) \in \cH$. Choose an arbitrary marking $f: S_{g,n} \to X$ and let $\phi$ denote the framing corresponding to the vector field $1/f^*\omega$.
	Then we use $\markH$ to denote the subset of $\Omega \cT_{g,n}(\sing)$ parametrizing those marked differentials $(X', \omega', f')$ such $(X', \omega') \in \cH$ and $1/(f')^*(\omega')$ is isotopic to $\phi$.
\end{definition}

By Theorem \ref{thm:CS}, if $g \ge 5$ then $\cH_\phi$ is just a specified connected component of $\Omega \cT_{g,n}(\sing)$. The reader should think of $\markH$ this way; Definition \ref{def:markH} is written as it is only so that we have something that works for all $g \ge 3$.

The Theorem also reveals a relationship between cylinders and admissible curves.
Integrating $\omega$ induces a singular flat metric on $X$, and the core curve of any embedded Euclidean cylinder has constant slope with respect to the horizontal vector field $1/\omega$, hence winding number 0. Moreover, since the cylinder has nonzero period with respect to a holomorphic 1-form, the core curve must necessarily be non-separating by Stokes's theorem. Thus the core curve is admissible.
Transitivity of the $\FMod(S, \phi)$ action on admissible curves (see Proposition \ref{prop:transitiveadm}) now implies that every admissible curve is realized as a cylinder on some differential in $\markH$ \cite[Corollary 1.2]{CS}.

In Section \ref{sec:transgeom} below, we will use similar transitivity arguments to understand which multicurves can be pinched in the boundary of $\markH$.

\subsection{The curve complex as a nerve}\label{subsec:C(S) as nerve}
Recall that the Deligne--Mumford compactification $\DMcomp$ of the moduli space of Riemann surfaces is obtained by adjoining boundary strata corresponding to (stable) nodal surfaces to $\mathcal M_{g,n}$. Equivalently, it can also be obtained by taking the completion of $\mathcal M_{g,n}$ with respect to the Weil--Petersson metric.
A sequence of surfaces $X_i$ degenerates to the boundary if the (extremal or hyperbolic) length of an essential simple closed curve goes to 0; if $\gamma$ is a topological type of multicurve, then we use $\cM_{g,n}(\gamma)$ to denote the boundary stratum where $\gamma$ is pinched.

One can do a similar thing at the level of Teichm{\"u}ller space.
For any multicurve $\gamma$, let $\cT_{g,n}(\gamma)$ denote the Teichm{\"u}ller space of the open subsurface $S \setminus \gamma$.
The {\em augmented Teichm{\"u}ller space} $\BATS$ is then obtained by adjoining all possible $\cT_{g,n}(\gamma)$ to $\cT_{g,n}$, marking $S \setminus \gamma$ by the subsurface complementary to $\gamma$. 
Equivalently, $\BATS$ is also the Weil--Petersson metric completion of $\cT_{g,n}$.
Points in $\cT_{g,n}(\gamma)$ can be obtained as geometric limits of non-degenerate structures: for example, if $\cT_{g,n} \ni X_i \to X_\infty \in\cT_{g,n}(\gamma)$ then the hyperbolic length of $\gamma$ on $X_i$ goes to $0$, so the $X_i$ develop a long collar that limits to a pair of cusps in $X_\infty$.

We direct the reader to \cite{HK_DM} and its extensive bibliography for a thorough discussion of the history and construction of these spaces.

\begin{remark}
	It is useful (though not quite correct) to think of $\BATS$ as covering $\DMcomp$.
	There is a surjective map $\BATS \to \DMcomp$, which when restricted to any stratum $\cT_{g,n}(\gamma)$ is a covering onto $\mathcal M_{g,n}(\gamma)$, but the overall map is not a covering. This is because $\cT_{g,n}$ is infinitely ramified around the boundary stratum $\cT_{g,n}(\gamma)$ (and likewise $\cT_{g,n}(\gamma)$ is infinitely ramified around its boundary, etc).
\end{remark}

The collar lemma implies that the nerve of the (closures of the) top-dimensional boundary strata of $\BATS$ is exactly given by the usual curve complex $\scrC(S)$ (with vertices given by simple closed curves and simplices given by disjointness).
The 1-skeleton of the barycentric subdivision of the curve complex is the {\em multicurve graph}, which has a vertex for each (simple) multicurve on $S$ and whose edges are given by inclusion: $\gamma$ is connected to $\delta$ if and only if $\gamma \subset \delta$ or $\delta \subset \gamma$. Equivalently, the multicurve graph is the nerve of the (closures of) all boundary strata of $\BATS$.

\subsection{Multi-scale differentials and level splittings}\label{subsec:multiscale}
We now perform a similar construction for (marked) strata of abelian differentials.
Our discussion will be made more complicated by a number of factors, one of which is that some curves on $S$ cannot be pinched by themselves (since any abelian differential is in particular a cohomology class).
In fact, if $\barmarkH \cap \cT_{g,n}(\gamma) \neq \emptyset$ and $\gamma$ is a single simple closed curve, then it must either be admissible or separating. See Section \ref{subsec:divisorial} just below.

\begin{example}[Pinching a multicurve but not its components]
Consider the surface shown in Figure \ref{fig:hompinch}. In this example, curves $\alpha$ and $\beta$ are homologous and so their periods must be equal. Crushing the right-hand torus to have 0 area degenerates into the boundary stratum $\barmarkH \cap \cT_{g,n}(\alpha \cup \beta)$. However, it is impossible to pinch either $\alpha$ or $\beta$ individually while remaining in $\barmarkH$.
\end{example}

\begin{figure}
    \centering
    \def\svgwidth{0.75\linewidth}
    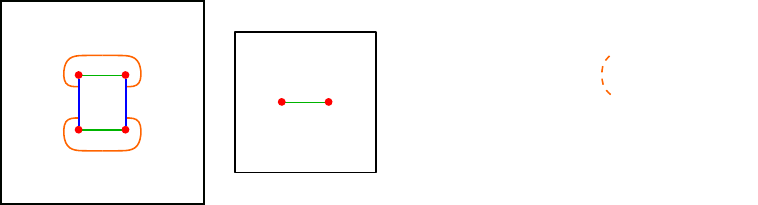 
    \caption{A 2-level splitting consisting of two homologous curves. Neither $\alpha$ nor $\beta$ is admissible, and neither defines a level splitting by itself, so by Theorem \ref{thm:bdry are adm + level} below, $\barmarkH$ does not meet $\cT_{g,n}(\alpha)$ or $\cT_{g,n}(\beta)$.}
    \label{fig:hompinch}
\end{figure}

Specifying $\markH \subset \cT_{g,n}$ as in Definition \ref{def:markH}, let $\barmarkH \subset \BATS$ denote its closure (equivalently, its Weil--Petersson metric completion).
Analogous to the multicurve graph, we now define a graph capturing the pattern of intersections of $\barmarkH$ with the boundary strata of $\BATS$:

\begin{definition}\label{def:bdrycx}
Let $\scrC(\barmarkH)$ be the graph whose vertices are multicurves $\gamma$ such that $\barmarkH \cap \cT_{g,n}(\gamma)$ is nonempty and whose edges are given by inclusion.
\end{definition} 

Exactly which multicurves appear as vertices of $\scrC(\barmarkH)$ is a very intricate question, and is related to subtle properties of a certain compactification of $\cH$.
Let $\overline{\cH}$ be the closure of $\cH$ inside of $\DMcomp$ (without markings). The structure of its boundary is determined by the so-called ``incidence variety compactification'' (IVC) of $\cH$ \cite{IVC}. A point in the IVC consists of a ``level graph'' and a ``twisted differential'' compatible with the level graph; forgetting the differential and remembering only the underlying complex structure yields a surjective map from the IVC onto $\overline{\cH}$ \cite[Corollary 1.4]{IVC}.
It turns out that the IVC is highly singular, and in \cite{multiscale}, the IVC is refined into the moduli space of ``multi-scale differentials'' $\Xi \cH$ which has nicer geometric properties (e.g., its boundary is a normal crossing divisor).
A multi-scale differential is encoded by three pieces of data: an ``enhanced level graph,'' a twisted differential compatible with the level graph and the enhancement, and a ``prong matching.''

We will not give precise definitions of these compactifications here, and direct the reader to the original papers (especially Section 5.1 of \cite{multiscale} and Section 3 of \cite{CMZEuler}).
Instead, we record some of the relevant combinatorial data using our terminology of multicurves and winding numbers. We keep the numbering conventions of \cite{multiscale}.

\begin{definition}
Let $S=S_{g,n}$ and let $\phi$ be a framing of $S$.
An {\em $N$-level splitting} is an oriented multicurve $\vec{\beta}$ together with a partition of $S \setminus \beta$ into (nonempty, but possibly disconnected) subsurfaces $Y_0, \ldots, Y_{-N+1}$ such that:
\begin{itemize}
\item The winding number $\phi(b)$ is negative for every curve $b \subset \vec{\beta}$.
\item Let $b$ be a curve of $\vec{\beta}$. Then if the subsurfaces it sees on its left and right are $Y_i$ and $Y_j$, respectively, then $i > j$.

\end{itemize}
A multicurve $\beta$ is an {\em $N$-level multicurve} if $\beta$ can be oriented and $S \setminus \beta$ can be partitioned to yield an level splitting with $N$ levels.
\end{definition}

\begin{figure}[h]
    \centering
    \def\svgwidth{0.9\linewidth}
    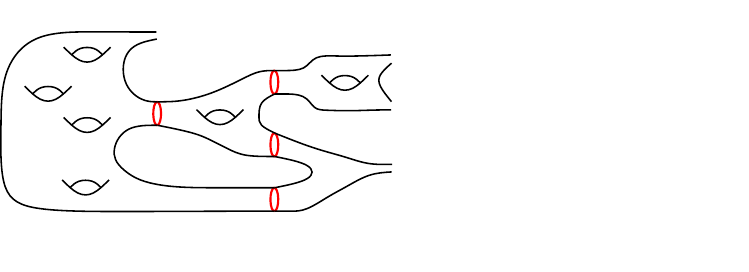
    \caption{A 3-level multicurve and corresponding enhanced level graph.}
    \label{fig:level}
\end{figure}

\begin{remark}
Homological coherence implies that a 2-level multicurve determines a unique level splitting (because there are only two options for the partition of $S\setminus \beta$, and one option does not satisfy homological coherence).
However, as shown in \cite[Examples 3.3 and 3.4]{IVC}, this is not true for general $N$-level multicurves as soon as $N \ge 3$. 
Moreover, it is possible that an $N$-level multicurve is compatible with $N'$-level splittings.
\end{remark}

Comparing this to \cite{multiscale}, a level splitting records slightly different information than an enhanced level graph/enhanced multicurve without horizontal edges.
The dual graph to a splitting (together with the partition of $S \setminus \beta$) is a level graph, and the winding numbers of the curves correspond to the enhancement, i.e., the orders of the zero and pole on each side of the node.
In particular, for each oriented curve $b \subset \vec{\beta}$ corresponding to an edge of the dual graph with enhancement $\kappa$ (so a zero $z$ of order $o(z)=\kappa-1$ and a pole $p$ of order $o(p)=-\kappa - 1$), we have
\[\phi(b) = - \kappa = -1 - o(z) = 1 + o(p).\]
Thus every $N$-level splitting gives rise to an enhanced level graph.

However, a single enhanced level graph may be compatible with multiple level splittings, as the splitting enforces the Arf invariants of the components of $\phi|_{S \setminus \beta}$ (even up to the action of $\Stab_{\Mod(S)}(\beta)$) and the level graph does not.
This is related to the fact that the $\Mod(S)$ orbit of multicurves are generally larger than the $\FMod(S, \phi)$ orbits (even controlling for winding numbers).
Compare Proposition \ref{prop:2 level trans} below.
\medskip

When $\phi$ has holomorphic type, every boundary component and peripheral curve of the top (that is, $0^{th}$) level of a level splitting has negative winding number.
Homological coherence (Lemma \ref{lemma:HJ}.\ref{item:HC})
then implies that each of these top component must have positive genus.
This corresponds to the fact that the top level of a multi-scale differential in the boundary of a holomorphic stratum must itself be holomorphic, hence is supported on a surface with genus.
We record this for later use:

\begin{fact}\label{fact:toplevel_genus}
Let $\vec{\beta}, (Y_0, \ldots, Y_{-N+1})$ be a level splitting of a framed surface of holomorphic type.
Then each component of $Y_0$ has positive genus.
\end{fact}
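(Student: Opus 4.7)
The plan is to apply homological coherence (Lemma \ref{lemma:HJ}.\ref{item:HC}) to each connected component of $Y_0$ separately, showing that the sum of boundary/peripheral winding numbers is strictly negative enough to force positive genus.

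Fix a connected component $C$ of $Y_0$, and let $c_1, \ldots, c_k$ be its boundary components together with the peripheral loops around the punctures contained in $C$, oriented so that $C$ lies to their left. I want to show that $\phi(c_i) \le -1$ for each $i$. The peripheral loops among the $c_i$ satisfy this because $\phi$ is of holomorphic type: a peripheral loop oriented with $C$ on its left is oriented in the same way as when measuring the signature of $\phi$, so its winding number is some $\phi(\Delta_j) < 0$. For the boundary components, observe that any $c_i \subset \vec{\beta}$ borders the top level $Y_0$ on one side and some $Y_{-j}$ with $j > 0$ on the other; the level splitting axioms force the orientation of $c_i$ in $\vec{\beta}$ to put $Y_0$ (hence $C$) on its left. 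So the orientation used for homological coherence matches the splitting's orientation, and by definition $\phi(c_i) < 0$.

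Now apply homological coherence:
\[
\chi(C) = \sum_{i=1}^{k} \phi(c_i) \le -k.
\]
If $g = g(C)$ denotes the genus of $C$, then $\chi(C) = 2 - 2g - k$, so the inequality becomes $2 - 2g - k \le -k$, i.e., $g \ge 1$, as desired.

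The argument is essentially a one-line application of homological coherence; the only thing one needs to be careful about is matching the orientation conventions between the level splitting and the statement of Lemma \ref{lemma:HJ}.\ref{item:HC}, but the fact that $Y_0$ is the \emph{top} level makes both conventions agree on the sign.
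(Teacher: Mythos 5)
Your proof is correct and takes essentially the same route as the paper, which (in the paragraph immediately preceding the Fact) observes that every boundary and peripheral curve of $Y_0$ has negative winding number and then invokes homological coherence. You have simply made explicit the orientation-matching step (that the level-splitting orientation of a curve in $\vec\beta$ bordering $Y_0$ agrees with the orientation required by Lemma \ref{lemma:HJ}.\ref{item:HC}) and spelled out the Euler characteristic inequality.
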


A corollary of the description of the moduli space of multi-scale differentials in \cite{multiscale} is the following statement, which gives an important necessary condition for which multicurves can be pinched in $\barmarkH$:

\begin{theorem}\label{thm:bdry are adm + level}
If $\barmarkH \cap \cT_{g,n}(\gamma)$ is nonempty, then $\gamma$ is the union of an admissible multicurve $\alpha$ and a disjoint $N$-level multicurve $\beta$.
\end{theorem}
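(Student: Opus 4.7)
My plan is to extract the decomposition of $\gamma$ directly from the structure of the moduli space of multi-scale differentials $\Xi\cH$ developed in \cite{multiscale}. This space is a geometric compactification of $\cH$ equipped with a surjective forgetful map $\Xi\cH \twoheadrightarrow \overline{\cH} \subset \DMcomp$ sending each multi-scale differential to its underlying stable curve. Lifting $X_\infty \in \barmarkH \cap \cT_{g,n}(\gamma)$ to the marked version of $\Xi\cH$, I obtain a combinatorial enhanced level graph $(\Gamma,\ell,\kappa)$ whose edges are in canonical bijection with the components of $\gamma$ and whose vertices correspond to the components of $S \setminus \gamma$.

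The edges of $\Gamma$ partition canonically into horizontal edges (both endpoints at the same level, enhancement zero) and vertical edges (endpoints at different levels, enhancement at least one). Let $\alpha \subseteq \gamma$ be the submulticurve corresponding to horizontal edges and $\beta \subseteq \gamma$ the one corresponding to vertical edges. By construction these are disjoint and $\gamma = \alpha \cup \beta$, so it remains to verify that $\alpha$ is admissible and that $\beta$ admits an $N$-level splitting structure.

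To verify $\alpha$ is admissible, I observe that each horizontal edge arises by pinching a horizontal cylinder of the underlying flat structure. Such a cylinder has a horizontal core curve $a$, so $\phi(a) = 0$ under the identification $\phi = 1/\omega$; it also has nonzero $\omega$-period equal to its (complex) circumference, which forces $[a] \neq 0$ in $H_1(S)$ and hence $a$ non-separating. To verify that $\beta$ defines an $N$-level splitting (with $N$ the number of distinct values of $\ell$), I orient each vertical edge from its higher-level endpoint to its lower-level endpoint, and define $Y_i$ to be the subsurface consisting of all components of $S \setminus \gamma$ at level $i$ together with all horizontal edges of $\alpha$ whose endpoints lie at level $i$. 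The enhancement $\kappa_e$ of a vertical edge $e$ corresponds to a zero of order $\kappa_e - 1$ on the upper side and a pole of order $\kappa_e + 1$ on the lower side of the node, which under $\phi = 1/\omega$ yields $\phi(b_e) = -\kappa_e < 0$. The orientation convention guarantees that for each oriented $b_e$ the subsurface on its left has strictly higher level index than the one on its right, so $(\vec\beta, Y_0, \ldots, Y_{-N+1})$ satisfies the two conditions of a level splitting.

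The main obstacle is the careful translation between the analytic invariants of a multi-scale differential (residues at horizontal nodes, enhancements and prong matchings at vertical nodes) and the combinatorial framing invariants (winding numbers, level indices) on our marked topological surface, since the marking data of $\barmarkH$ is crucial and the conventions of \cite{multiscale} must be reconciled with those of Section \ref{subsec:multiscale}.
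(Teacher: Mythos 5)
Your proposal is correct and matches the paper's (un-elaborated) justification: the paper states this theorem as a direct corollary of the description of $\Xi\cH$ in \cite{multiscale}, and your argument is exactly the intended elaboration, using the same dictionary the paper sets up in Section \ref{subsec:multiscale} (horizontal edges correspond to pinched cylinders, whose core curves have winding number $0$ and are non-separating by Stokes's theorem; vertical edges with enhancement $\kappa$ give curves with $\phi(b) = -\kappa < 0$, and the level function yields the partition $Y_0, \ldots, Y_{-N+1}$). No gaps; the only care needed is the orientation/sign conventions you already flag.
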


\cite{MUW} gives a sufficient condition for a topological type of multicurve to appear in the boundary of a stratum.
Using results from the literature, one can refine this result to stratum {\em components} $\overline{\cH}$. In particular, using \cite{Wongspin} one can determine exactly when the Arf invariants of subsurfaces enforce the total Arf invariant, and the main result of \cite{ChenFaraco} implies that the global residue condition does not impose any further restrictions.

To further upgrade this to a result for $\barmarkH$, one would also need to establish very strong transitivity results for the action of the framed mapping class group (for example, one needs transitivity on all admissible multicurves of the same topological type, not just pairs).
We were unable to achieve this level of generality, and so instead focus our attention on the ``largest'' boundary strata. For coarse-geometric questions, this distinction will be irrelevant.

\subsection{Divisorial multicurves}\label{subsec:divisorial}
As mentioned above, the map from the space of multiscale differentials $\Xi \cH$ to the closure $\overline{\cH} \subset \DMcomp$ is highly singular. 
All the same, because the boundary of $\Xi\cH$ is a normal crossing divisor, it gives us a good notion of what the largest boundary strata are.

Irreducible components of the boundary divisor of the moduli space $\Xi \cH$ of multiscale differentials correspond to 1-level graphs with a single horizontal edge (i.e., admissible curves) and certain 2-level graphs with only vertical edges (i.e., 2-level multicurves) \cite{multiscale}.
We therefore make the following definition:

\begin{definition}\label{def:div}
A multicurve $\gamma$ is called {\em divisorial} for $\cH_\phi$ if $\barmarkH \cap \cT_{g,n}(\gamma)$ is nonempty and $\gamma$ is either a single admissible curve or a 2-level multicurve.
\end{definition}

It is still fairly complicated to identify exactly which 2-level multicurves are divisorial (compare the discussion at the end of the previous subsection as well as Proposition \ref{prop:2 level trans} below).
As a first example, if $\beta$ is divisorial and $Y_0(\beta)$ has a genus 1 component $U$, then in the associated boundary component of $\Xi \cH$ the surface $U$ is equipped with a holomorphic abelian differential. Thus $\Arf_{1}(\phi|_U)$ must be $0$ (Remark \ref{rmk:holtype vs holdiff}).

All the same, we can build a graph that records only the intersections of (multicurves corresponding to) strata of $\barmarkH$ coming from boundary divisors of $\Xi\cH$. 

\begin{definition}
Set $\CRE$ to be the graph with vertices given by divisorial multicurves $\gamma$, and with an edge between $\gamma$ and $\delta$ if and only if 
$\barmarkH \cap \mathcal T_{g,n}({\gamma \cup \delta})$ is nonempty.
\end{definition}

Observe that $\CRE$ is to the curve graph as $\scrC(\barmarkH)$ is to the multicurve graph.
In particular, its subdivision is a subgraph of $\mathscr{C}(\barmarkH)$ by definition, and since every boundary stratum of $\Xi\cH$ is an intersection of boundary divisors, this subgraph is coarsely dense.

Unfortunately, while it is simpler than $\mathscr{C}(\barmarkH)$, the edges of $\CRE$ are still defined in terms of intersections of boundary strata. This is a subtle question even for disjoint 2-level splittings, so we define one further, simpler curve graph that is more amenable to the HHS techniques from the previous sections.
Eventually, we will show that all of the graphs we have defined are quasi-isometric (Corollary \ref{cor:variants_qi}).

\begin{definition}
Set $\CRV$ to have the same vertex set as $\CRE$, with an edge between $\gamma$ and $\delta$ if and only if the two multicurves are disjoint (but are allowed to share components).
\end{definition}

The graphs $\CRE$ and $\CRV$ are indeed different.
We thank Martin M{\"o}ller for first bringing this phenomenon to our attention.

\begin{example}[Pinching curves but not their union]
Suppose that $S$ has a single puncture and let $\Delta$ denote a curve encircling that puncture.
Let $c$ and $d$ be separating curves on $S$ such that $(\Delta, c, d)$ bounds a pair of pants $P$.
Then $Y_i(c)$ and $Y_i(d)$ both have genus for $i = 0, -1$, and by the main Theorem of \cite{MUW} (or explicit construction), one sees that $\barmarkH$ meets both $\cT_{g,n}(c)$ and $\cT_{g,n}(d)$. 

However, if $\barmarkH$ were to meet $\cT_{g,n}(c \cup d)$, then on any multiscale differential corresponding to this boundary stratum the pair of pants $P$ would be equipped with a meromorphic differential with a single zero and two poles.
Stokes' theorem (more generally, the global residue condition \cite[\S2.4 item (4)]{multiscale}) would then imply that the residues at each pole would be 0, but there is no meromorphic differential on $\widehat{\mathbb{C}}$ with a single zero and two poles of zero residue.
\end{example}

The inclusion gives a 1-Lipschitz map from $\CRE$ to $\CRV$; below, we show that this map actually extends to $\mathscr{C}(\barmarkH)$.

\begin{lemma}\label{lem:Lip from C to E}
There is a coarsely Lipschitz map
\[\xi:\mathscr{C}(\barmarkH) \to \CRV\]
that coarsely agrees with the inclusion $\CRE \hookrightarrow \CRV$.
\end{lemma}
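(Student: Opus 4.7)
The plan is to define $\xi(\gamma)$ to be the set of all divisorial sub-multicurves of $\gamma$, i.e., those $\delta \subseteq \gamma$ that are vertices of $\CRE$. All but one step of the proof are then formal consequences of this ``sub-multicurve'' definition.

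I first show $\xi(\gamma) \neq \emptyset$ for each $\gamma \in \mathscr{C}(\barmarkH)$. Because $\barmarkH \cap \cT_{g,n}(\gamma)$ is nonempty, it lifts to a nonempty boundary stratum of the moduli space $\Xi\cH$ of multi-scale differentials recalled in Section~\ref{subsec:multiscale}. Since the boundary of $\Xi\cH$ is a normal crossing divisor, this stratum lies in some boundary divisor of $\Xi\cH$, which by the description of Section~\ref{subsec:multiscale} corresponds either to a single horizontal edge (an admissible component of $\gamma$) or to a single 2-level graph supported on a sub-multicurve of $\gamma$. In either case the result is a divisorial sub-multicurve. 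The one place care is needed is confirming that this topological sub-multicurve $\delta$ satisfies $\barmarkH \cap \cT_{g,n}(\delta) \neq \emptyset$ rather than merely $\overline{\cH} \cap \cM_{g,n}(\delta) \neq \emptyset$; this follows from the relationship between $\barmarkH$ and $\cH$ reviewed in Section~\ref{subsec:stratabasics}. This is the only step of real substance.

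With $\xi(\gamma)$ nonempty, coarse Lipschitzness is immediate. Any two sub-multicurves of a common multicurve are themselves disjoint up to shared components, so any two elements of $\xi(\gamma)$ are adjacent or equal in $\CRV$, giving $\diam_{\CRV} \xi(\gamma) \leq 1$. When $\gamma \subsetneq \gamma'$ is an edge of $\mathscr{C}(\barmarkH)$, every element of $\xi(\gamma) \cup \xi(\gamma')$ is again a sub-multicurve of $\gamma'$, so the same observation bounds the diameter of their union by $1$; this verifies the two bullets from Section~\ref{sec:Cadm_qi_model} needed for $\xi$ to be coarsely Lipschitz. Finally, every $\gamma \in \CRE$ is a divisorial sub-multicurve of itself, so $\gamma \in \xi(\gamma)$ and $\xi$ agrees with the inclusion $\CRE \hookrightarrow \CRV$ on the nose.
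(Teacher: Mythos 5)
Your proof is correct and takes essentially the same approach as the paper's: both rely on the normal-crossing structure of $\partial \Xi\cH$ to produce a divisorial undegeneration of the boundary stratum corresponding to $\gamma$, both observe that such undegenerations are sub-multicurves of $\gamma$, and both exploit that sub-multicurves of a fixed multicurve are pairwise disjoint to get the clique bound. The one cosmetic difference is that you define $\xi(\gamma)$ to be the set of \emph{all} divisorial sub-multicurves of $\gamma$, whereas the paper defines it via divisorial undegenerations of a particular boundary stratum (a priori a subset of yours); since both sets consist of sub-multicurves of $\gamma$ and hence lie in a common clique of $\CRV$, the distinction has no effect on the argument. You correctly isolate the only nontrivial step — lifting the existence of a divisorial undegeneration from $\overline{\cH}$ to $\barmarkH$ — which the paper subsumes into its unproved Fact.
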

\begin{proof}
Any boundary stratum of $\Xi \cH$ is an intersection of boundary divisors; exactly which divisors can be recovered from the ``undegenerations'' of the associated enhanced level graph \cite[Definition 5.1]{multiscale}.
The precise details of the situation will not be important to us; all we need is the following:

\begin{fact}
Given any boundary stratum of $\Xi \cH$ in which $\gamma$ is pinched, all of its undegenerations correspond to pinching sub-multicurves of $\gamma$.    
\end{fact}

We now define the desired coarsely Lipschitz map 
\[\xi: \scrC(\barmarkH) \to 2^{\CRV}\]
by sending a multicurve $\gamma$ to the set of multicurves pinched in any divisorial undegeneration of any boundary stratum corresponding to $\gamma$.\footnote{
This is one place where our viewpoint of taking level multicurves, not level splittings, makes the discussion more complicated. Which undegenerations occur, and which boundary divisors intersect, depend not just on the multicurve but also on the level structure.}
The image of any vertex of $\scrC(\barmarkH)$ lies in a clique since all of the undegenerations are disjoint, and since edges of $\scrC(\barmarkH)$ are given by inclusion, it follows that if $\gamma \subset \delta$ then $\xi(\gamma) \subset \xi(\delta)$. Thus $\xi$ is coarsely (2-)Lipschitz.

To see that $\xi$ coarsely agrees with the inclusion, we simply observe that it agrees with the inclusion on vertices of $\CRE$, and that the edges of $\CRE$ get mapped to sets containing both endpoints.
\end{proof}

\section{From transitivity to geometry}\label{sec:transgeom}
All of the graphs defined in the previous section carry a natural action of the framed mapping class group. Throughout the section, fix a non-hyperelliptic stratum component $\cH \subset \cM_{g,n}$ (necessarily with $g \ge 3$) and let $\markH$ be as in Definition \ref{def:markH}. Set $S = S_{g,n}$.
Then if $\barmarkH \cap \cT_{g,n}(\gamma) \neq \emptyset$, we have for any $f \in \FMod(S, \phi)$,
\[
f(\barmarkH \cap \cT_{g,n}(\gamma)) 
= \barmarkH \cap \cT_{g,n}(f(\gamma))
\neq \emptyset.
\]
In this section, we analyze this action in more detail and use certain transitivity properties to relate the geometries of $\scrC(\barmarkH)$, $\CRE$, and $\CRV$ to each other, to a hierarchically hyperbolic model, and to the admissible curve graph $\Cadm(S, \phi)$. This will complete the proof of our main Theorem \ref{mainthm:bdrycx}.

As a first example of this technique, let us prove the following:

\begin{lemma}\label{lem:CRE/V contain Cadm}
Both $\CRE$ and $\CRV$ contain $\Cadm(S, \phi)$.
\end{lemma}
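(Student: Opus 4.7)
To show the containment, I need to verify two things:
(a) every admissible curve is a divisorial multicurve, hence a vertex of both $\CRE$ and $\CRV$; and
(b) every edge $\{a,b\}$ of $\Cadm(S,\phi)$ is realized as an edge of both graphs.
Part (b) in $\CRV$ is automatic because disjoint admissible curves are disjoint multicurves, so the only real content lies in verifying the two existence statements
\[
\barmarkH\cap \cT_{g,n}(a)\neq\emptyset
\quad\text{and}\quad
\barmarkH\cap \cT_{g,n}(a\cup b)\neq\emptyset
\]
for admissible $a$ and for disjoint admissible pairs $a,b$.

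For the first statement, I would invoke the cylinder interpretation of admissible curves from \cite[Corollary 1.2]{CS} recalled in Section \ref{subsec:stratabasics}: every admissible curve $a$ can be realized as the core of a horizontal cylinder on some marked differential $(X,\omega,f)\in\markH$. Shrinking the width of this cylinder to zero is a degeneration in $\markH$ whose Weil--Petersson limit lies in $\cT_{g,n}(a)$; equivalently, one produces a multi-scale differential in $\Xi\cH_\phi$ whose level graph has a single horizontal edge corresponding to $a$, so its image under $\Xi\cH\to\overline{\cH}$ sits in the boundary stratum in which $a$ is pinched. Either description gives a point of $\barmarkH\cap\cT_{g,n}(a)$, and since $a$ is an admissible single curve, it is divisorial in the sense of Definition \ref{def:div}.

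The second statement is the main obstacle, since a priori I need to simultaneously pinch two cylinders whose cores realize the given topological configuration of $(a,b)$. My plan is to combine the transitivity of $\FMod(S,\phi)$ on pairs of admissible curves of the same topological type (Proposition \ref{prop:transitiveadm}) with an iterated pinching construction: starting from a differential in $\markH$ with $a$ as the core of a horizontal cylinder $C_a$, pinch $C_a$ to obtain a nodal surface carrying a (possibly disconnected) holomorphic differential on $S\setminus a$. Inside this smaller stratum one can find a cylinder whose core is an admissible curve $b'$ disjoint from $a$; pinching it too produces a point of $\barmarkH\cap\cT_{g,n}(a\cup b')$. By Proposition \ref{prop:transitiveadm} there is an element of $\FMod(S,\phi)$ taking $(a,b')$ to any pair $(a,b)$ of the same topological type, and since $\FMod(S,\phi)$ acts on $\barmarkH$ preserving the boundary stratification, this gives $\barmarkH\cap\cT_{g,n}(a\cup b)\neq\emptyset$ for every disjoint admissible pair in that orbit. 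The only delicate point is ensuring that each topological type of disjoint admissible pair is covered, which reduces to verifying that on the various pieces of $S\setminus a$ one can always produce a second admissible cylinder of each allowed type; this can be checked by a short case analysis (nonseparating union versus genus- or puncture-splitting union) using Lemmas \ref{lem:genusadm} and \ref{lem:adjustwinding} to build the required configuration of admissible curves and then realizing them simultaneously as cylinders via the construction above.
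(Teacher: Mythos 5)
Your reduction is the same as the paper's: vertices and $\CRV$-edges are easy, and Proposition \ref{prop:transitiveadm} reduces the $\CRE$-edge case to exhibiting, for each topological type of disjoint admissible pair, \emph{one} pair of that type that can be simultaneously pinched. Your treatment of single curves via the cylinder interpretation of \cite[Corollary 1.2]{CS} is also fine.

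The gap is in the iterated-pinching step for pairs. After pinching the cylinder with core $a$ you land in a boundary stratum where the limit differential on $S\setminus a$ is meromorphic with simple poles at the node (not holomorphic, as you write), and you then need a cylinder on that degenerate differential whose core $b'$ makes $(a,b')$ of the \emph{prescribed} topological type. Masur's theorem gives you some cylinder, but not one of a specified type; to get a prescribed admissible curve realized as a cylinder you would need an analogue of \cite[Corollary 1.2]{CS} for the boundary stratum (i.e., control of the monodromy/framed mapping class group of that stratum of meromorphic differentials), which is not available and is essentially of the same difficulty as the statement being proved. Your closing sentence defers exactly this to ``a short case analysis,'' but Lemmas \ref{lem:genusadm} and \ref{lem:adjustwinding} only produce admissible curves topologically; they say nothing about realizing them as cylinders on a fixed degenerate differential. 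The paper avoids this entirely by working bottom-up: it builds the boundary point directly, choosing meromorphic differentials with simple poles of equal residue on the pieces of $S\setminus(a\cup b)$ (nonempty by Boissy) and plumbing infinite cylinders along $a\cup b$. That route forces one to check that the plumbed surface lands in the correct stratum component, i.e., to match Arf invariants, including a special case for $\Omega\cM_3(2,2)$; your approach would partially sidestep the component-matching issue by starting inside $\markH$, but only if the realization step above could be justified, which it currently cannot.
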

\begin{proof}
It suffices to prove the statement for $\CRE$ as it is a subgraph of $\CRV$.
Since every admissible curve is divisorial, $\CRE$ contains the vertices of $\Cadm(S, \phi)$, so it remains to show that it also contains the edges.
By Proposition \ref{prop:transitiveadm}, the framed mapping class group $\FMod(S, \phi)$ acts transitively on pairs of admissible curves of the same topological type.
Thus, it only remains to show that $\barmarkH$ meets {\em some} boundary stratum $\cT_{g,n}(\alpha)$ for each topological type $\alpha$ of pair of admissible curves.

One can do this by explicit construction, one possibility of which we sketch below.
The restriction of $\phi$ to $S \setminus \alpha$ is a framing with four boundary components of winding number $0$.
By holomorphicity of $\phi$, each component of $S \setminus \alpha$ either has positive genus or each peripheral curve on that component has winding number $-1$. 
Pick meromorphic differentials on the components of $S \setminus \alpha$ inducing the same framing and with simple poles corresponding to $\alpha$, all of the same residue (this can be done because strata of meromorphic differentials on surfaces of genus $\ge 1$ with simple poles are always nonempty \cite{Boissy}, and the genus 0 case corresponds to adding free marked points on a cylinder). 
Cutting the infinite cylinders and gluing them together along $\alpha$ yields a holomorphic differential in the correct stratum; applying the (unframed) mapping class group then allows us to ensure that it actually lies in $\markH$. Degenerating these cylinders by letting their heights go to $\infty$ then produces a path in $\markH$ to $\cT_{g,n}(\alpha)$.

The only thing one might worry about is matching the Arf invariants of the subsurfaces to ensure that the plumbed surface has the correct Arf invariant: this turns out not to be an issue for the following reason.
If $g \ge 4$ then each stratum of meromorphic differentials in genus $\ge 2$ has components of both spin parities \cite[Theorem 1.2]{Boissy}, so by choosing the appropriate Arf invariants on pieces we can ensure that the plumbed surface has the appropriate Arf invariant.
In the special case that $g =3$, there is a unique component of meromorphic differentials on a genus 1 surface with two simple poles and a single zero of order 2, and so the plumbed surface is forced to have odd Arf invariant. Fortunately, this only happens in the stratum $\Omega\cM_3(2,2)$, which has a unique non-hyperelliptic component of odd Arf invariant \cite[Theorem 2]{KZ}.
\end{proof}

\subsection{The action on 2-level multicurves}

2-level multicurves can have many different topological types, so $\FMod(S, \phi)$ will certainly not act transitively on them. However, even controlling for topological type and winding numbers, the Arf invariants of subsurfaces present additional invariants of the $\FMod(S, \phi)$ orbit.
We show below that these are the only obstructions to transitivity.

While we will not use this in the sequel, note that the following statement is true for {\em all} 2-level splittings of a surface of holomorphic type, not just divisorial ones.

\begin{proposition}\label{prop:2 level trans}
Let $\phi$ be a framing of holomorphic type on a surface $S$ of genus at least 3.
Let $\beta$ be any 2-level multicurve.
Then a multicurve $\beta'$ is in the $\FMod(S, \phi)$ orbit of $\beta$ if and only if there exists an $h \in \Mod(S)$ such that:
\begin{enumerate}
    \item $h(\beta) = \beta'$.
    \item $\phi(b) = \phi(h(b))$ for every curve $b \in \beta$.
    \item For each component $U$ of $S \setminus \beta$ of genus at least 2 such that $\phi|_U$ is of spin type,
    \[\Arf(\phi|_U) = \Arf( \phi|_{h(U)}),\]
    and similarly, for any complementary component $U$ of genus 1,
    \[\Arf_1(\phi|_U) = \Arf_1( \phi|_{h(U)}).\]
\end{enumerate}
\end{proposition}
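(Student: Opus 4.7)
The ``only if'' direction is immediate: given $f \in \FMod(S,\phi)$ with $f(\beta)=\beta'$, take $h=f$, so that (2) holds because $f$ preserves $\phi$ and (3) holds because $f$ restricts to a homeomorphism $U \to f(U)$ carrying $\phi|_U$ to $\phi|_{f(U)}$. For the ``if'' direction, mirroring the strategy of Proposition \ref{prop:transitiveadm}, set $\psi := h \cdot \phi$. Conditions (2) and (3) then translate to the statements that $\psi$ agrees with $\phi$ on each curve of $\beta'$ and on the Arf invariant of each component of $S \setminus \beta'$. It suffices to produce $g \in \Mod(S)$ with $g(\beta')=\beta'$ setwise and $g \cdot \psi = \phi$, for then $f := g \circ h$ lies in $\FMod(S,\phi)$ and sends $\beta$ to $\beta'$.

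The plan is to construct a geometric symplectic basis for $S$ that is compatible with $\beta'$ and realizes identical winding-number tuples with respect to both $\phi$ and $\psi$, then invoke the standard change-of-coordinates principle. Concretely: (a) on each component $U_i$ of $S \setminus \beta'$, Lemma \ref{lem:existsGSB} provides GSBs $\mathcal{B}_i^\phi$ and $\mathcal{B}_i^\psi$ realizing a common tuple of winding numbers, since $\phi|_{U_i}$ and $\psi|_{U_i}$ share the same signature (peripheral data is determined by $\phi|_{\beta'}=\psi|_{\beta'}$) and the same Arf invariant by hypothesis (3); (b) select a sub-multicurve $\beta_0 \subset \beta'$ whose homology classes, together with those of the $\mathcal{B}_i$, form a basis of $H_1(S)$ --- combinatorially this is a spanning tree of the dual graph of $\beta'$; (c) for each $b \in \beta_0$, Fact \ref{fact:toplevel_genus} guarantees a positive-genus component of $S \setminus \beta'$ on the $Y_0$-side of $b$, so Lemma \ref{lem:adjustwinding} produces dual curves $b^*_\phi$ and $b^*_\psi$ meeting $b$ once, disjoint from $\beta' \setminus \{b\}$ and from the $\mathcal{B}_i$, with $\phi(b^*_\phi) = \psi(b^*_\psi)$ equal to any prescribed integer. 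The configurations $\mathcal{C}_\phi := \beta' \cup \bigcup b^*_\phi \cup \bigcup_i \mathcal{B}_i^\phi$ and its $\psi$-counterpart $\mathcal{C}_\psi$ then have identical topological type, with winding numbers matched curve-by-curve.

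The (unframed) change-of-coordinates principle in $\Mod(S)$ now yields $g \in \Mod(S)$ carrying $\mathcal{C}_\psi$ to $\mathcal{C}_\phi$ and preserving $\beta'$ setwise. Since $g \cdot \psi$ and $\phi$ take equal values on the GSB $\bigcup_i \mathcal{B}_i^\phi \cup \bigcup b^*_\phi$, Lemma \ref{lemma:HJ} forces them to coincide, completing the argument.

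The hard part will be executing step (c) simultaneously for all $b \in \beta_0$: the dual curves must be pairwise disjoint, disjoint from the chosen subsurface bases, and yet each must traverse a region of sufficient genus to realize arbitrary winding numbers. In low-complexity cases (e.g.\ $g=3$ with several small components) the choices must be interleaved --- first picking the dual curves and only then building GSBs in their complement --- and one may need to invoke the $\Arf_1=1$ branch of Lemma \ref{lem:adjustwinding} when the $Y_0$-side of some $b$ contains only a genus-$1$ subsurface. A secondary subtlety is to confirm that the curves in $\beta' \setminus \beta_0$ automatically carry compatible winding numbers; this is guaranteed by hypothesis (2), since $\psi|_{\beta'}=\phi|_{\beta'}$ and any framing is determined on all simple closed curves by its values on a single GSB.
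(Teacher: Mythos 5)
Your reduction (replace $\phi$ by $\psi = h\cdot\phi$, then build GSBs adapted to $\beta'$ with matching winding numbers and apply unframed change-of-coordinates) is the same high-level strategy as the paper, and steps (a) and (b) are essentially what the paper does for the complementary components. The gap is in step (c), and it is exactly where the paper has to work hardest. First, a dual curve $b^*$ meeting $b\in\beta_0$ once and disjoint from $\beta'\setminus\{b\}$ need not exist: a $2$-level multicurve can contain homologous curves (see Figure \ref{fig:hompinch}), and if $b_1,b_2\in\beta'$ are homologous then $\langle b^*,b_1\rangle=\pm\langle b^*,b_2\rangle$, so no curve meets $b_1$ algebraically once while missing $b_2$. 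The paper's duals $\mathsf c$ are only required to be disjoint from the other curves of the GSB, not from $\beta\setminus\mathsf b$. Second, and more seriously, Lemma \ref{lem:adjustwinding} is the wrong tool for prescribing $\phi(b^*)$: it adjusts the winding number by connect-summing $b^*$ with a curve $e$ inside a genus reservoir $T$, and the resulting curve then traverses a handle of $T$. If $T$ sits inside a top-level component $U$, the complement of the adjusted arc in $U$ has genus strictly less than $g(U)$ (e.g.\ for $g(U)=2$ the complement has genus $1$), so one can no longer find a full GSB for $U$ disjoint from $b^*$ --- and such a GSB is needed, since $g(S)=\sum_i g(U_i)+|\beta_0|$ forces every handle of every component to be accounted for by curves disjoint from all the duals. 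Your proposed ``interleaving'' does not resolve this: whichever order you make the choices in, the genus consumed by the connect sum is unavailable to the component GSB.

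The paper circumvents both problems with a multitwist trick rather than a surgery. Having fixed the bottom-level GSBs and an arbitrary extension by $\mathsf b\cup\mathsf c$ in the complement of full-genus subsurfaces $V_U\subset U$ of the top level, it forms $d'=b'\,\#\,a'$ with $a'$ admissible in (the image of) $V_U$, so that $\phi(b')-\phi(d')=-1$ by homological coherence (Lemma \ref{lemma:HJ}), and post-composes with $\left(T_{b'}^{\pm}T_{d'}^{\pm}\right)^{k}$. By twist-linearity this shifts the winding number of the image of the dual curve by exactly $k$ while fixing $\beta'$, the bottom-level data, and the other duals; and because it is a homeomorphism applied to the whole configuration rather than a modification of one curve, the image of $V_U$ still has full genus and is still disjoint from the image of the dual curve, so the top-level GSBs can be chosen last (their Arf invariants being controlled by hypothesis (3) and Lemma \ref{lem:fullgenusArf}). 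Some such mechanism that decouples ``adjusting the winding number of the dual curves'' from ``reserving the genus of the top level for the component GSBs'' is the missing idea in your write-up.
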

\begin{proof}
We are given $h \in \Mod(S)$ taking $\beta$ to $\beta'$; our goal is to find an element in the (orientation-preserving, component-wise) stabilizer of $\beta'$ such that its composition with $h$ preserves the winding numbers of a GSB for $S$.
We will construct this element and the associated GSB in steps, starting from the bottom and working up.
The reader is invited to compare with the discussion of ``perturbed period coordinates,'' especially Figure 5, in \cite{multiscale}.
Throughout the proof, given any curve of $\beta$ or component $U$ of $S \setminus \beta$, we will add a prime to denote its image under $h$, i.e., $U' := h(U)$.

\medskip{\noindent \bf Bottom level:}
Choose a GSB $\mathcal{B}_U$ on each component $U$ of $Y_{-1}(\beta)$. Hypothesis (3) allows us to apply Lemma \ref{lem:existsGSB} to choose a GSB $\mathcal B_{U'}$ for $U'$ with the same set of winding numbers as appear in $\mathcal B_{U}$.
Using the classical change-of-coordinates principle, we can find some element $f_{U'} \in \Mod(U')$ (which we can then think of as living in $\Mod(S)$ via inclusion) that takes $h(\mathcal{B}_U)$ to $\mathcal B_{U'}$.
Set 
\[f_{bot} =\prod_{U' \subset Y_{-1}(\beta')} f_{U'} \circ h;\]
by construction it takes $\beta$ to $\beta'$ and preserves the winding numbers of a GSB for $Y_{-1}(\beta)$.

\medskip{\noindent \bf Level passage:}
For this and the next step, for each component $U$ of the top level $Y_0(\beta)$, pick a subsurface $V_U \subset U$ with full genus and a single boundary component.

Pick a maximally homologically independent subset $\mathsf{b} = (b_1, \ldots, b_h)$ of $\beta$ and extend 
\[\bigcup_{U \subset Y_{-1}(\beta)} \mathcal B_{U} \cup \mathsf b\]
to a GSB for the complement of all of the $V_U$. Let $\mathsf c = (c_1, \ldots, c_h)$ denote the resulting set of curves symplectically dual to those of $\mathsf b$.
The element $f_{bot}$ will most likely not preserve the winding numbers of $\mathsf c$, but we can rectify this using elements supported entirely on $Y_0(\beta')$.

Consider first $c_1$; the dual curve $b_1$ is a curve of the 2-level splitting $\beta$, and we use $U_1$ to denote the component of $Y_0(\beta)$ adjacent to $b_1$.
Since $\phi$ is of holomorphic type, $U_1$ has genus, hence so does the full-genus subsurface $V_1 := V_{U_1}$.
Set $\mathsf c' = f_{bot}(\mathsf c)$ and likewise for its components. 
Pick some admissible curve $a_1' \subset f_{bot}(V_1)$ and let $d_1'$ denote the connect sum of $b_1'$ with $a_1'$ along some arc contained in $U_1'$. 
Since $a_1'$ is disjoint from $\mathsf c'$ and $b_1'$ only meets $c_1'$, we see that the algebraic intersection number of $d_1'$ with each $c_j'$ is 0 unless $j=1$, in which case it is exactly $1$. See Figure
\ref{fig:fixlevels}.

\begin{figure}
    \centering
    \def\svgwidth{0.6\linewidth}
    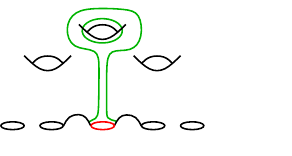
    \caption{The multitwist needed to fix the winding number of $f_{bot}(c_1)$. Note that it may not always be possible to choose the curve $d_1'$ to be disjoint from the other curves of $f_{bot}(\mathsf c)$.}
    \label{fig:fixlevels}
\end{figure}

By homological coherence (Lemma \ref{lemma:HJ}.\ref{item:HC}), 
\[\phi(b_{1}') - \phi(d_1') = -1\]
when appropriately oriented. Thus, if we set 
\[f_1 := \left(T_{b_{1}'}^\pm T_{d_1'}^\pm \right)
^{\phi(c_{1}') - \phi(c_{1})}\]
for an appropriate choice of signs, then by twist-linearity (Lemma \ref{lemma:HJ}.\ref{item:TL}) we see that $f_1(c_1')$ has the same winding number as $c_1$ and that $f_1$ preserves the winding numbers of all other $c_j'$.

We now repeat the above procedure but with 
$f_1 \circ f_{bot}$ instead of $f_{bot}$.
More precisely, set $V_2 \subset U_2$ to be the full-genus subsurface of the top-level component of $S \setminus \beta$ adjacent to $b_2$.\footnote{The component $U_2$ (and subsurface $V_2$) may be the same as $U_1$ (and $V_1$).}
There is an admissible $a_2' \subset f_1 f_{bot}(V_2)$, and taking the connect sum of $b_2'$ with this curve yields some $d_2'$ whose algebraic intersection with each curve of $f_1 f_{bot}(\mathsf c)$ is 0 except for $f_1 f_{bot}(c_2)$. Taking an appropriate multitwist in $b_2'$ and $d_2'$ yields some $f_2$ supported on $U_2'$ such that $f_2 f_1 f_{bot}$ preserves the winding numbers of both $c_1$ and $c_2$.

Iterating, we get a sequence of mapping classes $f_1, \ldots, f_{h}$ all supported on $Y_{0}(\beta')$ such that the composite
\[f_{mid} := f_h\circ \ldots \circ f_1 \circ f_{bot}\]
takes $\beta$ to $\beta'$ and preserves the winding numbers of the curves of a GSB for the complement of the full-genus subsurfaces $V$ of the top-level components $U \subset Y_0(\beta)$. 

\medskip{\noindent \bf Top level:}
To finish, we can simply use the action of $\Mod(f_{mid}(V))$ to amend the winding numbers of the remaining curves as we did for the bottom level.

Pick a GSB $\mathcal{B}_V$ for each such $V$. Then by Lemma \ref{lem:fullgenusArf} and hypothesis (3), we have that 
\[\Arf(\phi|_{V})= \Arf(\phi|_{U}) = \Arf(\phi|_{U'}) = \Arf(\phi|_{f_{mid}(V)})\]
when defined. If $U$ is of genus $1$, the same thing holds for the genus 1 Arf invariant (note that this requires the fact that $\phi|_U$ is of holomorphic type!).
Lemma \ref{lem:existsGSB} then implies that
$f_{mid}(V)$ admits a GSB with the same winding numbers as $\mathcal{B}_V$ and we pick an element $f_V \in \Mod(f_{mid}(V))$ taking $f_{mid}(\mathcal{B}_V)$ to this GSB.

Finally, we observe that the mapping class $\prod_{V} f_V \circ f_{mid}$ takes $\beta$ to $\beta'$ and preserves the winding numbers of the following GSB for $S$:
\[
\bigcup_{U \subset Y_{-1}(\beta)} \mathcal B_{U}
\cup \mathsf b \cup \mathsf c
\cup \bigcup_{U \subset Y_{0}(\beta)} \mathcal B_{V}.
\]
We have therefore constructed the desired framed mapping class.
\end{proof}

\subsection{Pinching admissible curves}

Using the same ideas as Proposition \ref{prop:2 level trans}, we show that every 2-level splitting is connected to some admissible curve in $\CRE$.

\begin{proposition}\label{prop:realize_adm}
Suppose that $\beta$ is a divisorial 2-level multicurve for $\barmarkH$, i.e., $\barmarkH$ meets $\cT_{g,n}(\beta)$.
Then for any admissible $a \subset Y_0(\beta)$, we have that $\barmarkH$ meets $\cT_{g,n}(\beta \cup a)$.
\end{proposition}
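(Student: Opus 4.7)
The plan is to construct a multi-scale differential in $\Xi\cH$ whose enhanced level graph agrees with that of $\beta$ but with an additional horizontal edge at $a$; the image of this differential in $\BATS$ will then be a point of $\barmarkH \cap \cT_{g,n}(\beta \cup a)$.

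I would start from a multi-scale differential $M_0 \in \Xi\cH$ realizing the pinching of $\beta$, which exists by hypothesis. The top level of $M_0$ consists of a meromorphic differential on (the components of) $Y_0(\beta)$, holomorphic away from prescribed zeros and with simple poles at the $\beta$-boundary whose residues satisfy the global residue condition. The key step is to deform $M_0$ within $\barmarkH \cap \cT_{g,n}(\beta)$ so that the admissible curve $a$ becomes the core of a horizontal cylinder of the top-level differential; collapsing this cylinder then introduces a horizontal node at $a$ and produces the desired multi-scale differential.

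To realize $a$ as a horizontal cylinder on the top level, I would combine an explicit plumbing construction with framed transitivity. First, adapting Lemma \ref{lem:CRE/V contain Cadm} to the relative setting of meromorphic differentials on $Y_0(\beta)$ with prescribed pole data --- using non-emptiness of strata of meromorphic differentials \cite{Boissy} together with Lemmas \ref{lem:existsGSB} and \ref{lem:adjustwinding} to build pieces and plumb them --- I would produce \emph{some} admissible curve $a_0 \subset Y_0(\beta)$, supported in the same components of $Y_0(\beta)$ as $a$, which appears as a horizontal cylinder on a top-level differential for the $\beta$-degeneration. Then I would apply Proposition \ref{prop:transitiveadm} to each top-level component $U \subset Y_0(\beta)$ (regarding $\partial U$ as punctures with the prescribed winding numbers) to find an element of $\FMod(S,\phi)$ that fixes the level splitting $\beta$ and sends $a_0$ to $a$.

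The main obstacle is the careful tracking of Arf invariants on the top-level components. When $Y_0(\beta)$ has a genus-$1$ component $U$, the existence of a holomorphic top-level differential on $U$ forces $\Arf_1(\phi|_U) = 0$ (Remark \ref{rmk:holtype vs holdiff}), which constrains both which admissible curve classes can lie inside $U$ and which can be produced as cylinders via the relative construction. Reconciling these constraints to ensure $a_0$ and $a$ lie in the same $\FMod(U,\phi|_U)$-orbit requires a bookkeeping argument parallel to (and using the techniques of) Proposition \ref{prop:2 level trans}, with the holomorphic-type assumption on $\phi$ and Fact \ref{fact:toplevel_genus} ensuring compatibility throughout.
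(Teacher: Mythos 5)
Your two-step skeleton (first pinch \emph{some} admissible curve on the top level, then move it to $a$ by a framed mapping class stabilizing $\beta$) is exactly the paper's, but both steps as written have problems. For the first step, you have the structure of the top-level differential wrong: since $\beta$ is a purely vertical $2$-level degeneration, every node carries a zero of order $-\phi(b)-1\ge 0$ on the top side and a pole of order $\phi(b)-1\le -2$ on the bottom side, so the restriction to $Y_0(\beta)$ is \emph{holomorphic}, not meromorphic with simple poles at $\beta$ (simple poles correspond to horizontal nodes, i.e.\ admissible curves, which by definition are not in $\beta$). This actually makes the step much easier than your plumbing construction: the paper simply invokes Masur's theorem that every holomorphic differential contains an embedded cylinder, takes its (necessarily admissible) core curve $a'$ in the relevant component of $Y_0(\beta)$, and sends the cylinder height to infinity.

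The genuine gap is in the second step. Applying Proposition \ref{prop:transitiveadm} to a top-level component $U$ "regarding $\partial U$ as punctures" fails for two concrete reasons. First, that proposition requires genus at least $3$, while components of $Y_0(\beta)$ can have genus $1$ or $2$. Second, and more fundamentally, an element of $\Mod(U)$ that preserves the absolute framing $\phi|_U$ need not extend to an element of $\FMod(S,\phi)$: preserving winding numbers of closed curves contained in $U$ does not control the winding numbers of curves crossing $\partial U$, which is precisely the relative-versus-absolute framing issue the paper flags in the discussion preceding Proposition \ref{prop:transitiveadm} ("At issue is what happens when we try to glue together framings on subsurfaces\dots"). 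You localize the remaining difficulty to Arf invariants of genus-$1$ components, but the real obstruction is this gluing problem. The paper resolves it by rerunning the \emph{global} construction from the proof of Proposition \ref{prop:2 level trans} --- bottom level, level passage, top level --- and checking at each stage that the modifying mapping classes can be chosen to fix $a'$ (e.g.\ by choosing the full-genus subsurface $V\subset U$ to contain $a'$ and using $a'$ itself as the admissible anchor for the level-passage multitwists), so that the resulting element of $\FMod(S,\phi)$ stabilizes $\beta$ and carries $a'$ to $a$. Without that global argument, your proof does not close.
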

\begin{proof}
We first show that one can further pinch {\em some} admissible curve in $Y_0(\beta)$.
The restriction of a multiscale differential lying over $\barmarkH \cap \cT_{g,n}(\beta)$ is holomorphic on $Y_0(\beta)$. Every holomorphic differential contains an embedded nonsingular cylinder \cite{Masur_existscyl} whose core curve $a'$ is necessarily admissible,
and one can degenerate into $\barmarkH \cap \cT_{g,n}(\beta \cup a')$ by sending the height of this cylinder to $\infty$.
\medskip

Thus, it suffices to show that the stabilizer of $\beta$ in $\FMod(S, \phi)$ acts transitively on the set of admissible curves contained in each component of $Y_0(\beta)$.
Let $a$ and $a'$ be different admissible curves contained in the same component $U$ of $Y_0(\beta)$ and suppose (postcomposing by an element of $\Mod(U)$ as necessary) that the element $h$ taking $\beta$ to $\beta'$ also takes $a$ to $a'$.
The proof of Proposition \ref{prop:2 level trans} then proceeds by upgrading $h$ into a framed mapping class; we show that each step, this can done be done in a way that preserves $a'$ and so the composite element still takes $a$ to $a'$.

\medskip{\noindent \bf Bottom level:} The element $f_{bot}$ differs from $h$ by an element supported entirely on the bottom level $Y_{-1}(\beta)$, so still takes $a$ to $a'$.

\medskip{\noindent \bf Level passage:}
Pick the full-genus subsurface $V \subset U$ to contain $a'$, so $f_{bot}(V)$ contains $a'$.
Each element $f_j$ is constructed by taking a multitwist disjoint from some choice of admissible curve.
So long as we pick $a'$ to be this admissible base curve each time that $U$ is the relevant subsurface in the iteration, then the resulting multitwist $f_j$ will preserve $a'$ and the new subsurface $f_j \cdots f_1 f_{bot}(V)$ will still contain $a'$.
Thus $f_{mid}$ must also take $a$ to $a'$.

\medskip{\noindent \bf Top level:}
The last step of the construction takes a fixed GSB of $V$ to a GSB of $f_{mid}(V)$ with the same winding numbers.
We now just make sure to take $a$ as an element of the GSB of $V$ and take $a'$ to be the corresponding element of the GSB of $f_{mid}(V)$.
The fact that we can extend $a'$ to a GSB with the appropriate winding numbers is immediate in the genus 1 case, as divisoriality implies that $\Arf_1(\phi_V)=0$, so every nonseparating curve is admissible.
If $g(V) \ge 2$, it follows from Corollary \ref{cor:specifytransverse} that we can choose a curve transverse to $a'$ with the desired winding number, then apply Lemma \ref{lem:existsGSB} to the complement in $V$ of the subsurface filled by $a$ and this curve (note that if $g(V)=2$, then the boundaries of this subsurface have winding numbers $-3$ and $+1$, so its genus 1 Arf invariant is either $1$ or $2$ and there are no more restrictions than already appear from fixing the Arf invariant of $V$).
\end{proof}

Combining this with Lemma \ref{lem:CRE/V contain Cadm} allows us to quickly conclude connectivity.

\begin{corollary}\label{cor:variantsconn}
The graphs $\mathscr{C}(\barmarkH)$, $\CRE$, and $\CRV$ are connected.
\end{corollary}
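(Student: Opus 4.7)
The plan is to reduce all three connectivity statements to that of $\Cadm(S,\phi)$, which is connected by Lemma \ref{lem:Cadmconn} and sits inside both $\CRE$ and $\CRV$ by Lemma \ref{lem:CRE/V contain Cadm}. So it suffices to show that every vertex of each of the three graphs can be connected to some admissible curve.

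I will handle $\CRV$ and $\CRE$ first. A vertex of either graph is a divisorial multicurve, i.e., either a single admissible curve (already in $\Cadm$) or a 2-level multicurve $\beta$. For the latter, Fact \ref{fact:toplevel_genus} ensures that each component of $Y_0(\beta)$ has positive genus, so Lemma \ref{lem:genusadm} provides an admissible curve $a \subset Y_0(\beta)$. For $\CRV$, $a$ is disjoint from $\beta$ and hence joined to it by an edge; for $\CRE$, Proposition \ref{prop:realize_adm} gives $\barmarkH \cap \cT_{g,n}(\beta \cup a) \neq \emptyset$, which is the edge condition. In both graphs $\beta$ is thus connected to the vertex $a \in \Cadm(S,\phi)$, and the connectivity of $\Cadm(S,\phi)$ finishes the argument.

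For $\scrC(\barmarkH)$ I will use two observations. First, every $\CRE$-edge translates into a $\scrC(\barmarkH)$-path of length $2$: if $\delta_1$ and $\delta_2$ are joined by an edge in $\CRE$, then $\delta_1 \cup \delta_2$ is a vertex of $\scrC(\barmarkH)$ adjacent (by inclusion) to both $\delta_1$ and $\delta_2$. Combined with the connectivity of $\CRE$ established above, this shows that all divisorial vertices lie in a single component of $\scrC(\barmarkH)$. Second, every vertex $\gamma \in \scrC(\barmarkH)$ admits some divisorial sub-multicurve $\delta \subset \gamma$: the boundary of $\Xi\cH$ is a normal-crossing divisor, so the boundary stratum in which $\gamma$ is pinched is contained in some irreducible boundary divisor, whose undegeneration is divisorial and pinches a sub-multicurve of $\gamma$ (this is exactly the fact invoked in Lemma \ref{lem:Lip from C to E}, guaranteeing that $\xi(\gamma) \neq \emptyset$). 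Since edges in $\scrC(\barmarkH)$ are given by inclusion, $\gamma$ is adjacent to such a $\delta$, completing the reduction.

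There is no serious obstacle here: the heavy lifting was done in Proposition \ref{prop:realize_adm}, Lemma \ref{lem:CRE/V contain Cadm}, and in the construction of $\xi$ in Lemma \ref{lem:Lip from C to E}. The only point to be careful about is ensuring, when passing from $\CRE$-connectivity to $\scrC(\barmarkH)$-connectivity, that every vertex of $\scrC(\barmarkH)$ really does dominate a divisorial vertex by inclusion, which is precisely the ``undegeneration = sub-multicurve'' content recalled above.
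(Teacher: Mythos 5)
Your proposal is correct and follows essentially the same route as the paper: reduce everything to the connectivity of $\Cadm(S,\phi)$ via Lemmas \ref{lem:Cadmconn} and \ref{lem:CRE/V contain Cadm}, and connect each 2-level vertex $\beta$ to an admissible curve in $Y_0(\beta)$ using Fact \ref{fact:toplevel_genus}, Lemma \ref{lem:genusadm}, and Proposition \ref{prop:realize_adm}. The only difference is that you spell out the passage to $\scrC(\barmarkH)$ (subdivided $\CRE$-edges plus the undegeneration fact giving a divisorial sub-multicurve of every vertex), which the paper leaves implicit by citing that the subdivision of $\CRE$ is a coarsely dense subgraph.
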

\begin{proof}
Since $\CRE$ is a subgraph of $\CRV$ (and its subdivision is a subgraph of $\mathscr{C}(\barmarkH)$), it suffices to prove this for $\CRE$.
By Lemma \ref{lem:CRE/V contain Cadm}, $\CRE$ contains the admissible curve graph $\Cadm(S, \phi)$, and as shown in Lemma \ref{lem:Cadmconn}, $\Cadm(S, \phi)$ is connected.
Every vertex of $\CRE$ that is not an admissible curve is a 2-level multicurve $\beta$.
There is some admissible curve $a$ contained inside of each component of $Y_0(\beta)$ (Fact \ref{fact:toplevel_genus} and Lemma \ref{lem:genusadm}) and so Proposition \ref{prop:realize_adm} implies that $\barmarkH$ meets $\cT_{g,n}(\beta \cup a)$.
Thus $\beta$ and $a$ are connected in $\CRE$.
Since we have connected every vertex of $\CRE$ to the connected graph $\Cadm(S, \phi)$, we conclude that $\CRE$ is connected.
\end{proof}

Pushing this line of reasoning slightly further, we also get the following:

\begin{corollary}\label{cor:variants_qi}
The graphs $\mathscr{C}(\barmarkH)$, $\CRE$, and $\CRV$ are quasi-isometric.
\end{corollary}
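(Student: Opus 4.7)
The plan is to establish that all three graphs are pairwise quasi-isometric by proving that the two natural inclusions $\iota_1: \CRE \hookrightarrow \CRV$ and $\iota_2: \CRE \hookrightarrow \mathscr{C}(\barmarkH)$ are both quasi-isometries. Both maps are coarsely Lipschitz for essentially formal reasons: an $\CRE$-edge between $\gamma$ and $\delta$ requires $\barmarkH \cap \cT_{g,n}(\gamma \cup \delta) \neq \emptyset$, so $\gamma$ and $\delta$ are disjoint (giving a $\CRV$-edge) and lie together in $\gamma \cup \delta \in \mathscr{C}(\barmarkH)$ (giving a $\mathscr{C}$-path of length $2$). They are also coarsely dense: $\iota_1$ is the identity on vertices, and every $\mu \in \mathscr{C}(\barmarkH)$ has a divisorial undegeneration at $\mathscr{C}$-distance $1$. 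So it suffices to verify each inclusion is a quasi-isometric embedding.

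For $\iota_2$ I would extract the following fact that is implicit in the proof of Lemma \ref{lem:Lip from C to E}: if $\mu \in \mathscr{C}(\barmarkH)$ and $\gamma_1, \gamma_2 \subseteq \mu$ are divisorial sub-multicurves of $\mu$, then $\gamma_1 \sim_{\CRE} \gamma_2$. Indeed, each $\gamma_i$ corresponds to a boundary divisor of $\Xi\cH$ containing the stratum over $\mu$; since $\partial \Xi\cH$ is a normal-crossing divisor, these two divisors intersect, and their intersection is the stratum at which $\gamma_1 \cup \gamma_2$ is pinched, so $\barmarkH$ meets $\cT_{g,n}(\gamma_1 \cup \gamma_2)$. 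Given a $\mathscr{C}(\barmarkH)$-path $\gamma = \mu_0, \mu_1, \ldots, \mu_n = \delta$ connecting two vertices of $\CRE$, choose divisorial undegenerations $\tilde{\mu}_i \subseteq \mu_i$ via the map $\xi$ from Lemma \ref{lem:Lip from C to E}, with $\tilde{\mu}_0 = \gamma$ and $\tilde{\mu}_n = \delta$. Since consecutive $\mu_i, \mu_{i+1}$ satisfy one inclusion, the corresponding $\tilde{\mu}_i, \tilde{\mu}_{i+1}$ are both sub-multicurves of whichever of the two is larger, so they are $\CRE$-adjacent by the fact above. This yields $d_\CRE(\gamma, \delta) \leq n$, so $\iota_2$ is a quasi-isometric embedding.

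For $\iota_1$, I would bound $d_\CRE(\gamma, \delta)$ uniformly whenever $\gamma, \delta$ are disjoint divisorial multicurves by case analysis using Proposition \ref{prop:realize_adm}. If both are admissible, Lemma \ref{lem:CRE/V contain Cadm} gives $\CRE$-adjacency. If $\gamma$ is admissible and $\delta$ is a $2$-level multicurve, then either $\gamma \subseteq Y_0(\delta)$, giving $\gamma \sim_\CRE \delta$ via Proposition \ref{prop:realize_adm}, or $\gamma \subseteq Y_{-1}(\delta)$, in which case Fact \ref{fact:toplevel_genus} and Lemma \ref{lem:genusadm} produce an admissible $a \subseteq Y_0(\delta)$ which is $\CRE$-adjacent to $\delta$ (by Proposition \ref{prop:realize_adm}) and to $\gamma$ (as disjoint admissibles), giving $d_\CRE \leq 2$. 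When both are $2$-level, I would seek an admissible $a \subseteq Y_0(\gamma)$ disjoint from $\delta$; such an $a$ exists inside any component of $Y_0(\gamma) \setminus \delta$ of positive genus, and then $\gamma \sim_\CRE a$ (Proposition \ref{prop:realize_adm}) while the previous subcase connects $a$ to $\delta$ in at most two $\CRE$-steps, yielding $d_\CRE(\gamma, \delta) \leq 3$.

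The main obstacle is handling the $2$-level/$2$-level subcase when no component of $Y_0(\gamma) \setminus \delta$ carries positive genus, which can happen when $\gamma \cup \delta$ absorbs all of the genus of $Y_0(\gamma)$ (for instance, when $\gamma \cup \delta$ is close to a pants decomposition). In this degenerate case I would use homological coherence (Lemma \ref{lemma:HJ}.\ref{item:HC}) together with the holomorphic type of $\phi$ to argue that at least one of the other regions $Y_{-1}(\gamma) \cap Y_0(\delta)$ or $Y_0(\gamma) \cap Y_0(\delta)$ must still contain positive genus and hence an admissible curve, from which the routing argument can be completed; alternatively, I would invoke the undegeneration fact from the proof of $\iota_2$ to reroute through any common pinchable extension of $\gamma$ and $\delta$ when one exists.
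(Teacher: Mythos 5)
Your plan to prove both inclusions $\iota_1 : \CRE \hookrightarrow \CRV$ and $\iota_2 : \CRE \hookrightarrow \mathscr{C}(\barmarkH)$ are quasi-isometries is reasonable, and your treatment of $\iota_1$ is essentially the paper's argument: the coarse inverse $\zeta : \CRV \to \CRE$ is built by bounding $d_{\CRE}(\gamma, \delta)$ for disjoint divisorial multicurves via the same routing through admissible curves (Lemma~\ref{lem:CRE/V contain Cadm}, Proposition~\ref{prop:realize_adm}). Your worry about the $2$-level/$2$-level subcase is the right thing to flag; the clean resolution is that every component of $Y_0(\gamma)\cap Y_0(\delta)$ has boundary consisting only of curves of $\gamma$, $\delta$, and peripheral loops, all with winding number $\le -1$, so homological coherence forces positive genus exactly as in Fact~\ref{fact:toplevel_genus}. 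Thus either this intersection is nonempty (pick an admissible curve there, distance $2$) or $Y_0(\gamma) \subseteq Y_{-1}(\delta)$ and $Y_0(\delta)\subseteq Y_{-1}(\gamma)$, in which case disjoint admissibles $a_\gamma\subset Y_0(\gamma)$ and $a_\delta\subset Y_0(\delta)$ give a path of length $3$.

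The genuine gap is in your direct attack on $\iota_2$. Your argument hinges on the ``fact'' that whenever $\gamma_1,\gamma_2 \subseteq \mu$ are divisorial sub-multicurves of a vertex $\mu \in \mathscr{C}(\barmarkH)$, they are $\CRE$-adjacent, which you justify by saying each $\gamma_i$ corresponds to a boundary divisor of $\Xi\cH$ containing the stratum over $\mu$ and then invoking normal crossings. But being a divisorial sub-multicurve of $\mu$ does not mean $\gamma_i$ corresponds to a boundary divisor that contains the relevant stratum of $\Xi\cH$ over $\cT_{g,n}(\mu)$. Boundary strata of $\Xi\cH$ are labeled by \emph{enhanced level graphs with prong matchings}, not by multicurves, and several distinct strata of $\Xi\cH$ can map to a single $\cT_{g,n}(\gamma_i)$; the divisor making $\gamma_i$ divisorial and the divisors cutting out the stratum over $\mu$ may be unrelated. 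This is exactly the subtlety the paper's footnote to Lemma~\ref{lem:Lip from C to E} warns against: ``Which undegenerations occur, and which boundary divisors intersect, depend not just on the multicurve but also on the level structure.'' Even routing through the $\xi$ map as you propose does not repair this, because when $\mu_i \subset \mu_{i+1}$, a chosen $\tilde\mu_i \in \xi(\mu_i)$ is an undegeneration of some stratum over $\mu_i$, not necessarily of the stratum over $\mu_{i+1}$ that produced $\tilde\mu_{i+1}$.

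The paper avoids the $\iota_2$ analysis altogether. It observes that the already-constructed coarsely Lipschitz maps $\CRE \to \mathscr{C}(\barmarkH)$ (inclusion of the subdivision) and $\mathscr{C}(\barmarkH) \to \CRV$ (the map $\xi$) compose to something coarsely agreeing with the inclusion $\iota_1$. Once $\zeta : \CRV \to \CRE$ is shown Lipschitz (your $\iota_1$ work), the three graphs are pairwise quasi-isometric by chasing around the cycle $\CRE \to \mathscr{C}(\barmarkH) \to \CRV \to \CRE$; in particular $\zeta\circ\xi$ is a coarse inverse to $\iota_2$ for free. So the work you did on $\iota_1$ suffices; the separate $\iota_2$ argument is both unnecessary and, as written, unsound.
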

\begin{proof}
Observe that via the inclusion (of its subdivision) and Lemma \ref{lem:Lip from C to E}, we have already built coarsely Lipschitz maps
\[\CRE \to \mathscr{C}(\barmarkH) \to \CRV\] such that the final map $\CRE \to \CRV$ coarsely agrees with the inclusion.
Thus, it remains to show that the inclusion $\CRE \hookrightarrow \CRV$ has a coarse inverse. 

Define $\zeta\colon \CRV \to 2^{\CRE}$ to be the identity on the vertices of $\CRV$ and to send each edge of $\CRV$ to the pair of vertices in $\CRE$ that are its endpoints. If $\zeta$ is coarsely Lipschitz, then it is necessarily a coarse inverse of the inclusion $\CRE \hookrightarrow \CRV$.

To that end, consider any edge of $\CRV$ connecting disjoint multicurves $\gamma$ and $\delta$ that correspond to divisorial boundary strata. We just need to show a bound on the length of a path from $\gamma$ to $\delta$ in $\CRE$.
If both $\gamma$ and $\delta$ are single admissible curves, then Lemma \ref{lem:CRE/V contain Cadm} implies they are connected in $\CRE$.
Now suppose $\gamma$ is admissible and $\delta$ is a 2-level curve. As in the proof of Corollary \ref{cor:variantsconn}, 
each component of $Y_0(\delta)$ has genus and if $\gamma \subset Y_0(\delta)$ then Proposition \ref{prop:realize_adm} implies that $\gamma$ and $\delta$ are connected by an edge of $\CRE$.
Otherwise, $\gamma \subset Y_{-1}(\delta)$ and in particular it is disjoint from $Y_0(\delta)$.
By Lemma \ref{lem:genusadm}, there is an admissible curve $a$ on $Y_0(\delta)$. Applying Proposition \ref{prop:realize_adm} again we see that $\gamma$ is connected to $a$ which is connected to $\delta$ (by Lemma \ref{lem:CRE/V contain Cadm}).

Finally, suppose that both $\gamma$ and $\delta$ are 2-level curves.
If any components of $Y_0(\gamma)$ and $Y_0(\delta)$ are nested, then since they both have genus their intersection does. In particular by Lemma \ref{lem:genusadm} there is some admissible curve $a$ disjoint from both $\gamma$ and $\delta$. We may then invoke Proposition \ref{prop:realize_adm} again to connect $\gamma$ to $\delta$ in $\CRE$ through $a$.
Otherwise, $Y_0(\gamma)$ and $Y_0(\delta)$ are disjoint. In this case we choose admissible curves $a_\gamma$ and $a_\delta$ inside $Y_0(\gamma)$ and $Y_0(\delta)$, respectively.
The previous two paragraphs then imply that $(\gamma, a_\gamma, a_\delta,\delta)$ is an edge path in $\CRE$. Thus, collecting cases, we have shown that each edge of $\CRV$ is sent to a set of diameter at most $4$, hence $\zeta$ is coarsely Lipschitz.
\end{proof}

\subsection{A quasi-isometry with the model}\label{subsec:geom_CRECRV}
We now build off our work showing $\Cadm(S, \phi)$ is hierarchically hyperbolic to prove that $\CRV$ (hence $\CRE$ and $\mathscr{C}(\barmarkH)$) are as well.

As in the case of the admissible curve graph, we establish the hierarchical hyperbolicity of $\CRV$ by showing it is quasi-isometric to a model graph constructed from its witnesses.
Because $\wit(\CRV)$ is a proper subset of $\wit(\Cadm(S, \phi))$, our proof for $\CRV$ will actually rely on the proof for $\Cadm(S, \phi)$. To describe this setup, we need the following notation:

\begin{itemize}
    \item Set $\xi= \xi(S)$, the cardinality of the largest set of disjoint curves on $S = S_{g,n}$.
    \item Let $\mathcal{D}$ be the set of divisorial 2-level splittings for $\barmarkH$ (these are exactly the vertices of $\CRV$ that are not in $\Cadm(S, \phi)$).
    \item Let $\fS = \wit(\Cadm(S, \phi))$ and $\ofS = \wit(\CRV)$. By definition,
\[\overline{\fS} =  \fS \setminus \{ W \in \fS : \exists \delta \in \cD \text{ with } \delta \cap W = \emptyset\}.\]
\item Let $\cK = \cK_\fS$ denote the quasi-isometric model for $\Cadm(S, \phi)$ (Definition \ref{definition: ksep}) and let $\overline{\cK}$ denote $\cK_{\ofS}$. 
\end{itemize}

By construction, there are 1-Lipschitz inclusion maps 
\[i \colon \cK \to \ocK \text{ and } \iota\colon \Cadm(S, \phi) \to \CRV.\]
The idea behind our proof that $\ocK$ is quasi-isometric to $\CRV$ is to show that the decreases in distances that happen under $i \colon \cK \to \ocK $ coarsely match the decreases that happen under $\iota\colon \Cadm(S, \phi) \to \CRV$. 

To formalize this idea, we define $$ P(\mu) = \{\alpha \in \cK : \mu \subseteq \alpha\}$$ for any multicurve $\mu$ on $S$.
If $\mu$ is a multicurve such that $S \setminus \mu$ does not contain a subsurface in $\fS$, then $\mu$ is a vertex of $\cK$ and every vertex of $P(\mu)$ is connected to $\mu$ by a path with at most $\xi$ edges (corresponding to removing curves until only $\mu$ is left).
On the other hand, if $S\setminus \mu$ does contain a subsurface in $\fS$, then $P(\mu)$ has infinite diameter; see \cite[Corollary 4.10]{russell_vokes_sep}. 
Hence, if $\mu$ is a vertex of $\ocK$, but not $\cK$, then $P(\mu)$ is an infinite diameter subset of $\cK$ that becomes finite diameter under $i \colon \cK \to \ocK$. If $\ocK$ is to be quasi-isometric to $\CRV$, we would like the image of $P(\mu)$ under the quasi-isometry $\cK \to \Cadm(S, \phi)$ to have uniformly bounded diameter under $\iota \colon \Cadm(S, \phi) \to \CRV$.

Our candidate quasi-isometry $\Theta \colon \ocK 
 \to 2^{\CRV}$ is therefore 
 \[\Theta(\mu) = \iota \circ \Pi \circ \Psi(P(\mu))\]
 where $\Pi \circ \Psi$ is the quasi-isometry from $\cK$ to $\Cadm(S, \phi)$ constructed in Section \ref{sec:Cadm_qi_model}.= 
 
 As suggested above, the main work required to prove $\Theta$ is a quasi-isometry is to show that $\iota \circ \Pi \circ \Psi(P(\mu))$ is a bounded diameter subset of $\CRV$. To achieve this, we need to know that $\CRV$ is obtained from $\Cadm(S, \phi)$ by adding enough divisorial 2-level  multicurves to collapse the image of $P(\mu)$ for each $\mu$ that is in $\ocK$ but not $\cK$. The abundance of 2-level multicurves comes from the following lemma. 

\begin{lemma}\label{lem:lots_of_2-level_curves}
For any $\markH$, there is an $N$, depending only on $S$, such that the following holds. 
Let $W$ be a genus $0$ witness of $\Cadm(S, \phi)$
and let $\beta \in \cD$ be a divisorial 2-level multicurve disjoint from $W$.
Then for any multicurve $\alpha$ on $S \setminus W$, there is an $f \in \FMod(S, \phi)$ such that $f(\beta)$ remains disjoint from $W$ and
$i(\alpha, f(\beta)) \le N.$
\end{lemma}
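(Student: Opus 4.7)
The plan is to construct a ``standard'' representative $\beta_0$ in the $\FMod(S,\phi)$--orbit of $\beta$ that is disjoint from $W$ and has small intersection with $\alpha$, then invoke Proposition \ref{prop:2 level trans} to produce the desired $f$. Since the lemma only requires that $f(\beta) = \beta_0$ (we do not need $f$ to fix $W$ or $\alpha$), every bound can be built into the choice of $\beta_0$.

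First, the combinatorial data attached to the triple $(W, \alpha, \beta)$ is of bounded complexity. Each of $W$, $\alpha$, and $\beta$ has at most $\xi(S)$ components, and the winding numbers of components of $\beta$ are bounded uniformly: each $\phi(b) \leq -1$ for $b \in \beta$ by the 2-level splitting condition, and homological coherence (Lemma \ref{lemma:HJ}.\ref{item:HC}) applied to complementary components of $\beta$ (which have bounded Euler characteristic) forces a matching upper bound on each $|\phi(b)|$. Together with the finiteness of possible Arf (or $\Arf_1$) invariants of complementary components, this shows that up to the action of $\Mod(S)$ there are only finitely many orbit types of triples $(W, \alpha, \beta')$ where $\beta'$ has the same topological-framing-Arf type as $\beta$.

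Next, I exhibit a standard representative $\beta_0$ for each such type. Applying the classical change-of-coordinates principle inside $S\setminus W$, I pick a multicurve of the given topological type disjoint from $W$ (which exists since $\beta$ itself is such a representative) and minimize its geometric intersection with $\alpha$. Because the configuration $(W, \alpha, \beta_0)$ has complexity bounded by $\xi(S)$, this minimum is itself bounded by a constant depending only on $S$. I then modify the winding numbers and Arf invariants of $\beta_0$ to match those of $\beta$ using Lemma \ref{lem:adjustwinding}: by Lemma \ref{lem:admwitnesses}, the complement of a genus-0 witness has full genus inside $W$, which therefore contains a subsurface $T$ of genus at least 2 that can be chosen disjoint from $\beta_0$ and $\alpha$. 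The prescribed surgeries/twists are localized within $T \subset W$, so they leave the intersection with $\alpha$ and disjointness from $W$ untouched. Taking $N$ to be the maximum of $i(\alpha, \beta_0)$ over the finitely many orbit types gives a uniform bound.

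With $\beta$ and this $\beta_0$ now sharing topological type, component-wise winding numbers, and Arf invariants of complementary components, Proposition \ref{prop:2 level trans} produces an element $f \in \FMod(S,\phi)$ with $f(\beta) = \beta_0$. Then $f(\beta)$ is disjoint from $W$ and satisfies $i(\alpha, f(\beta)) = i(\alpha, \beta_0) \leq N$, as required.

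The main obstacle is the construction of $\beta_0$: one must simultaneously control topological type, exact winding-number and Arf data, disjointness from $W$, and intersection with $\alpha$. The decisive observation making this possible is that $W$, being the complement of a genus-0 witness in a surface of genus $\geq 3$, contains a full-genus subsurface $T$ in which all framed adjustments of Lemma \ref{lem:adjustwinding} can be localized away from both $\alpha$ and the ``skeleton'' of $\beta_0$ that determines its topological type. This separation of roles — $W$ absorbing framing modifications, $S \setminus W$ carrying the geometric positioning relative to $\alpha$ — is what allows the uniform bound $N = N(\xi(S))$ to exist.
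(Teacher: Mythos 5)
There is a genuine gap in the step that claims to adjust the winding numbers and Arf data of $\beta_0$. You assert that $W$ contains a subsurface $T$ of genus at least $2$ to which Lemma~\ref{lem:adjustwinding} can be applied. But the hypothesis of the lemma is that $W$ is a \emph{genus $0$} witness, so $W$ contains no subsurface of positive genus at all; moreover, since $W$ is a witness, Lemma~\ref{lem:admwitnesses} forces every component of $S\setminus W$ to have genus $0$ as well. Thus no subsurface of genus $\geq 2$ (or genus $1$) exists either in $W$ or in $S\setminus W$, so Lemma~\ref{lem:adjustwinding} cannot be invoked anywhere in this configuration. In fact the intermediate claim ``the complement of a genus-0 witness has full genus inside $W$'' is not a consequence of Lemma~\ref{lem:admwitnesses} and is false.

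Beyond that broken step, the plan solves the wrong subproblem. Because every component of $S\setminus W$ has genus $0$, the winding number of a curve on $S\setminus W$ is determined by how it partitions $\partial W$ together with the punctures; so any $h\in\Mod(S)$ supported on $S\setminus W$ that carries $\beta$ to a multicurve of the same topological type in $S\setminus W$ automatically preserves all the winding numbers, and no winding-number adjustment is required. What is \emph{not} automatic, and what Proposition~\ref{prop:2 level trans} requires, is matching $\Arf(\phi|_{Y_0})$ on the top level $Y_0$, which has genus $\geq 2$. Lemma~\ref{lem:adjustwinding} is a tool for changing the winding number of a single curve, not for changing the Arf invariant of a complementary subsurface, and your proof offers no mechanism for the latter. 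The paper handles this by first producing $h$ via classical change-of-coordinates on $S\setminus W$, observing winding numbers are preserved for free, and then---in the residual case where $\Arf(h Y_0)\neq\Arf(Y_0)$---locating a non-separating boundary curve $w\subset\partial W$ of even winding number (which must exist, else $\Arf(Y_0)=0$ automatically) and twisting along a curve $d'\subset S\setminus W$ with odd winding number, odd algebraic intersection with the dual of $w$, and controlled geometric intersection with $\alpha$. This Dehn-twist repair of the Arf invariant is the crux of the argument and is absent from your proof.
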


Note that in particular, $f(\beta) \in \cD$ since $\cD$ is a union of $\FMod(S, \phi)$ orbits.

This is a weaker, framed version of the following standard ``change of coordinates'' lemma.

\begin{lemma}\label{lem:bdd int Mod orbit}
For any surface $Z$, there is an $N_Z$ such that for any multicurves or multiarcs $\alpha$ and $\beta$, there is a $g\in \Mod(Z)$ such that
$i(\alpha, g(\beta)) \le N_Z$.
\end{lemma}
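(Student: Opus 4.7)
The plan is to invoke the classical change-of-coordinates principle for $\Mod(Z)$: the content of the lemma is entirely that the mapping class group has only finitely many orbits on multicurves and on multiarcs, so that intersection numbers between normalized representatives are uniformly bounded.

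First I would verify that $\Mod(Z)$ acts with finitely many orbits on the set of multicurves \emph{and} on the set of multiarcs. Any multicurve on $Z$ has at most $\xi(Z)$ components, so the topological type of the complement (the genera and boundary/puncture distributions of its pieces, together with the combinatorics of how the pieces are glued) takes only finitely many values; by the classification of surfaces, the number of $\Mod(Z)$-orbits of multicurves is bounded by this finite combinatorial data. The analogous argument for multiarcs works verbatim --- here one also has to track which punctures the endpoints of each arc meet, but since $Z$ has finitely many punctures this only multiplies the count by a finite factor. Let $\{\mu_1, \ldots, \mu_k\}$ be a fixed choice of representatives for all these orbits (covering both multicurves and multiarcs).

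Now define
\[N_Z := \max_{1 \le i, j \le k} i(\mu_i, \mu_j),\]
which is finite because only finitely many pairs appear. Given arbitrary $\alpha$ and $\beta$ (each a multicurve or multiarc), choose $h_\alpha, h_\beta \in \Mod(Z)$ and indices $i, j$ with $h_\alpha(\alpha) = \mu_i$ and $h_\beta(\beta) = \mu_j$, and set $g := h_\alpha^{-1} h_\beta$. Since geometric intersection number is $\Mod(Z)$-invariant,
\[i(\alpha, g(\beta)) = i\bigl(h_\alpha(\alpha), h_\alpha g(\beta)\bigr) = i(\mu_i, \mu_j) \le N_Z,\]
completing the proof.

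There is essentially no obstacle: the only step with any substance is the orbit-counting, and that is a completely standard consequence of the change-of-coordinates principle (the same principle already invoked in the proof of Lemma~\ref{lem:int_bounds_dist_genus_sep} and throughout Section~\ref{sec:Cadm_qi_model}). Everything else is formal manipulation using $\Mod(Z)$-invariance of $i(\cdot, \cdot)$.
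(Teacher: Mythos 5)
Your proof is correct and is exactly the standard change-of-coordinates argument the paper has in mind: the lemma is stated without proof as a ``standard'' fact, and the same finitely-many-orbits-plus-invariance-of-$i(\cdot,\cdot)$ reasoning already appears verbatim in the paper's proof of Lemma~\ref{lem:int_bounds_dist_genus_sep}. Nothing to add.
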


As in Section \ref{sec:Cadm_qi_model}, the reason that we cannot use a similar ``change of coordinates'' argument (even though we have shown the set of divisorial 2-level splittings for $\barmarkH$ is a finite union of $\FMod(S, \phi)$ orbits) is that there are infinitely many $\FMod(S, \phi)$ orbits of witnesses. Instead of making a finite number of arbitrary choices, we will instead need to be more clever and make a infinite number of good ones.

\begin{proof}[Proof of Lemma \ref{lem:lots_of_2-level_curves}]
We first record a number of topological consequences of our hypotheses. Let $Y_0$ and $Y_{-1}$ denote the two levels of the 2-level splitting associated to $\beta$. Then since $W$ is a witness and $(S, \phi)$ is of holomorphic type, we see that
\begin{itemize}
\item $W \subset Y_0$,
\item each component of $Y_{-1}$ has genus 0,
\item $Y_0$ is connected, and
\item $Y_0$ has genus at least $2$.
\end{itemize}
Indeed, contradicting any of the first three statements immediately implies that there is an admissible curve disjoint from $W$ (Lemma \ref{lem:genusadm}).
The last assertion follows similarly: $Y_0$ always has positive genus, and some curve of $\partial W$ is always non-separating on $Y_0$. If the genus of $Y_0$ were equal to $1$ then $\Arf_1(Y_0)=0$ must be 0 by divisoriality (see the discussion right after Definition \ref{def:div}) and hence some boundary curve of our witness $W$ would be admissible, a contradiction.

We observe that since $S \setminus W$ has genus $0$, the winding number of any curve on $S\setminus W$ is determined by how it partitions the curves of $\partial W$ and the punctures of $S$.
Thus, any mapping class supported entirely on $W$ necessarily preserves the winding numbers of the curves of $\beta$.
Applying Lemma \ref{lem:bdd int Mod orbit} (and using the inclusion homomorphism for subsurfaces \cite[Theorem 3.18]{FarbMarg}), we can therefore find some $h(\beta) \in \Mod(S) \cdot \beta$ that has bounded intersection with $\alpha$ and with the same winding numbers as $\beta$.
Note that $h(\beta)$ is a 2-level splitting, as the definition of 2-level splitting depends only on winding numbers. Note also that $h(\beta)$ may not be divisorial.

In the case that $Y_0$ is not of spin type, 
or if $\Arf(hY_0)$ happens to equal $\Arf(Y_0)$, then Proposition \ref{prop:2 level trans} ensures that $\beta$ and $h(\beta)$ are in the same $\FMod(S, \phi)$ orbit, completing the proof.

Otherwise, $\Arf(hY_0) = \Arf(Y_0)+1$.
Our goal is now to amend the Arf invariant of $hY_0$ while introducing a uniformly bounded number of intersections with $\alpha$.
We note first that the topology of the situation at hand forces there to be a curve $w$ of $\partial W$ that is nonseparating on $Y_0$ (hence on $hY_0$) and such that $\phi(w)$ is even.
Indeed, if the winding numbers of all such $w$ were odd, then since $Y_0 \setminus W$ has genus 0 this would imply that $\phi$ is odd on curves spanning a Lagrangian subspace of $H_1(Y_0; \ZZ)$.
In particular, this would imply that each term in the formula for the Arf invariant \eqref{eqn:arfdef} would be $0$, hence $\Arf(Y_0) = 0$. The same would therefore also be true for $\Arf(hY_0)$, but this is a contradiction.

Now every mapping class supported entirely on $W$ can be written as a product of Dehn twists, and every curve on $S \setminus W$ is separating.
Thus, for any curve $c$ on $S$ and any curve $d \subset S \setminus W$, we have that 
\[\hat\iota (hc, d) = \hat\iota (c, d)\]
where $\hat \iota (\cdot, \cdot)$ denotes the algebraic intersection number.
Combining this with twist-linearity, we see that if we factorize
$h = T_{d_1}^{k_1} \cdots T_{d_n}^{k_n}$ where $d_i$ are curves on $S \setminus W$, then
\[\phi(hc) = 
\phi(c) + 
k_1 \hat\iota (c, d_1) \phi(d_1)
+ \ldots +
k_n \hat\iota (c, d_n) \phi(d_n)\]
for any curve $c$ on $S$.

Returning to the situation at hand, since $\Arf(hY_0) = \Arf(Y_0) +1$, the discussion above implies there must be some curve $c \subset Y_0$, part of a GSB for $Y_0$ and symplectically dual to a curve $w \subset \partial W$ of even winding number, and some curve $d \subset S \setminus W$ such that
$\hat\iota (c, d)$ and $\phi(d)$ are both odd.
Moreover, $\hat\iota (hc, d)$ is also odd, and since algebraic intersection number and winding number properties on a genus 0 surface depend only on how a curve separates the surface, Lemma \ref{lem:bdd int Mod orbit} ensures there is a $d'$ on $S \setminus W$ such that
\begin{enumerate}
    \item $\hat\iota (gc, d')$ is odd
    \item $\phi(d')$ is odd
    \item The geometric intersection number of $d'$ with $\alpha$ is uniformly bounded.
\end{enumerate}
Comparing with Formula \eqref{eqn:arfdef}, items (1) and (2) ensure that $T_{d'}hY_0$ has the same Arf invariant (and boundary winding numbers) as $Y_0$, hence Proposition \ref{prop:2 level trans} implies that $\beta$ and $T_{d'}h\beta$ are in the same $\FMod(S, \phi)$ orbit. Since the geometric intersection of $h\beta$ and $\alpha$ was uniformly bounded, item (3) ensures that the geometric intersection of $T_{d'}h\beta$ and $\alpha$ is as well.
\end{proof}

We can now prove $\ocK$ is quasi-isometric to $\CRV$; thus $\CRV$ is hierarchically hyperbolic.

\begin{theorem}\label{thm:bdrycx model qi}
    The map $\Theta \colon \overline{\cK} \to 2^{\CRV}$ is a quasi-isometry. 
\end{theorem}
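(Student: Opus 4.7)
The plan is to verify that $\Theta$ is a quasi-isometry by showing the passage from $\cK$ to $\overline{\cK}$ (which cones off product regions associated to non-shared witnesses $W \in \fS \setminus \overline{\fS}$) coarsely matches the passage from $\Cadm(S, \phi)$ to $\CRV$ (which cones off admissible curves through disjointness edges with divisorial splittings $\beta \in \cD$). Concretely, I would verify three things: $\Theta$ is coarsely well-defined, coarsely Lipschitz, and admits a coarsely Lipschitz inverse.

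For coarse well-definedness, the case $\mu \in \cK$ is routine: every vertex of $P(\mu)$ differs from $\mu$ by at most $\xi$ added curves, so $P(\mu)$ has $\cK$-diameter at most $2\xi$, and Proposition \ref{prop:comp_coarseLip} together with the 1-Lipschitzness of $\iota$ controls the image. The main case is $\mu \in \overline{\cK} \setminus \cK$, where some component $W$ of $S \setminus \mu$ lies in $\fS \setminus \overline{\fS}$ and admits a disjoint divisorial $\beta_0 \in \cD$ by definition of $\overline{\fS}$. I would show $\Theta(\mu)$ lies in a bounded $\CRV$-neighborhood of $\beta_0$. Given $\alpha \in P(\mu)$ and $a \in \Pi \circ \Psi(\alpha)$, Lemma \ref{lem:lots_of_2-level_curves} (applied using the multiarc $a \cap (S \setminus W)$, which is allowable since its proof passes through the multiarc-compatible Lemma \ref{lem:bdd int Mod orbit}) produces $f \in \FMod(S, \phi)$ with $f(\beta_0) \in \cD$ still disjoint from $W$ and $i(a, f(\beta_0)) \leq N$. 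A surgery argument, leveraging the positive genus of $Y_0(f(\beta_0))$ from Fact \ref{fact:toplevel_genus} to locate admissible curves disjoint from $f(\beta_0)$ and Lemma \ref{lem:adjustwinding} to correct winding numbers after rerouting pieces of $a$, then produces an admissible $a'$ at bounded $\Cadm$-distance from $a$ and disjoint from $f(\beta_0)$. A short $\CRV$-bridge finally connects $f(\beta_0)$ back to $\beta_0$, completing the case.

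For coarse Lipschitzness, consider an edge $\mu_1 \sim \mu_2$ in $\overline{\cK}$ (either add/remove or flip). When both $\mu_i$ lie in $\cK$, the argument essentially reduces to Proposition \ref{prop:comp_coarseLip}. When at least one $\mu_i \in \overline{\cK} \setminus \cK$, the associated witnesses $W_i$ are compatible (related by nesting or through the flipped/added curve), and either a common $\beta_0$ works for both $\mu_i$ or two $\beta_0$'s can be linked by a short $\CRV$-path using Lemma \ref{lem:lots_of_2-level_curves}.

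For the quasi-inverse, every vertex of $\CRV$ (admissible or divisorial 2-level) is also a vertex of $\overline{\cK}$, giving a natural map $\overline{\Theta} \colon \CRV \to \overline{\cK}$ by the identity on vertices. This is coarsely Lipschitz: for disjoint adjacent $v, w$ in $\CRV$, the union $v \cup w$ is a vertex of $\overline{\cK}$. Indeed, any component $Z$ of $S \setminus (v \cup w)$ is disjoint from $v$ or $w$; when one of them is divisorial this directly places $Z \notin \overline{\fS}$, and when both are admissible then $\partial Z$ carries an admissible curve, so $Z \notin \fS \supseteq \overline{\fS}$ by Lemma \ref{lem:admwitnesses}. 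A path of uniformly bounded length in $\overline{\cK}$ between $v$ and $w$ can then be realized via add/remove moves (with length controlled by the complexity of $S$). The compositions $\Theta \circ \overline{\Theta}$ and $\overline{\Theta} \circ \Theta$ are coarsely identity by combining the bounded-image analysis with Proposition \ref{prop:comp_coarseLip}.

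The main obstacle is the surgery sub-claim used in the bounded-image argument: translating the bound $i(a, f(\beta_0)) \leq N$ into a bound on $d_{\CRV}(a, f(\beta_0))$. The surgery must simultaneously reduce intersections, maintain admissibility (non-separating with winding number zero), and control $\Cadm$-distance; this balancing act is where Lemma \ref{lem:adjustwinding} plays its starring role, correcting the winding numbers that arise when rerouting $a$ into a genus-bearing component of $Y_0(f(\beta_0))$.
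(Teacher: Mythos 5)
Your overall architecture (bounded images of vertices, bounded images of edges, coarse inverse via the vertex-set inclusion) matches the paper's, and your Step 3 is essentially correct. But there is a genuine gap in the central step, the bounded-diameter claim for $\Theta(\mu)$ when $\mu \in \ocK \setminus \cK$. You apply Lemma \ref{lem:lots_of_2-level_curves} to (a multiarc extracted from) the admissible curve $a \in \Pi\circ\Psi(\alpha)$, so the resulting divisorial multicurve $f(\beta_0)$ depends on $\alpha \in P(\mu)$. To conclude that all of $\Theta(\mu)$ lies near the fixed $\beta_0$, you then assert that ``a short $\CRV$-bridge connects $f(\beta_0)$ back to $\beta_0$,'' and you invoke the same assertion again in your edge case. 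This is unjustified and false in general: two divisorial $2$-level multicurves that are both disjoint from the witness $W$ need not be close in $\CRV$ (take $f$ to be a high power of a framed mapping class supported off $W$; it acts by isometries on $\CRV$ and can move $\beta_0$ arbitrarily far). Since $P(\mu)$ has infinite diameter in $\cK$, the family $\{f(\beta_0)\}$ ranges over an a priori unbounded set, and nothing anchors it.

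The paper's proof is structured precisely to avoid this: Lemma \ref{lem:lots_of_2-level_curves} is applied to the multicurve $\mu$ itself (which lies on $S \setminus W$ and is \emph{fixed}), producing a single $\delta \in \cD$ with $i(\mu,\delta) \le N$ and $\delta$ disjoint from $W$; then for each $\alpha \in P(\mu)$ one builds an auxiliary vertex $\alpha_W \cup m \cup \partial W \cup \delta$ of $\cK$ at uniformly bounded $\cK$-distance from $\mu \cup \alpha_W$, and Lemma \ref{lem:sep_curve_int_multicurve} guarantees $\Psi$ of this vertex contains a genus-separating curve inside the genus component $Z$ of $S\setminus\delta$, hence $\theta$ of it contains an admissible curve disjoint from the \emph{same} $\delta$ for every $\alpha$. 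To repair your argument you would need to make your anchor independent of $\alpha$ in this way; the surgery sub-claim you flag as the main obstacle is actually secondary (the paper sidesteps it entirely). Separately, you do not treat the case where $S \setminus W$ contains a second witness component $V \in \fS$, where both $\alpha \cap W$ and $\alpha \cap V$ vary and a further argument (the paper's Case 3, using the overlap of $P(\mu\cup\alpha_W)$ and $P(\mu\cup\beta_V)$) is required.
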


\begin{proof}
Throughout the proof, we say a quantity is uniform if it depends only on the surface $S$. 
Set $\mathscr{E} := \CRV$ and let $\theta = \iota \circ \Pi \circ \Psi$, so $\Theta(\mu) = \theta(P(\mu))$.

Our proof has three steps. First we prove that $\diam(\Theta(\mu))$ is uniformly bounded for each vertex $\mu \in \overline{\cK}$. Then we show that if $\mu$ and $\nu$ are joined by an edge of $\overline{\cK}$ then $\diam(\Theta(\mu) \cup \Theta (\nu))$ is also uniformly bounded. Together these show that $\Theta$ is coarsely Lipschitz.
Finally, we check that $\Theta$ is a coarse inverse to the inclusion map $\mathscr{E} \to \overline{\cK}$.

\medskip
{\noindent \bf Step 1: vertices have uniform diameter.} 
      If no component of $S \setminus \mu$ is an element of $\fS$, then $\mu \in \cK$ and every vertex of $P(\mu)$ is obtained by adding fewer than $\xi$ curves to $\mu$. Hence $\diam(P(\mu)) \leq 2 \xi$. Since $\theta =\iota \circ \Pi \circ \Psi$ is coarsely Lipschitz, this implies $\theta(P(\mu))=\Theta(\mu)$ is uniformly bounded.      
      Hence, we can assume there exist a component $W$ of $S \setminus \mu$ that is in $\fS$, i.e., is a witness for $\Cadm$.

      For our fixed $\mu \in \overline{\cK}$, let $\alpha \in P(\mu)$. The multicurve $\alpha$ is the union of three distinct sets: $\mu$, $\alpha_W = \alpha \cap W$, and $ \alpha \setminus (\mu \cup \alpha_W)$; see Figure \ref{fig:alpha_W}. Set $\alpha' := \mu \cup \alpha_W$.
      We divide the remainder of our proof into three cases based on the subsurface $W$. 

      \begin{figure}[h]
      \centering
        \def\svgheight{3cm}
\begingroup%
  \makeatletter%
  \providecommand\color[2][]{%
    \errmessage{(Inkscape) Color is used for the text in Inkscape, but the package 'color.sty' is not loaded}%
    \renewcommand\color[2][]{}%
  }%
  \providecommand\transparent[1]{%
    \errmessage{(Inkscape) Transparency is used (non-zero) for the text in Inkscape, but the package 'transparent.sty' is not loaded}%
    \renewcommand\transparent[1]{}%
  }%
  \providecommand\rotatebox[2]{#2}%
  \newcommand*\fsize{\dimexpr\f@size pt\relax}%
  \newcommand*\lineheight[1]{\fontsize{\fsize}{#1\fsize}\selectfont}%
  \ifx\svgwidth\undefined%
    \setlength{\unitlength}{416.51649049bp}%
    \ifx\svgscale\undefined%
      \relax%
    \else%
      \setlength{\unitlength}{\unitlength * \real{\svgscale}}%
    \fi%
  \else%
    \setlength{\unitlength}{\svgwidth}%
  \fi%
  \global\let\svgwidth\undefined%
  \global\let\svgscale\undefined%
  \makeatother%
  \begin{picture}(1,0.15948812)%
    \lineheight{1}%
    \setlength\tabcolsep{0pt}%
    \put(0,0){\includegraphics[width=\unitlength,page=1]{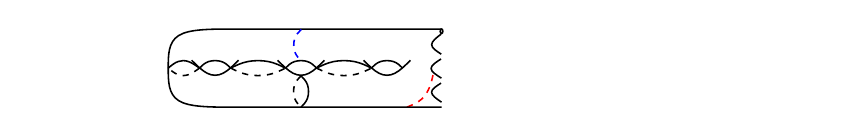}}%
    \put(0.22870162,0.13609118){\color[rgb]{0,0,0}\transparent{0.30000001}\makebox(0,0)[lt]{\lineheight{1.25}\smash{\begin{tabular}[t]{l}$W$\end{tabular}}}}%
    \put(0.35916747,0.04854411){\makebox(0,0)[lt]{\lineheight{1.25}\smash{\begin{tabular}[t]{l}$\mu$\end{tabular}}}}%
    \put(0,0){\includegraphics[width=\unitlength,page=2]{alpha_W_svg-tex.pdf}}%
    \put(0.42512923,0.00679463){\color[rgb]{1,0,0}\makebox(0,0)[lt]{\lineheight{1.25}\smash{\begin{tabular}[t]{l}$\alpha \setminus (\mu \cup \alpha_W)$\end{tabular}}}}%
    \put(0.33985561,0.13759899){\color[rgb]{0,0,1}\makebox(0,0)[lt]{\lineheight{1.25}\smash{\begin{tabular}[t]{l}$\alpha_W$\end{tabular}}}}%
    \put(0,0){\includegraphics[width=\unitlength,page=3]{alpha_W_svg-tex.pdf}}%
    \put(-0.00239148,0.00693596){\color[rgb]{1,1,1}\makebox(0,0)[lt]{\lineheight{1.25}\smash{\begin{tabular}[t]{l}$\alpha \setminus (\mu \cup \alpha_W)$\end{tabular}}}}%
  \end{picture}%
\endgroup%

          \caption{The partition of $\alpha \in P(\mu)$ into $\alpha_W$, $\mu$ and $\alpha \setminus (\mu \cup \alpha_W)$.}
          \label{fig:alpha_W}
      \end{figure}

      \emph{Case 1: $g(W) \geq 1$.}
      Since $\mu \in \overline{\cK}$, then by definition of $\overline{\cK}$ there must exist some 2-level splitting $\delta \in \mathcal{D}$ such that $W$ is disjoint from $\delta$.
      Since $W$ has genus, no component of $S \setminus W$ can be in $\fS$, thus $\alpha'$ is a vertex of $\cK$ that is joined by a path of length at most $\xi$ to $\alpha$.  
      By Lemma \ref{lem:sep_curve_int_multicurve}, there exists a curve $c \subset W$ that cuts off a genus 1 subsurface and such that $c \in \Psi(\alpha')$.
      Thus, there is an admissible curve $a \in \Pi \circ \Psi(\alpha')$ contained in the subsurface $W$. Since $a$ is disjoint from $\delta$, there is an edge of $\mathscr{E}$ from  a vertex of $ \theta(\alpha')$ to $\delta$. Since $\theta$ is coarsely Lipschitz, this implies $\theta(\alpha)$ is uniformly close to $\delta$ for all $\alpha \in P(\mu)$. This shows $\Theta(\mu)$ is uniformly bounded in this case.
    \medskip

      \emph{Case 2: $g(W) =0$,     
      and none of the components of $S \setminus W$  are in $\fS$.}   This implies that $\alpha'$ is a vertex of $P(\mu)$. Moreover, $\alpha$ can be connected to $ \alpha'$ with at most $\xi$ edges of $\cK$ (one for each curve removed to go from $\alpha$ to $\alpha'$).
      
      Since $W \in \fS$ but not in $\overline{\fS}$, there exists a multicurve in $\mathcal{D}$ that is disjoint from $W$. Each component of $S \setminus W$ has genus zero by Lemma \ref{lem:admwitnesses}. Thus, we can apply Lemma \ref{lem:lots_of_2-level_curves} to find some $\delta \in \mathcal{D}$ and $N>0$ depending only on $S$ such that $i(\mu, \delta) \leq N$ and $\delta$ is disjoint from $W$; see Figure \ref{fig:delta_and_m}. Note that this choice depends only on $\mu$, not on $\alpha \in P(\mu)$.

   Since $\delta$ is a 2-level splitting, $S \setminus \delta $ has a component $Z \subset Y_0(\delta)$ with $g(Z) \geq 1$. Since $Z$ contains an admissible curve and $W \in \fS$, we must have $W \subseteq Z$. 
    By Lemma \ref{lem:bdd int Mod orbit}, there is a uniform $N'>0$ and a (possibly empty) multicurve $m$ on $Z \setminus W$ such that $m$ cuts $Z \setminus W$ into three-holed spheres and $i(m,\mu) \leq N'$; see Figure \ref{fig:delta_and_m}. Since $m$ cuts $Z \setminus W$ into three-holed spheres,  $\alpha_W \cup m \cup \partial W \cup \delta$ is a vertex of $\cK$ that intersects $\alpha'$ at most $N+N'$ times; see Figure \ref{fig:delta_and_m} for a schematic of the situation. Let $\delta' =  m \cup \partial W \cup \delta$, so $i(\alpha', \alpha_W\cup \delta') \leq N+N'$.

    Thus $d_\cK(\alpha',\alpha_W \cup \delta')$ is bounded uniformly by some number determined by $N+N'$.
    \begin{figure}[h]
        \centering
        \def\svgwidth{0.9\linewidth}
        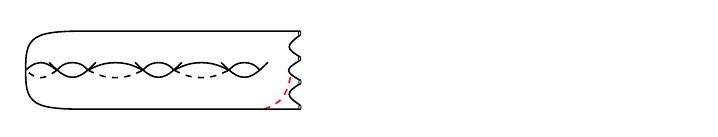
        \caption{A schematic of the curves $\delta$ and $m$ relative to $\mu$ and $W$. The actual intersection number between $\delta \cup m$ and $\mu$ is possibly higher, but still uniformly bounded.}
        \label{fig:delta_and_m}
    \end{figure}

    As in the previous case, Lemma \ref{lem:sep_curve_int_multicurve} says $\Psi(\alpha_W \cup \delta')$ will contain a curve $c \subset Z$ that cuts off a genus 1 subsurface of $Z$.
    Hence $\Psi(\alpha_W \cup \delta')$ will contain an admissible curve that is disjoint from $\delta$. This means  $\theta(\alpha_W \cup \delta')$ is a bounded diameter set that contains a vertex that is adjacent to $\delta$ in $\mathscr{E}$.
Thus, since $\alpha$ is uniformly close to $\alpha'$ which is in turn uniformly close to $\alpha_W \cup \delta'$ and $\theta$ is coarsely Lipschitz, we conclude that $\theta(\alpha)$ is uniformly close to $\delta$.
Since $\delta$ depended only on $\mu$, this implies $\diam(\Theta(\mu))$ is uniformly bounded.
    \medskip

    \emph{Case 3: $g(W) =0$ and $S \setminus W$ has a component $V$ that is in $\fS$.} By  Lemma \ref{lem:admwitnesses}, there is only one such component $V$. Let $\beta$ be a second vertex of $P(\mu)$ alongside $\alpha$.
    Recall $\alpha_W = \alpha \cap W$ and similarly define $\alpha_V$, $\beta_W$, and $\beta_V$. 
    Note that $\mu \cup \alpha_W$ and $\mu \cup \beta_V$ are vertices of $\overline{\cK}$ that we have already shown in the previous cases to have bounded diameter image under $\Theta$.
    Now observe 
\[\alpha \in P(\mu \cup \alpha_W), \,
\alpha_W \cup \mu \cup \beta_V \in P(\mu \cup \alpha_W) \cap P(\mu \cup \beta_v),
\text{ and } \beta \in P(\mu \cup \beta_V).\]
Hence $\theta(\alpha)$ is uniformly close to $\theta(\alpha_W \cup \mu \cup \beta_V)$, which is in turn uniformly close to $ \theta(\beta_W \cup \mu \cup \beta_V)$. Thus $\Theta(\mu)$ is uniformly bounded.

\medskip
{\noindent \bf Step 2: edges have uniform diameter.}
If $\nu$ is obtained from $\mu$ by adding a curve, then $P(\nu)\subseteq P(\mu)$.
Hence $\Theta(\mu) \cup \Theta(\nu) = \Theta(\mu)$ which has bounded diameter by Step 1.

Otherwise, the edge from $\mu$ to $\nu$ corresponds to a flip move.
Let $x \in \mu$ and $x'\in \nu$ be such that $x$ is flipped to $x'$ and let $Y$ be the component of $S \setminus (\mu \setminus x)$ containing $x$ and $x'$.
If $\xi(Y) >1$, then  $i(x,y) = 0$ and $\mu \cup x'$ is a vertex of $\overline{\cK}$.
Now $\mu \cup x'$ is joined by a ``remove'' edge to both $\mu$ and $\nu$ (removing $x'$ gives $\mu$ and removing $x$ gives $\nu$), so the diameter bound follows from the add/remove edge case.
If $\xi(Y) = 1$, then $x$ and $x'$ intersect minimally on $Y$. We can therefore find two pants decompositions $\alpha \in P(\mu)$ and $\alpha' \in P(\nu)$ such that $\alpha$ differs from $\alpha'$ by flipping $x$ to  $x'$.
Since $\alpha$ and $\alpha'$ are joined by an edge in $\cK$, the sets $\theta(\alpha)$ and   $\theta(\alpha')$ are uniformly close in $\mathscr{E}$. Since $\alpha \in P(\mu)$ and $\alpha' \in P(\nu)$, this implies that $\Theta(\mu) \cup \Theta(\nu)$ has uniformly bounded diameter.

\medskip
{\noindent \bf Step 3: $\Theta$ is a coarse inverse of the inclusion.} Let $j \colon \mathscr{E} \to \overline{\cK}$ be the 2-Lipschitz inclusion map.
Let $\mu \in j(\mathscr{E})$, that is, $\mu$ is either an admissible curve or $\mu \in \mathcal{D}$.
If $\mu$ is admissible, then $\mu$ is a vertex of $P(\mu)$ and $\Pi \circ \Psi (\mu)$ contains the admissible curve $\mu$ (as in the proof of Proposition \ref{prop:comp_coarseLip}), so $\mu \in \Theta(\mu)$.
If $\mu \in \mathcal{D}$, then there is some admissible curve $a$ contained in $Y_0(\mu)$ that is in particular disjoint from $\mu$.
Since $a \cup \mu$ and $\mu$ are joined by an edge of $\overline{\cK}$, 
this means $\Theta(a\cup \mu)$ and $\Theta(\mu)$ are uniformly close in $\mathscr{E}$ by Step 2. Now $a \in \Theta(a \cup \mu)$ because it is an admissible curve.
Thus $\Theta(\mu)$ is uniformly close to $a$, which is joined to $\mu$ by an edge in $\CRV$. 
\end{proof}

\begin{proof}[Proof of Theorem \ref{mainthm:bdrycx}]
By Corollary \ref{cor:variants_qi}, $\CRV$ is quasi-isometric to $\CRE$ and $\mathscr{C}(\barmarkH)$, and by Theorem \ref{thm:bdrycx model qi}, it is quasi-isometric to the hierarchically hyperbolic model $\overline \cK$.
Thus all of these graphs are hierarchically hyperbolic.

The proof that $\mathscr{C}(\barmarkH)$ has a pair of disjoint witnesses is a slight variation of the one appearing in the proof of Theorem \ref{mainthm:CadmHHS}.
Since the restriction of $\phi$ to the top level of any 2-level splitting must be of holomorphic type, the winding numbers of each curve of $\beta$ must be negative, and by homological coherence are all bounded by $\chi(S)$.

Now consider a multicurve $\alpha$ separating $S$ into a pair of genus $0$ subsurfaces $W^{\pm}$ such that $W^+$ contains no punctures and lies to the left of each curve of $\alpha$.
In order for $W^+$ to contain either an admissible curve or a curve of a 2-level splitting then there must be some subset $\mathcal C$ of the curves of $\alpha$ such that
\[1 - |\mathcal C| \le \sum_{c\in \mathcal C} \phi(c) \le 1-  |\mathcal C| - \chi(S).\]
Thus, by choosing an $\alpha$ with large enough winding numbers such that no subset sums of the winding numbers of its curves are in this small range, we see that there can be no admissible curves {\em or} 2-level splittings contained in $W^+$ and hence $W^-$ is a witness. A similar argument implies that for sufficiently large choices of the winding numbers of $\alpha$, the subsurface $W^+$ will also be a witness, and so $\ocK$ cannot be Gromov hyperbolic.
\end{proof}

\bibliographystyle{amsalpha}

\bibliography{bib}
\end{document}